\newcommand{\angles}[1]{\langle #1 \rangle}
\newcommand{\half}{\frac{1}{2}}
\newcommand{\abs}[1]{\vert #1 \vert}
\newcommand{\norm}[1]{\left\Vert #1 \right\Vert}
\newcommand{\R}{\mathbb{R}}
\begin{document} 
\newtheorem{prop}{Proposition}[section]
\newtheorem{Def}{Definition}[section]
\newtheorem{theorem}{Theorem}[section]
\newtheorem{lemma}{Lemma}[section]
 \newtheorem{Cor}{Corollary}[section]

\title[LWP for Yang-Mills-Dirac]{\bf Local well-posedness of the coupled Yang-Mills and Dirac system for low regularity data}
\author[Hartmut Pecher]{
{\bf Hartmut Pecher}\\
Fakult\"at f\"ur  Mathematik und Naturwissenschaften\\
Bergische Universit\"at Wuppertal\\
Gau{\ss}str.  20\\
42119 Wuppertal\\
Germany\\
e-mail {\tt pecher@uni-wuppertal.de}}
\date{}

\begin{abstract}
We consider the classical Yang-Mills system coupled with a Dirac equation in 3+1 dimensions. Using that most of the nonlinear terms fulfill a null condition we prove local well-posedness for data with minimal regularity assumptions. This problem for smooth data was solved forty years ago by Y. Choquet-Bruhat and D. Christodoulou. Our result generalizes a similar result for the Yang-Mills equation by S. Selberg and A. Tesfahun.  
\end{abstract}
\maketitle
\renewcommand{\thefootnote}{\fnsymbol{footnote}}
\footnotetext{\hspace{-1.5em}{\it 2020 Mathematics Subject Classification:} 
35Q40, 35L70 \\
{\it Key words and phrases:} Yang-Mills,  Dirac equation,
local well-posedness, Lorenz gauge}
\normalsize 
\setcounter{section}{0}

\section{Introduction and the main theorem}

\noindent 
Let $\mathcal{G}$ be the Lie group $SU(n,\mathbb{C})$ (the group of unitary matrices of determinant 1) and $g$ its Lie algebra $su(n,\mathbb{C})$ (the algebra of trace-free skew hermitian matrices) with Lie bracket $[X,Y] = XY-YX$ (the matrix commutator). 
For given  $A_{\alpha}: \mathbb{R}^{1+3} \rightarrow g $ we define the curvature $F=F[A]$ by
\begin{equation}
\label{curv}
 F_{\mu \nu} = \partial_{\mu} A_{\nu} - \partial_{\nu} A_{\mu} + [A_{\mu},A_{\nu}] \, , 
\end{equation}
where $\mu,\nu \in \{0,1,2,3\}$ and $D_{\mu} = \partial_{\mu} + [A_{\mu}, \cdot \,]$ .

Then the Yang-Mills system is given by
\begin{equation}
\label{0}
D^{\mu} F_{\mu \nu}  = 0
\end{equation}
in Minkowski space $\mathbb{R}^{1+3} = \mathbb{R}_t \times \mathbb{R}^3_x$ , with metric $diag(-1,1,1,1)$. Greek indices run over $\{0,1,2,3\}$, Latin indices over $\{1,2,3\}$, and the usual summation convention is used.  
We use the notation $\partial_{\mu} = \frac{\partial}{\partial x_{\mu}}$, where we write $(x^0,x^1,x^2,x^3)=(t,x^1,x^2,x^3)$ and also $\partial_0 = \partial_t$.

This system is coupled with a Dirac spinor field $\psi: \R^{1+3} \to \mathbb{C}^4$ . Let $T_a$ be the set of generators
 of $SU(n,\mathbb{C})$ and $A_{\mu} = A^a_{\mu} T_a$ , $F_{\mu \nu} = F^a_{\mu \nu} T_a$ , $[T^\lambda,T^b]_a =: f^{ab \lambda}$ .

For the following considerations and also for the physical background we refer to the monograph by Matthew D. Schwartz \cite{Sz} . We also refer to the pioneering work for the Yang-Mills, Higgs and spinor field equations by Y. Choquet-Bruhat and D.  Christodoulou \cite{CC} , and G. Schwarz and J. Sniatycki \cite{SS}.

The kinetic Lagrangian with $N$ Dirac fermions and the Yang-Mills Lagrangian are given by
$\mathcal{L} = \sum_{j=1}^N \bar{\psi}_j  (i \gamma^{\mu} \partial_{\mu}-m)\psi_j $ and $\mathcal{L}_{YM} = - \frac{1}{4} (F^a_{\mu \nu})^2$ , respectively. Here $\bar{\psi} = \psi^{\dagger} \gamma^0$ , where $\psi^{\dagger}$ is the complex conjugate transpose of $\psi$ .

Here $\gamma^{\mu}$ are the (4x4) Dirac matrices given by
$ \, \, \gamma^0 = \left( \begin{array}{cc}
I & 0  \\ 
0 & -I  \end{array} \right)\, \,$ 
 , $\, \,  \gamma^j = \left( \begin{array}{cc}
0 & \sigma^j  \\
-\sigma^j & 0  \end{array} \right) \, \, $ , where  $\, \, \sigma^1 = \left( \begin{array}{cc}
0 & 1  \\
1 & 0  \end{array} \right)$ ,
$ \sigma^2 = \left( \begin{array}{cc}
0 & -i  \\
i & 0  \end{array} \right)$ ,
$ \sigma^3 = \left( \begin{array}{cc}
1 & 0  \\
0 & -1  \end{array} \right)$ .
Then we consider the following Lagrangian for the (minimally)  coupled system
\begin{align*}
 \mathcal{L}&=- \frac{1}{4} (F^a_{\mu \nu})^2 + \sum_{i,j=1}^N \bar{\psi}_i (\delta_{ij} i \gamma^{\mu} \partial_{\mu}+\gamma^{\mu}A^a_{\mu} T^a_{ij}     -m \delta_{ij})\psi_j \\
& =-\frac{1}{4}(\partial_{\mu} A^a_{\nu}-\partial_{\nu} A^a_{\mu} + f^{abc} A^b_{\mu} A^c_{\nu})^2 +\sum_{i,j=1}^N \bar{\psi}_i (\delta_{ij} i \gamma^{\mu} \partial_{\mu}+\gamma^{\mu}A^a_{\mu} T^a_{ij}     -m \delta_{ij})\psi_j \, . 
\end{align*}
Here $T^a_{ij} \in \mathbb{C}$ are the entries of the matrix $T^a$ . 

The corresponding equations of motion are given by the following coupled Yang-Mills-Dirac system (YMD)
\begin{align*} 
 \partial^{\mu} F^a_{\mu \nu} + f^{abc} A^b_{\mu} F^c_{\mu \nu} &= - \langle \psi_i,\gamma_0 \gamma_{\nu} T^a_{ij} \psi_j \rangle \\
(i \gamma^{\mu} \partial_{\mu} -m) \psi_i & = - A^a_{\mu} \gamma^{\mu} T^a_{ij} \psi_j \, .
\end{align*}
Using $D^{\mu} F_{\mu \nu} = \partial^{\mu} F_{\mu \nu} + [A^{\mu},F_{\mu \nu}]$ and 
$$ [A^{\mu},F_{\mu \nu}]_a = [A^{\lambda}_{\mu} T_{\lambda},F^b_{\mu \nu} T_b]_a = A^{\lambda}_{\mu} F^b_{\mu \nu} [T_{\lambda},T_b]_a = A^b_{\mu} F^b_{\mu \nu} f_{ab\lambda} $$
we obtain the following system which we intend to treat:
\begin{align}
\label{0.1}
D^{\mu} F_{\mu \nu} & = - \langle \psi^i,\alpha_{\nu} T^a_{ij} \psi^j \rangle T_a \, , \\
\label{0.2}
i \alpha^{\mu} \partial_{\mu} \psi_i & = -A^a_{\mu} \alpha^{\mu} T^a_{ij} \psi_j \, ,
\end{align}
if we choose $m=0$ just for simplicity and define the matrices $\alpha^{\mu} = \gamma^0 \gamma^{\mu}$ , so that $\, \,\alpha^0 = I_{4x4}$ and $ \alpha^j = \left( \begin{array}{cc}
0 & \sigma^j  \\
\sigma^j & 0  \end{array} \right)$.
$\alpha^{\mu}$ are hermitian matrices with $(\alpha^{\mu})^2 = I_{4x4}$ , $\alpha^j \alpha^k + \alpha^k \alpha^j = 0$ for $j \neq k$ .

Following \cite{AFS1} and \cite{HO} in order to  rewrite the Dirac equation we define the projections
$$\Pi(\xi) := \half(I_{4x4} + \frac{\xi_j \alpha^j}{|\xi|})$$  and $\Pi_{\pm}(\xi):= \Pi(\pm \xi)$ , so that $\Pi_{\pm}(\xi)^2 = \Pi_{\pm}(\xi)$ , $\Pi_+(\xi) \Pi_-(\xi) =0 $ , $\Pi_+ (\xi) + \Pi_-(\xi) = I_{4x4}$ , $\Pi_{\pm}(\xi) = \Pi_{\mp}(-\xi) $ .
We obtain 
\begin{equation}
\label{2.6'}
\alpha^j \Pi(\xi) = \Pi(\pm \xi) \alpha^j + \frac{\xi_j}{|\xi|} I_{4x4} \, .
\end{equation}
Using the notation $\Pi_{\pm} = \Pi_{\pm}(\frac{\nabla}{i})$ we obtain
\begin{equation}
\label{2.8}
 -i\alpha^j \partial_j = |\nabla|\Pi_+ - |\nabla|\Pi_- \, , 
\end{equation}
where $|\nabla|$ has symbol $|\xi| $ . Moreover defining the modified Riesz transform by $R^j_{\pm} = \mp(\frac{\partial_j}{i|\nabla|}) $ with symbol $ \mp \frac{\xi_j}{|\xi|}$ and $R^0_{\pm} = -1$ the identity (\ref{2.6'}) implies
\begin{equation}
\label{2.7}
\alpha^j \Pi_{\pm} = (\alpha^j \Pi_{\pm})\Pi_{\pm} = \Pi_{\mp} \alpha^j \Pi_{\pm}- R^j_{\pm} \Pi_{\pm} \, , \, \alpha^0 \Pi_{\pm}= \Pi_{\pm} = \Pi_{\mp} \alpha^0 \Pi{\pm} - R^0_{\pm} \Pi_{\pm} \, .
\end{equation}
If we define $\psi_{i,\pm} = \Pi_{\pm} \psi_i$ we obtain by applying the projection $\Pi_{\pm}$ and (\ref{2.8}) the Dirac type equation in the form
\begin{equation}
\label{0.3}
(i \partial_t \pm |\nabla|)\psi_{i,\pm} = \Pi_{\pm}(A^a_{\mu} \alpha^{\mu} T^a_{ij} \psi^j) = : H_{i,\pm}(A,\psi) \, .
\end{equation}

The Yang-Mills equation (\ref{0.1}) may be written as
$$\square  A_{\nu} = \partial_{\nu} \partial^{\mu} A_{\mu}-[\partial^{\mu} A_{\mu},A_{\nu}] - [A_{\mu},\partial^{\mu} A_{\nu}] - [A^{\mu},F_{\mu \nu}] - \langle \psi_i,\alpha_{\nu} T^a_{ij} \psi^j \rangle T_a \, . $$

From now on we impose the Lorenz gauge condition
$$\partial_{\mu} A^{\mu} = 0 \, . $$
This implies the wave equation
\begin{align}
\nonumber
\square  A_{\nu} &= - 2[A^\mu,\partial_\mu A_\beta] + [A^\mu,\partial_\nu A_\mu] - 
  [A^\mu, [A_\mu,A_\nu]]- \langle \psi^i,\alpha_{\nu} T^a_{ij} \psi^j \rangle T_a \\
\label{0.4}
&=: K_{\nu}(A,F) + J_{\nu}(\psi) \, .
\end{align}

In order to derive a wave equation for $F_{\beta \gamma}$ we follow the arguments of \cite{ST}. Applying $D_{\mu}$ to the Bianchi identity
$$ D_{\mu} F_{\beta \gamma} + D_{\mu} F_{\gamma \mu} + D_{\gamma} F_{\mu \beta}= 0$$
and using the relations
$$D^{\mu} D_{\beta} F_{\gamma \mu} = D_{\beta} D^{\mu} F_{\gamma \mu} + [F^{\mu \beta},F_{\gamma \mu}] $$
and
$$D^{\mu} D_{\gamma} F_{\mu \beta} = D_{\gamma} D^{\mu} F_{\mu \beta} + [F^{\mu \gamma},F_{\mu \beta}] $$
we obtain
$$ D^{\mu} D_{\mu} F_{\beta \gamma} + D_{\beta} D^{\mu} F_{\gamma \mu} + [F^{\mu \beta},F_{\gamma \mu}] + D_{\gamma} D^{\mu} F_{\mu \beta} + [F^{\mu \gamma},F_{\mu \beta}] = 0 \ . $$
But by the Yang-Mills equations we obtain
$$ D^{\mu} F_{\gamma \mu}= - D^{\mu} F_{\mu \gamma} = - J_{\gamma}(\psi) \, \Rightarrow D_{\beta} D^{\mu} F_{\gamma \mu} = - D_{\beta} J_{\gamma}(\psi) $$
and
$$ D^{\mu} F_{\mu \beta}=  J_{\beta}(\psi) \, \Rightarrow D_{\gamma} D^{\mu} F_{\mu \beta} = - D_{\gamma} J_{\beta}(\psi) \, ,$$
which implies
\begin{equation}
\label{*0}
D^{\mu} D_{\mu} F_{\beta \gamma} - D_{\beta} J_{\gamma}(\psi) + D_{\gamma} J_{\beta}(\psi) + 2 [F^{\mu \beta},F_{\gamma \mu}] = 0  \, . 
\end{equation}
Now we use the identity (cf. \cite{ST})
$$ D^{\mu} D_{\mu} X = \square X + [A^{\mu},\partial_{\mu} X ] + \partial^{\mu} [A_{\mu},X] +  [A^{\mu},[A_{\mu},X]] $$ and obtain the wave equation
\begin{align}
\label{0.5}
\nonumber
\square F_{\beta \gamma}  =& (-[A^{\alpha},\partial_{\alpha} F_{\beta \gamma}] - \partial^{\alpha} [A_{\alpha},F_{\beta \gamma}] - [A^{\alpha},[A_{\alpha},F_{\beta \gamma}]] - 2 [F^{\alpha \beta},F_{\gamma \alpha}]) \\
\nonumber
& - D_{\beta} J_{\gamma}(\psi) +  D_{\gamma} J_{\beta}(\psi) \\
 =:& \, L_{\beta \gamma} (A,F) + I_{\beta \gamma}(A,\psi) \, .
\end{align}

Expanding the second and fourth terms in \eqref{0.5}, and also imposing the Lorenz gauge, yields

\begin{equation}\label{YMF2}
\begin{split}
       \square F_{\beta\gamma}&= - 2[A^\alpha,\partial_\alpha F_{\beta\gamma}]
      + 2[\partial_\gamma A^\alpha, \partial_\alpha A_\beta]
      - 2[\partial_\beta A^\alpha, \partial_\alpha A_\gamma]
      \\
      &\quad + 2[\partial^\alpha A_\beta , \partial_\alpha A_\gamma]
                 + 2[\partial_\beta A^\alpha, \partial_\gamma A_\alpha] - [A^\alpha,[A_\alpha,F_{\beta\gamma}]] 
                 \\
                 &\quad  +2[F_{\alpha\beta},[A^\alpha,A_\gamma]]- 2[F_{\alpha\gamma},[A^\alpha,A_\beta]]
                      - 2[[A^\alpha,A_\beta],[A_\alpha,A_\gamma]] \\
 &\quad - D_{\beta} J_{\gamma}(\psi) +  D_{\gamma} J_{\beta}(\psi) \\
&  =: \, L_{\beta \gamma} (A,F) + I_{\beta \gamma}(A,\psi) \, .
                         \end{split}    
\end{equation}

This implies the following equivalent system which we consider from now on:
\begin{align}
\label{1.1a}
(i \partial_t \pm | \nabla |  ) \psi_{i,\pm} & = H_{i,\pm}(A,\psi)\, , \\
\label{1.1b}
\square A_{\nu} & =  K_{\nu}(A,F) + J_{\nu}(\psi) \, , \\
\label{1.1c}
\square F_{\mu \nu} & = L_{\mu \nu}(A,F) + I_{\mu \nu}(A,\psi)\, .
\end{align}

We want to solve the system (\ref{1.1a}),(\ref{1.1b}),(\ref{1.1c}) simultaneously for $A$ , $F$ and $\psi_{\pm}$ .
So to pose the Cauchy problem for this system, we consider initial data for $(A,F,\psi)$ at $t=0$:
\begin{equation}\label{Data}
\begin{split}
&A_{\nu}(0) = a_0^{\nu}, \, (\partial_t A_{\nu})(0) =  a_1^{\nu},
        \,
    F_{\mu \nu}(0) =f_0^{\nu}, \, (\partial_t F_{\mu \nu})(0) =  f_1^{\mu \nu}, \\ & \, \psi_{i,\pm}(0) = \psi_{i,\pm}^0 =  \Pi_{\pm} \psi_0^i .
\end{split}
 \end{equation}

In fact, the initial data for $F$ can be determined from $(a_0^{\nu}, a_1^{\nu}, \psi_0^i)$ as follows:
\begin{align}
\label{f1}
  f_0^{ij} &= \partial_i a_0^j - \partial_j a_0^i + [a_0^i,a_0^j],
  \\
\label{f2}
  f_0^{0i} &=  a_1^i - \partial_i a_0^0 + [a_0^0,a_0^i],
\\ \label{f3}
   f_1^{ij} &= \partial_i  a_1^j - \partial_j  a_1^i + [ a_1^i,a_0^j]+[ a_0^i, a_1^j],
  \\ \label{f4}
 f_1^{0j} &= \partial^k f_0^{kj} +[a_0^{\mu}, f_0^{\mu j}] + \langle \psi_0^i,  \alpha_j T^a_{ij} \psi_0^j \rangle T_a \, ,
\end{align}
where the first three expressions come from \eqref{curv} whereas 
the last one comes from (\ref{0.1}) with $\nu=j$.

Note that the Lorenz gauge condition $\partial^\alpha A_\alpha=0$ and (\ref{0.2}) with $\nu=0$ impose the constraints 
\begin{equation}\label{Const}
 a_1^0= \partial^i a_0^i,
\quad
   \partial^k f_0^{k0} + [a_0^k, f_0^{k0}] = - \langle \psi_0^i,T^a_{ij} \psi_0^j \rangle T_a \, .
\end{equation}

Our main theorem reads as follows:
\begin{theorem}
\label{Theorem1.1}
Assume that $s$ , $r$ and $l$ satisfy the following conditions:
\begin{align*}
&s > \frac{3}{4} \, ,  \quad r >-\frac{1}{8} \, , \quad l > \frac{3}{8} \\
& s \ge l \ge r \quad , \quad
2l-s > 0 \quad , \quad 2r-s > -1  \, ,\\
& 2l-r >  \half \quad , \quad 3l-2r > \frac{3}{4} \quad , \quad 2s-r > \frac{3}{2}  \quad , \quad 3s-2r > \frac{7}{4} \, . \, 
\end{align*}
Given initial data $a^{\nu}_0 \in H^s$ , $a^{\nu}_1 \in H^{s-1}$ , $f^{\mu \nu}_0 \in H^r$ , $f^{\mu \nu}_1 \in H^{r-1}$ , $\psi_{i,\pm}^0 \in H^l$, which fulfill (\ref{f1})-(\ref{Const}), there exists a time $T > 0$ , depending on the norms of the data, such that the Cauchy problem  (\ref{Data}) for  the Yang-Mills-Dirac system (\ref{0.2}),(\ref{0.4}),(\ref{0.5}) in Lorenz gauge has a unique solution $$A \in  F^s_T \quad , \quad F \in G^r_T \quad , \quad
  \psi \in X^{l,\frac{3}{4}+}_+[0,T] + X^{l,\frac{3}{4}+}_-[0,T]$$  (these spaces are defined in Def. \ref{Def.1.2}). It has the regularity
$$ A \in C^0([0,T],H^s) \cap C^1([0,T],H^{s-1}) \quad , \quad F \in C^0([0,T],H^r) \cap C^1([0,T],H^{r-1}) \, , $$
$$ \psi \in C^0([0,T],H^l) \, . $$
The solution depends continously on the data and higher regularity persists.
\end{theorem}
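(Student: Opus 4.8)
The plan is to solve the reduced system \eqref{1.1a}--\eqref{1.1c} by a contraction mapping argument in Bourgain-type spaces adapted to the half-wave operators $i\partial_t\pm|\nabla|$ (for the spinor) and to $\square$ (for $A$ and $F$), i.e.\ in $F^s_T$, $G^r_T$ and $X^{l,\frac34+}_+[0,T]+X^{l,\frac34+}_-[0,T]$ of Definition~\ref{Def.1.2}. First I would record the linear theory: the free evolutions with data of the prescribed regularity lie in these spaces, and the Duhamel operators for $\square$ and $i\partial_t\pm|\nabla|$ gain a power $T^\varepsilon$ after restriction to $[0,T]$, mapping $X^{\cdot,b-1}\to X^{\cdot,b}$ at the $b$-exponents of Definition~\ref{Def.1.2} (the larger value $\frac34+$ for the spinor being forced by the worst quadratic interactions). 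Then existence and uniqueness follow once each nonlinearity $H_{i,\pm}$, $K_\nu+J_\nu$, $L_{\mu\nu}+I_{\mu\nu}$ is bounded multilinearly in $(A,F,\psi)$ into the dual spaces $X^{l,b-1}_\pm$, $X^{s-1,b-1}$, $X^{r-1,b-1}$ respectively.

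\textbf{The multilinear estimates.} These form the bulk of the work and are done term by term, using null structure wherever present. For $H_{i,\pm}(A,\psi)=\Pi_\pm(A^a_\mu\alpha^\mu T^a\psi)$ one uses \eqref{2.6'}--\eqref{2.7} to expose the null structure between $\Pi_\pm$ and the wave $A$, reducing to the standard Dirac--wave bilinear null-form estimates in $X^{s,b}$-spaces. For the Yang-Mills self-interactions $K_\nu(A,F)$ and the quadratic part of $L_{\mu\nu}(A,F)$ one follows \cite{ST}: after imposing the Lorenz gauge and Hodge-decomposing $A$ into its divergence-free and curl-free parts, the dangerous quadratic terms (including those of the form $[A^\alpha,\partial_\alpha(\cdot)]$) reduce to genuine null forms of $Q_{\alpha\beta}$-type, while the cubic and quartic terms $[A,[A,F]]$, $[F,[A,A]]$, $[[A,A],[A,A]]$ are non-resonant and handled by plain product estimates. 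For the spinor current $J_\nu(\psi)=-\angles{\psi^i,\alpha_\nu T^a\psi^j}T_a$ one splits $\psi=\psi_++\psi_-$ and uses \eqref{2.7}: the same-sign pieces $\angles{\psi_\pm,\alpha^j\psi_\pm}$ become, modulo Riesz transforms, products of two half-waves of equal sign, which carry the concentration-along-the-cone null gain, whereas the opposite-sign pieces gain elliptic regularity. The most delicate term is $I_{\mu\nu}(A,\psi)=-(\partial_\beta J_\gamma-\partial_\gamma J_\beta)-([A_\beta,J_\gamma]-[A_\gamma,J_\beta])$, since one derivative is lost on a spinor bilinear that must be placed at the very low regularity $r>-\tfrac18$: here the antisymmetrization in $(\beta,\gamma)$ together with \eqref{2.7} reveals, by a direct symbol computation, a null form in $\partial_\beta J_\gamma-\partial_\gamma J_\beta$ which vanishes to the order of $\tau-|\xi|$ on the Fourier support of the bilinear, recovering the lost derivative; the commutator piece is cubic and combines the bound for $J$ with the regularity of $A$.

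\textbf{Contraction, regularity, constraints.} Collecting these estimates, the map sending $(A,F,\psi)$ to the Duhamel solution of \eqref{1.1a}--\eqref{1.1c} is a contraction on a ball in $F^s_T\times G^r_T\times(X^{l,\frac34+}_+[0,T]+X^{l,\frac34+}_-[0,T])$ for $T$ small depending only on the data norms, yielding existence and uniqueness; applying the same estimates to differences of solutions gives Lipschitz dependence on the data, and re-running the argument at higher regularities $(s',r',l')$ on the same time interval gives persistence of higher regularity. The embeddings $X^{\cdot,b}\hookrightarrow C^0([0,T],H^{\cdot})$ for $b>\half$ give the asserted time continuity. Finally, since the data satisfy \eqref{f1}--\eqref{Const}, a propagation-of-constraints argument (the Lorenz defect $\partial^\mu A_\mu$ and the defect $F-F[A]$ solve homogeneous linear equations with vanishing data) shows the solution of \eqref{1.1a}--\eqref{1.1c} is genuinely a solution of the gauge-fixed Yang-Mills--Dirac system \eqref{0.2}, \eqref{0.4}, \eqref{0.5}.

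\textbf{Main obstacle.} I expect the hard part to be precisely the multilinear estimates of the second paragraph: each of the many quadratic, cubic and quartic terms must be placed in the right $X^{\cdot,b}$-space at essentially sharp exponents, and the interplay of the low regularity of $F$, the derivative loss in $I_{\mu\nu}$, and the Dirac null structure is where the argument is tightest — this is what dictates the long list of arithmetic conditions on $(s,r,l)$.
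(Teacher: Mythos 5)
Your proposal follows essentially the same route as the paper: reduction to the auxiliary system \eqref{1.1a}--\eqref{1.1c}, contraction in $F^s_T\times G^r_T\times X^{l,\frac34+}_\pm[0,T]$ via the multilinear estimates \eqref{3.1}--\eqref{3.5} (null structure exposed through the Dirac projections \eqref{2.7}, the Lorenz gauge and the Hodge decomposition of $\mathbf A$, with the remaining terms handled by the product estimates of Proposition \ref{Prop.1.2'}), followed by propagation of the constraints $\partial^\mu A_\mu=0$ and $F=F[A]$. The only minor deviations are that the paper estimates $J_\nu(\psi)$ by a plain product estimate without invoking null structure, and that the null structure in $H_{i,\pm}$ requires not only \eqref{2.7} but also the Helmholtz splitting of $\mathbf A$ combined with the Lorenz condition; neither affects the overall correctness of your outline.
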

{\bf Remark 1:} The most natural relation between $s$ and $r$ is $r=s-1$ . In this case the condition $2r-s >-1$ would force $s > 1$ , which would exclude the most interesting range $ \frac{3}{4} < s \le 1$ .\\
{\bf Remark 2:} The assumption $s >\frac{3}{4}$ is necessary for one of the estimates, and therefore $2r > s-1 > - \frac{1}{4}$, thus $r > -\frac{1}{8}$. Moreover $2l-s > 0$ $\Leftrightarrow$ $l> \frac{s}{2}  > \frac{3}{8}$. One easily checks that the choice $s= \frac{3}{4} + \epsilon$ , $ r = - \frac{1}{8}+\epsilon$ , $l= \frac{3}{8}+\epsilon$  satisfies our assumptions for arbitrary $\epsilon > 0$ .

We prove local well-posedness by iterating in the $X^{s,b}$-spaces adapted to the operators $i\partial_t \pm \langle \nabla \rangle $ and $\square$ . 
\begin{Def}
\label{Def.1.2} 
$X^{s,b}_{\pm}$ is the completion of $\mathcal S(\R^{1+3})$ with respect to the norm
$$
  \norm{u}_{X^{s,b}_\pm} = \| \angles{\xi}^s \langle -\tau \pm |\xi| \rangle^b \widetilde u(\tau,\xi) \|_{L^2_{\tau,\xi}},
$$
where $\widetilde u(\tau,\xi) = \mathcal F_{t,x} u(\tau,\xi)$ is the space-time Fourier transform of $u(t,x)$.

 Let $X^{s,b}_\pm [0,T]$ denote the restriction space to the interval $[0,T]$ for $T>0$.
In addition to $X^{s,b}_\pm$, we shall also need the wave-Sobolev spaces $H^{s,b}$, defined to be the completion of $\mathcal S(\R^{1+3})$ with respect to the norm
$$
  \norm{u}_{H^{s,b}} = \norm{\angles{\xi}^s \langle |\tau|-  |\xi| \rangle^b \widetilde u(\tau,\xi)}_{L^2_{\tau,\xi}}.
$$
Moreover we define the spaces $F^s$ and $G^r$ by their norms
\begin{align*}
\|u\|_{F^s} &:= \|\Lambda_+ u\|_{H^{s-1,\frac{3}{4}+\epsilon}} \\
\|v\|_{G^r} &:= \|\Lambda_+ v\|_{H^{r-1,\half+\epsilon}}  \ ,
\end{align*}
where $\epsilon > 0$ is sufficiently small. $F^s_T$ and $G^r_T$ denotes the restriction to the time interval $[0,T]$.

 \end{Def}
We recall the fact that
 $$
 X^{s,b}_\pm [0,T]  \hookrightarrow C^0([-T,T];H^s) \quad \text{for} \ b > \half.
$$
We use the following notation:
let $\Lambda^{\alpha}$, $\Lambda_{+}^{\alpha}$ and $\Lambda_{-}^{\alpha}$
be the multipliers with symbols  $$
\langle\xi \rangle^\alpha,
\quad \langle|\tau|  + |\xi|\rangle^\alpha,
\quad \langle|\tau|-|\xi|\rangle^\alpha.
$$
Similarly let $D^{\alpha}$,
$D_{+}^{\alpha}$ and $D_{-}^{\alpha}$ be the multipliers with symbols $$
\abs{\xi}^\alpha,
\quad (\abs{\tau} + \abs{\xi})^\alpha,
\quad ||\tau|-|\xi||^\alpha,
$$
respectively.

Let us make some historical remarks. As is well-known we may impose a gauge condition. We exlusively study the Lorenz gauge $\partial^{\alpha}A_{\alpha} =0$. Other convenient gauges are the Coulomb gauge $\partial^j A_j=0$ and the temporal gauge $A_0 =0$. It is well-known that for the low regularity well-posedness problem for the Yang-Mills equation a null structure for some of the nonlinear terms plays a crucial role. This was first detected by Klainerman and Machedon \cite{KM}, who proved global well-posedness in the case of three space dimensions in temporal and in Coulomb gauge in energy space. The corresponding result in Lorenz gauge, where the Yang-Mills equation can be formulated as a system of nonlinear wave equations, was shown by Selberg and Tesfahun \cite{ST}, who discovered that also in this case some of the nonlinearities have a null structure.  Tesfahun \cite{Te} improved the local well-posedness result to data without finite energy, namely for $(A(0),(\partial_t A)(0) \in H^s \times H^{s-1}$ and $(F(0),(\partial_t F)(0) \in H^r \times H^{r-1}$ with $s > \frac{6}{7}$ and $r > -\frac{1}{14}$, by discovering an additional partial null structure. 
Local well-posedness in energy space was also shown by Oh \cite{O} using a new gauge, namely the Yang-Mills heat flow. He was also able to show that this solution can be globally extended \cite{O1}. Tao \cite{T} showed local well-posedness for small data in $H^s \times H^{s-1}$ for $ s > \frac{3}{4}$ in temporal gauge. 

The local well-posedness problem for coupled Yang-Mills-Higgs equations in Lorenz gauge for low regularity data was considered by A. Tesfahun \cite{Te1}.

In the present paper we treat the coupled Yang-Mills-Dirac equation in Lorenz gauge for space dimension $n=3$  which was considered from the physical point of view by M. D. Schwartz \cite{Sz}. Local existence for smooth initial data, uniqueness in suitable gauges under appropriate conditions on the data and global existence for small and smooth data , i.e. $(A(0),(\partial_t A)(0),F(0),(\partial_t F)(0),\psi(0)) \in H^s \times H^{s-1} \times H^{s-1} \times H^{s-2} \times H^s$ with $s \ge 2$  was proven by  Y. Choquet-Bruhat and D. Christodoulou \cite{CC}, and G. Schwarz and J. Sniatycki \cite{SS}.

Our main result (Theorem \ref{Theorem1.1}) is local well-posedness for $s > \frac{3}{4}$ , $r >-\frac{1}{8}$ and $l > \frac{3}{8}$ , where existence holds in $ A \in C^0([0,T],H^s) \cap C^1([0,T],H^{s-1}) \, , \, F \in C^0([0,T],H^r) \cap C^1([0,T],H^{r-1}) $ , $\psi \in C^0([0,T],H^l)$ and (existence and) uniqueness in a certain subspace of Bourgain-Klainerman-Machedon type $X^{s,b}$  (Theorem \ref{Theorem1.1}). Thus the assumptions on the Cauchy data are significantly weakened. For an essential part of the necessary estimates we rely on Selberg-Tesfahun \cite{ST} and Tesfahun's  result \cite{Te}, who detected   the null structure in most - unfortunately not all  - critical nonlinear terms.  Important is the convenient  atlas of bilinear estimates in wave-Sobolev spaces by \cite{AFS} . We also make use of the methods used by Huh and Oh \cite{HO} for the Chern-Simons-Dirac equation.

Finally we remark that even in the case of the pure Yang-Mills system the regularity assumptions on the Cauchy data are slightly weakened compared to Tesfahun's result \cite{Te}. For this result we have to modify the solution spaces appropriately. Here we rely on some of the results which were used by Klainerman and Selberg \cite{KS} in order to prove an almost optimal well-posedness result for (a model problem of) the Yang-Mills equation in 4+1 dimensions.

\section{Preliminaries}
  The following product estimates for wave-Sobolev spaces were proven in \cite{AFS}.
\begin{prop}
\label{Prop.1.2'} Let $n=3$ .
 For $s_0,s_1,s_2,b_0,b_1,b_2 \in {\mathbb R}$ and $u,v \in   {\mathcal S} ({\mathbb R}^{3+1})$ the estimate
$$\|uv\|_{H^{-s_0,-b_0}} \lesssim \|u\|_{H^{s_1,b_1}} \|v\|_{H^{s_2,b_2}} $$ 
holds, provided the following conditions are satisfied:
\begin{align*}
\nonumber
& b_0 + b_1 + b_2 > \frac{1}{2} \, ,
& b_0 + b_1 \ge 0 \, ,\quad \qquad  
& b_0 + b_2 \ge 0 \, ,
& b_1 + b_2 \ge 0
\end{align*}
\begin{align*}
\nonumber
&s_0+s_1+s_2 > 2 -(b_0+b_1+b_2) \\
\nonumber
&s_0+s_1+s_2 > \frac{3}{2} -\min(b_0+b_1,b_0+b_2,b_1+b_2) \\
\nonumber
&s_0+s_1+s_2 > 1 - \min(b_0,b_1,b_2) \\
\nonumber
&s_0+s_1+s_2 > 1 \\
 &(s_0 + b_0) +2s_1 + 2s_2 > \frac{3}{2} \\
\nonumber
&2s_0+(s_1+b_1)+2s_2 > \frac{3}{2} \\
\nonumber
&2s_0+2s_1+(s_2+b_2) > \frac{3}{2}
\end{align*}
\begin{align*}
\nonumber
&s_1 + s_2 \ge \max(0,-b_0) \, ,\quad
\nonumber
s_0 + s_2 \ge \max(0,-b_1) \, ,\quad
\nonumber
s_0 + s_1 \ge \max(0,-b_2)   \, .
\end{align*}
\end{prop}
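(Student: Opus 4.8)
The plan is to rewrite the system \eqref{1.1a}--\eqref{1.1c} in Duhamel form and solve it by a contraction argument in the product space
$$
\mathcal X_T := \bigl(X^{l,\frac{3}{4}+\epsilon}_+[0,T]+X^{l,\frac{3}{4}+\epsilon}_-[0,T]\bigr)\times F^s_T\times G^r_T ,
$$
for $T>0$ small, depending only on the data norms. Keeping $F$ as an independent unknown, rather than setting $F=F[A]$ (which would only place $F$ in $H^{s-1}$), is the device of Selberg--Tesfahun \cite{ST}: it lets us propagate $F$ at the better regularity $r>s-1$ and feed that gain back into the nonlinearities of all three equations. The linear ingredients are standard: transfer/energy estimates for $i\partial_t\pm|\nabla|$ in $X^{l,b}_\pm$ and for $\square$ in the modified wave-Sobolev spaces $F^s,G^r$ (the $\Lambda_+$-modification, supplying extra decay in the high-modulation regime, is taken from Klainerman--Selberg \cite{KS}), together with the restriction-in-time lemma that produces a small factor $T^{\delta}$, $\delta>0$, when the temporal exponent is lowered slightly below threshold. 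Thus everything reduces to a closed list of multilinear estimates, each with a power of $T$ to spare.

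First I would establish the nonlinear estimates. For the Dirac equation one needs $\|H_{i,\pm}(A,\psi)\|_{X^{l,-\frac{1}{4}+\epsilon}_\pm}\lesssim \|A\|_{F^s}\,\|\psi\|_{X^{l,\frac{3}{4}+\epsilon}}$; for the $A$-equation, after applying $\Lambda_+$ as in the definition of $F^s_T$, bounds of the schematic form $\|K_\nu(A,F)\|\lesssim \|A\|_{F^s}^2+\|A\|_{F^s}\|F\|_{G^r}+\|A\|_{F^s}^3$ and $\|J_\nu(\psi)\|\lesssim \|\psi\|^2_{X^{l,\frac{3}{4}+\epsilon}}$ in $H^{s-1,-\frac{1}{4}+\epsilon}$; for the $F$-equation, again after $\Lambda_+$, bounds for $L_{\mu\nu}(A,F)$ (the many quadratic and two cubic terms in \eqref{YMF2}) and for $I_{\mu\nu}(A,\psi)=-D_\beta J_\gamma(\psi)+D_\gamma J_\beta(\psi)$ in $H^{r-1,-\frac{1}{2}+\epsilon}$. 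The mechanism in each case is the same: expose the null structure, then reduce to Proposition \ref{Prop.1.2'}. For the Dirac bilinearities the key point, following \cite{AFS1} and \cite{HO}, is that \eqref{2.7} forces the elliptic projections to be paired so that $\Pi_{\pm_1}(\cdot)\,\alpha^\mu\,\Pi_{\pm_2}(\cdot)$ gains a factor comparable to the angle between the frequencies $\pm_1\xi$ and $\pm_2\eta$; the same holds for the spinorial current $\langle\psi^i,\alpha_\nu T^a_{ij}\psi^j\rangle$. For the Yang--Mills self-interactions the $\mathcal Q_{\alpha\beta}$-type null structure of the quadratic terms in $K_\nu$ and in \eqref{YMF2} is that of \cite{ST}, and the additional partial null structure needed because $F$ sits at negative regularity is Tesfahun's \cite{Te}. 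Once each null symbol is dominated by a product of elliptic and modulation weights, the side conditions of Proposition \ref{Prop.1.2'} become exactly the displayed inequalities on $s,r,l$.

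The hard part will be the small number of critical terms that carry no null structure: above all the differentiated current terms $D_\beta J_\gamma(\psi)$ and $D_\gamma J_\beta(\psi)$ in $I_{\mu\nu}$, which lose a full derivative on a spinor bilinear whose two factors lie only in $H^l$ with $l$ as low as $\frac{3}{8}+$, together with the non-null cubic Yang--Mills terms $[A^\alpha,[A_\alpha,F_{\beta\gamma}]]$ and $[[A^\alpha,A_\beta],[A_\alpha,A_\gamma]]$. For these I cannot invoke angular cancellation and must instead exploit the genuinely lower target regularity $r$ of $F$, the extra modulation weight built into $F^s$ and $G^r$ through $\Lambda_+$ (this is also why $F^s$ carries the exponent $\frac{3}{4}+$ while $G^r$ needs only $\frac{1}{2}+$), and Proposition \ref{Prop.1.2'} pushed to its endpoint. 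It is the coupling of all three equations through precisely these bad terms that forces the full asymmetric system of inequalities on $(s,r,l)$; I expect the binding constraints to be $2r-s>-1$ (from the cubic Yang--Mills term acting on $F$), $3l-2r>\frac{3}{4}$ (from the $A$-times-current contribution to $I_{\mu\nu}$) and $3s-2r>\frac{7}{4}$ (from the differentiated current), rather than the cheaper high-regularity conditions.

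Finally, with the multilinear estimates in hand, choosing $T$ small makes the Duhamel map a contraction on a ball in $\mathcal X_T$, giving existence and uniqueness there; the embeddings into $C^0_tH^\cdot_x\cap C^1_tH^\cdot_x$ for $A,F$ and $C^0_tH^l_x$ for $\psi$, persistence of higher regularity, and Lipschitz dependence on the data follow in the usual way by rerunning the same estimates on differences. It remains to verify a posteriori that the constructed triple actually solves \eqref{0.2},\eqref{0.4},\eqref{0.5}: since the data were prescribed so that \eqref{f1}--\eqref{Const} hold, the Lorenz quantity $\partial^\mu A_\mu$ and the difference $F-F[A]$ are seen to satisfy homogeneous linear wave (respectively transport) equations with vanishing Cauchy data — using the Bianchi identity and the $\nu=0$ component of \eqref{0.1} — and hence vanish identically, so that $A$, $F=F[A]$, $\psi$ form a genuine solution of the original Yang--Mills--Dirac system in Lorenz gauge.
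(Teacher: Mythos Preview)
Your proposal does not prove the stated proposition at all. Proposition~\ref{Prop.1.2'} is a bilinear product estimate $\|uv\|_{H^{-s_0,-b_0}}\lesssim\|u\|_{H^{s_1,b_1}}\|v\|_{H^{s_2,b_2}}$ in wave--Sobolev spaces, under the listed conditions on the exponents. What you have written is instead a proof sketch for the main well-posedness result (Theorem~\ref{Theorem1.1} via Proposition~\ref{Prop.1}); indeed, in your own text you repeatedly \emph{invoke} Proposition~\ref{Prop.1.2'} as a tool (``then reduce to Proposition~\ref{Prop.1.2'}''), which makes plain that you are not proving it.

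For the record, the paper does not prove Proposition~\ref{Prop.1.2'} either: it is quoted verbatim from the atlas of d'Ancona--Foschi--Selberg \cite{AFS}, where the full proof (dyadic decomposition in spatial frequency and modulation, case analysis on the dominant weight, and bilinear $L^2$ estimates for restrictions to thickened cones) is carried out. If your intention was to prove the main theorem, your outline is broadly consonant with the paper's strategy, but as a proof of the product estimate itself it is simply the wrong statement.
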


\begin{prop}[Null form estimates, \cite{ST} ]  
\label{Prop.1.2}
Let $\sigma_0,\sigma_1,\sigma_2,\beta_0,\beta_1,\beta_2 \in \R$. Assume that
\begin{equation*}
\left\{
\begin{aligned} 
 & 0 \le \beta_0 < \frac12 < \beta_1,\beta_2 < 1,
  \\
 & \sum \sigma_i + \beta_0 > \frac32 - (\beta_0 + \sigma_1 + \sigma_2),
  \\
 & \sum \sigma_i > \frac32 - (\sigma_0 + \beta_1 + \sigma_2),
  \\
&  \sum \sigma_i > \frac32 - (\sigma_0 + \sigma_1 + \beta_2),
  \\
  &\sum \sigma_i + \beta_0 \ge 1,
  \\
 & \min(\sigma_0 + \sigma_1, \sigma_0 + \sigma_2, \beta_0 + \sigma_1 + \sigma_2) \ge 0,
\end{aligned}
\right.
\end{equation*} 
and that the last two inequalities are not both equalities. Let
\begin{align}
\nonumber
&{\mathcal F}(B_{\pm_1,\pm_2} (\psi_{1_{\pm_1}}, \psi_{2_{\pm_2}}))(\tau_0,\xi_0) \\
\label{2}
& := \int_{\tau_1+\tau_2= \tau_0\, \xi_1+\xi_2=\xi_0} |\angle(\pm_1 \xi_1,\pm_2 \xi_2)|  \widehat{\psi_{1_{\pm_1}}}(\tau_1,\xi_1)  \widehat{\psi_{2_{\pm_2}}}(\tau_2,\xi_2) d\tau_1 d\xi_1 \, . 
\end{align}
Then we have the null form estimate
$$
  \norm{B_{(\pm_1 \xi_1,\pm_2 \xi_2)}(u,v)}_{H^{-\sigma_0,-\beta_0}}
  \lesssim
  \norm{u}_{X^{\sigma_1,\beta_1}_{\pm_1}} \norm{v}_{X^{\sigma_2,\beta_2}_{\pm 2}}\, .
$$
\end{prop}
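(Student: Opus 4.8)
The plan is to deduce the estimate from the product estimates of Proposition~\ref{Prop.1.2'}, once the null symbol $|\angle(\pm_1\xi_1,\pm_2\xi_2)|$ has been exploited; this is the scheme of \cite{AFS} and \cite{ST}. The first ingredient is the elementary angular bound. On the convolution set $\tau_0=\tau_1+\tau_2$, $\xi_0=\xi_1+\xi_2$, write $h_j:=\langle-\tau_j\pm_j|\xi_j|\rangle$ for $j=1,2$ and $h_0:=\langle|\tau_0|-|\xi_0|\rangle$. From the identity $(-\tau_0\pm_0|\xi_0|)-(-\tau_1\pm_1|\xi_1|)-(-\tau_2\pm_2|\xi_2|)=\pm_0|\xi_1+\xi_2|\mp_1|\xi_1|\mp_2|\xi_2|$ (valid for any sign $\pm_0$), together with the elementary geometric fact that for a suitable $\pm_0$ the right-hand side has modulus $\gtrsim\min(\langle\xi_1\rangle,\langle\xi_2\rangle)\,\angle(\pm_1\xi_1,\pm_2\xi_2)^2$, one obtains
$$
|\angle(\pm_1\xi_1,\pm_2\xi_2)|\ \lesssim\ \min\Big(1,\ \Big(\tfrac{\max(h_0,h_1,h_2)}{\min(\langle\xi_1\rangle,\langle\xi_2\rangle)}\Big)^{1/2}\Big),
$$
the substitution of $\langle|\tau_0|-|\xi_0|\rangle$ for $\langle-\tau_0\pm_0|\xi_0|\rangle$ being harmless in the only (non-resonant, high output frequency) region where it is not automatic. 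Moreover, since $|\tau_0|\le|\xi_1|+|\xi_2|+h_1+h_2$, in the subcase where $h_0$ is \emph{strictly} dominant one even has $h_0\lesssim\min(\langle\xi_1\rangle,\langle\xi_2\rangle)$.

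Next I would localise $u$, $v$ and $B_{\pm_1,\pm_2}(u,v)$ dyadically in modulation and distinguish cases according to which of $h_0,h_1,h_2$ is maximal; by the invariance of the hypotheses under $1\leftrightarrow2$ we may assume $\langle\xi_1\rangle\le\langle\xi_2\rangle$, and the piece $\langle\xi_1\rangle\lesssim1$ is trivial (there $|\angle|\lesssim1$ and Proposition~\ref{Prop.1.2'} applies directly). If $h_1$ is maximal, insert $|\angle|\lesssim h_1^{1/2}\langle\xi_1\rangle^{-1/2}$; absorbing $h_1^{1/2}$ into $\langle-\tau_1\pm_1|\xi_1|\rangle^{\beta_1}$ and $\langle\xi_1\rangle^{-1/2}$ into $\langle\xi_1\rangle^{\sigma_1}$, and using $X^{s,b}_\pm\hookrightarrow H^{s,b}$ for $b\ge0$, reduces the claim to
$$
\|uv\|_{H^{-\sigma_0,-\beta_0}}\ \lesssim\ \|u\|_{H^{\sigma_1+\frac12,\,\beta_1-\frac12}}\,\|v\|_{H^{\sigma_2,\beta_2}},
$$
an instance of Proposition~\ref{Prop.1.2'} with $(s_0,s_1,s_2)=(\sigma_0,\sigma_1+\tfrac12,\sigma_2)$, $(b_0,b_1,b_2)=(\beta_0,\beta_1-\tfrac12,\beta_2)$; the case $h_2$ maximal is the mirror image (with the gain on the $v$-factor). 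If $h_0$ is maximal, the gain $h_0^{1/2}$ lands on the output weight; using $h_0\lesssim\langle\xi_1\rangle$ one trades it back for $\langle\xi_1\rangle^{1/2}$ and is reduced once more to Proposition~\ref{Prop.1.2'} with exponents shifted by $\pm\tfrac12$ on the lowest-frequency and the output slots, the high$\times$high$\to$low interaction being disposed of even more easily.

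The substance of the argument — and, I expect, the main source of work — is the verification that in each of these finitely many configurations the shifted exponents land inside the admissible range of Proposition~\ref{Prop.1.2'}. Going down its list of inequalities: $\sum\sigma_i+\beta_0\ge1$ together with $\beta_1,\beta_2>\tfrac12$ produces the first (``$2-\sum b$'') condition and the summability of the dyadic sums; the three inequalities $\sum\sigma_i+\beta_0>\tfrac32-(\beta_0+\sigma_1+\sigma_2)$, $\sum\sigma_i>\tfrac32-(\sigma_0+\beta_1+\sigma_2)$, $\sum\sigma_i>\tfrac32-(\sigma_0+\sigma_1+\beta_2)$ become the three mixed ``$(s_i+b_i)$'' conditions; and $\min(\sigma_0+\sigma_1,\sigma_0+\sigma_2,\beta_0+\sigma_1+\sigma_2)\ge0$ takes care of the remaining lower-order requirements. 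The only borderline situation is $\sum\sigma_i+\beta_0=1$: there the hypothesis that the last two inequalities are not both equalities must be invoked, and in the $h_0$-maximal case one additionally uses the restriction $h_0\lesssim\langle\xi_1\rangle$ to stay off the critical concentration scenario of Proposition~\ref{Prop.1.2'}. Summing the resulting geometric series over the dyadic parameters then finishes the proof.
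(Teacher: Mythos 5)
The paper itself offers no proof of this proposition -- it is imported verbatim from \cite{ST} -- so the relevant comparison is with the standard argument there, and your sketch is indeed that argument: bound the angle by $\min\bigl(1,(\max(h_0,h_1,h_2)/\min(\langle\xi_1\rangle,\langle\xi_2\rangle))^{1/2}\bigr)$, split according to which weight dominates, absorb the gain into the corresponding norm, and invoke the product estimates of Proposition~\ref{Prop.1.2'}. Your angular lemma is correct (including the point that when the geometrically forced sign of $\pm_0$ is the ``wrong'' one, $h_1+h_2\gtrsim\min(|\xi_1|,|\xi_2|)$ makes the substitution of $\langle|\tau_0|-|\xi_0|\rangle$ harmless, and that $h_0\lesssim\min(\langle\xi_1\rangle,\langle\xi_2\rangle)$ when $h_0$ strictly dominates). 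I checked the bookkeeping in the $h_1$- and $h_2$-dominant cases: with $(s_0,s_1,s_2)=(\sigma_0,\sigma_1+\tfrac12,\sigma_2)$ and $b_1$ (resp.\ $b_2$) lowered by $\tfrac12$, every condition of Proposition~\ref{Prop.1.2'} follows strictly from the hypotheses (note the asymmetry: under the normalization $\langle\xi_1\rangle\le\langle\xi_2\rangle$ the spatial gain always lands on the first factor even when $h_2$ is dominant, so ``mirror image'' is slightly inaccurate, but the verification goes through). The dyadic summation you invoke at the end is unnecessary; a partition of the convolution region into the three dominance cases plus the low-frequency piece suffices.

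There is, however, a genuine gap exactly where you suspect one: the borderline case $\sum\sigma_i+\beta_0=1$ (permitted by the hypotheses provided $\min(\sigma_0+\sigma_1,\sigma_0+\sigma_2,\beta_0+\sigma_1+\sigma_2)>0$). In the $h_0$-dominant regime every way of distributing the gain lands precisely on an equality in one of the \emph{strict} inequalities of Proposition~\ref{Prop.1.2'} as quoted: putting $h_0^{\beta_0}$ on the output and converting the rest to $\langle\xi_1\rangle^{-\beta_0}$ gives $s_0+s_1+s_2=\sum\sigma_i+\beta_0=1$, violating $s_0+s_1+s_2>1$; your variant with $b_0=\beta_0-\tfrac12$ and $s_1=\sigma_1+\tfrac12$ instead violates $s_0+s_1+s_2>1-\min(b_0,b_1,b_2)=\tfrac32-\beta_0$. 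The clause ``the last two inequalities are not both equalities'' and the restriction $h_0\lesssim\langle\xi_1\rangle$ do not by themselves repair this; what is actually needed is the full form of the atlas in \cite{AFS}, which allows equality in several of the conditions under supplementary provisos -- this is what \cite{ST} rely on, and it is not available from the truncated Proposition~\ref{Prop.1.2'} stated in this paper. Since every application of the null form estimate in Sections 5 verifies $\sum\sigma_i+\beta_0>1$ strictly, the gap is immaterial for the paper's main theorem, but as a proof of the proposition in the stated generality the borderline case remains open in your write-up.
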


The following multiplication law is well-known:
\begin{prop} {\bf (Sobolev multiplication law)}
\label{SML}
Let $n\ge 2$ , $s_0,s_1,s_2 \in \R$ . Assume
$s_0+s_1+s_2 > \frac{n}{2}$ , $s_0+s_1 \ge 0$ ,  $s_0+s_2 \ge 0$ , $s_1+s_2 \ge 0$. Then the following product estimate holds:
$$ \|uv\|_{H^{-s_0}} \lesssim \|u\|_{H^{s_1}} \|v\|_{H^{s_2}} \, .$$
\end{prop}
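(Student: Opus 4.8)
The plan is to prove this by a Littlewood--Paley (paraproduct) decomposition, which is the standard route and makes transparent where each of the four hypotheses is used. First I would decompose $u=\sum_{j\ge 0}P_ju$ and $v=\sum_{k\ge 0}P_kv$, where $P_j$ localizes to frequencies $\abs\xi\sim 2^j$ for $j\ge1$ and to $\abs\xi\lesssim1$ for $j=0$, so that $uv=\sum_{j,k}(P_ju)(P_kv)$. Since $\norm{uv}_{H^{-s_0}}^2\sim\sum_{l\ge0}2^{-2s_0l}\norm{P_l(uv)}_{L^2}^2$, it is enough to bound each output block $\norm{P_l(uv)}_{L^2}$. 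I would then split the $(j,k)$-sum into the three regimes: \emph{low--high} ($j\le k-3$), \emph{high--low} ($j\ge k+3$), and \emph{high--high} ($\abs{j-k}\le2$); in the first two, $(P_ju)(P_kv)$ has frequency support $\sim2^{\max(j,k)}$ and so only feeds output blocks $l\sim\max(j,k)$, while in the third it has support $\lesssim2^j$ and may feed all $l\lesssim j$.

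The only analytic inputs I would need are Bernstein's inequality in the forms $\norm{P_jf}_{L^\infty}\lesssim2^{jn/2}\norm{P_jf}_{L^2}$ and $\norm{P_l(fg)}_{L^2}\lesssim2^{ln/2}\norm{f}_{L^2}\norm{g}_{L^2}$, together with Cauchy--Schwarz and summation of geometric series. In the low--high regime one gets $\norm{P_l(uv)}_{L^2}\lesssim\bigl(\sum_{j\le l-3}2^{jn/2}\norm{P_ju}_{L^2}\bigr)\norm{P_lv}_{L^2}$; estimating $\sum_{j\le l}2^{jn/2}\norm{P_ju}_{L^2}$ by Cauchy--Schwarz in $j$ and feeding the result into $\sum_l2^{-2s_0l}(\cdot)^2\norm{P_lv}_{L^2}^2$ produces exactly the conditions $s_0+s_1+s_2>\tfrac n2$ (when $s_1\le\tfrac n2$) and $s_0+s_2\ge0$ (when $s_1>\tfrac n2$). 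The high--low regime is the mirror image and yields $s_0+s_1+s_2>\tfrac n2$ and $s_0+s_1\ge0$. The borderline values $s_1=\tfrac n2$ (resp.\ $s_2=\tfrac n2$) produce only a logarithmic loss, which is absorbed because the strict inequality $s_0+s_1+s_2>\tfrac n2$ then forces $s_0+s_2>0$ (resp.\ $s_0+s_1>0$).

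The step I expect to be the real obstacle is the high--high regime, since there the two input frequencies can cancel and the product can sit at arbitrarily low frequency, so Bernstein at the output scale $2^l\le2^j$ is lossy: $\norm{P_l(uv)}_{L^2}\lesssim2^{ln/2}\sum_{j\ge l-3}\norm{P_ju}_{L^2}\norm{P_jv}_{L^2}$. Writing $\norm{P_ju}_{L^2}\norm{P_jv}_{L^2}=2^{-j(s_1+s_2)}a_jb_j$ with $(a_j),(b_j)\in\ell^2$ of norms $\norm u_{H^{s_1}},\norm v_{H^{s_2}}$, the hypothesis $s_1+s_2\ge0$ is precisely what allows $\sum_{j\ge l}2^{-j(s_1+s_2)}a_jb_j\le2^{-l(s_1+s_2)}\norm u_{H^{s_1}}\norm v_{H^{s_2}}$, whence $\norm{P_l(uv)}_{L^2}\lesssim2^{l(n/2-s_1-s_2)}\norm u_{H^{s_1}}\norm v_{H^{s_2}}$; then $\sum_l2^{-2s_0l}2^{2l(n/2-s_1-s_2)}<\infty$ is nothing but the strict condition $s_0+s_1+s_2>\tfrac n2$. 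Summing the three regimes gives the claim. As a sanity check, in the subrange $s_0=0$, $s_1,s_2\ge0$ the estimate also follows from $H^{s_i}\hookrightarrow L^{p_i}$ plus H\"older, and the general range then by duality and real interpolation; the Littlewood--Paley argument above has the advantage of covering all admissible $(s_0,s_1,s_2)$ at once, and the hypothesis $n\ge2$ plays no essential role.
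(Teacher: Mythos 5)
Your argument is correct, and in fact it supplies a proof where the paper gives none: the paper states Proposition \ref{SML} as ``well-known'' and cites no source and offers no argument, so there is nothing internal to compare against. Your Littlewood--Paley trichotomy is the standard route to this atlas of product estimates, and you account for each hypothesis in exactly the place where it is needed: the strict inequality $s_0+s_1+s_2>\frac n2$ in the high--high interaction (and in the low--high/high--low interactions when the low-frequency factor has regularity below $\frac n2$), the conditions $s_0+s_2\ge 0$ and $s_0+s_1\ge 0$ in the low--high and high--low interactions when the low-frequency factor has regularity above $\frac n2$, and $s_1+s_2\ge 0$ to sum the high--high tail $\sum_{j\ge l}2^{-j(s_1+s_2)}a_jb_j$. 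The treatment of the borderline $s_1=\frac n2$ (logarithmic loss absorbed because $s_0+s_2$ is then forced to be strictly positive) is also right. Two cosmetic points only: in the low--high block the output $P_l(uv)$ couples to the finitely many input blocks $P_kv$ with $\abs{k-l}\le 2$ rather than to $P_lv$ alone, which changes nothing; and, as you note, the hypothesis $n\ge 2$ is not used, the argument being valid for all $n\ge 1$.
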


Finally, we formulate the fundamental theorem which allows to reduce the local well-posedness for a system of nonlinear wave equations to suitable estimates for the nonlinearities. It is essentially contained in the paper by \cite{KS}. 

\begin{prop}
\label{Prop1.6} 
Let $u_0 \in H^s$ , $u_1 \in H^{s-1}$ , $v_0 \in H^r$ , $v_1 \in H^{r-1}$ , $\psi_{{\pm}_0} \in H^l$ be given. Assume that
\begin{align*}
\|{\mathcal H}_{\pm}(u,\partial u,v , \partial v,\psi_+,\psi_-)\|_{X^{l,b-1+\epsilon}_{\pm}} &\le  \omega_0(\|u\|_{F^s},\|v\|_{G^r},\|\psi_+\|_{X^{l,b}_+},\|\psi_-\|_{X^{l,b}_-}) \, , \\ 
\| \Lambda_+^{-1} \Lambda_-^{\epsilon-1} {\mathcal M}(u,\partial u,v , \partial v,\psi_+,\psi_-)\|_{F^s} & \le \omega_1(\|u\|_{F^s},\|v\|_{G^r},\|\psi_+\|_{X^{l,b}_+},\|\psi_-\|_{X^{l,b}_-}) \, , \\
\| \Lambda_+^{-1} \Lambda_-^{\epsilon-1} {\mathcal N}(u,\partial u,v , \partial v,\psi_+,\psi_-)\|_{G^r} & \le \omega_2(\|u\|_{F^s},\|v\|_{G^r},\|\psi_+\|_{X^{l,b}_+},\|\psi_-\|_{X^{l,b}_-}) \, , 
\end{align*}
and
\begin{align*}
&\|{\mathcal H}_{\pm}(u,\partial u,v , \partial v,\psi_+,\psi_-)- {\mathcal H}_{\pm}(u',\partial u',v' , \partial v',\psi_+',\psi_-')\|_{X^{l,b-1+\epsilon}_{\pm}}\\
&+\| \Lambda_+^{-1} \Lambda_-^{\epsilon-1} ({\mathcal M}(u,\partial u,v , \partial v,\psi_+,\psi_-) - {\mathcal M}(u',\partial u',\partial v',\psi_+',\psi_-'))\|_{F^s} \\
&+
\| \Lambda_+^{-1} \Lambda_-^{\epsilon-1} ({\mathcal N}(u,\partial u,v , \partial v,\psi_+,\psi_-) - {\mathcal N}(u',\partial u',v' , \partial v',\psi_+',\psi_-')\|_{G^r} \\ &\le \omega(\|u\|_{F^s},\|u'\|_{F^s},\|v\|_{G^r}, \|v'\|_{G^r},\|\psi_{\pm}\|_{X_{\pm}^{l,b}})\, \cdot \\
&\quad \cdot(\|u-u'\|_{F^s} + \|v-v'\|_{G^r} +\|\psi_+- \psi_+'\|_{X_+^{l,b}}  + +\|\psi_-- \psi_-'\|_{X_-^{l,b}})             \, , \\
\end{align*}
where $\omega,\omega_0,\omega_1,\omega_2$ are continuous functions with $\omega(0,0,0,0) = \omega_0(0,0,0,0) = \omega_1(0,0,0,0) = \omega_2(0,0,0,0) = 0$.
Then the Cauchy problem
$$ \Box \, u = {\mathcal M}(u,\partial u,v,\partial v,\psi_+,\psi_-) \quad , \quad \Box \, v = {\mathcal N}(u,\partial u,v,\partial v,\psi_+,\psi_-) \,$$
$$ (i\partial_t \pm |\nabla|)\psi_{\pm} = {\mathcal H}_{\pm}(u,\partial u,v , \partial v,\psi_+,\psi_-) $$
with data
$$\psi_{\pm}(0) = \psi_{\pm_0} \, , \, u(0) = u_0 \, , \, (\partial_t u)(0) = u_1 \, , \, v(0)= v_0 \, , \,(\partial_t v)(0) = v_1 $$
is locally well-posed, i.e. , there exists $T>0$ , such that there exists a unique solution $\psi_{\pm} \in X_{\pm}^{l,b}[0,T]$ ,  $u \in F^s_T$ , $v \in G^r_T$ .
\end{prop}
\begin{proof}
This is proved by the contraction mapping principle provided the solution space fulfills suitable assumptions. The case of a single equation $\Box \, u = {\mathcal M}(u,\partial u)$ and the solution space $F^s$ was proven by \cite{KS}, Theorems 5.4 and 5.5, Propositions 5.6 and 5.7. Similarly the well-known case of the equation $                     (i\partial_t \pm |\nabla|)\psi_{\pm} = {\mathcal H}_{\pm}(u,\psi_+,\psi_-)$ is an immediate consequence of \cite{AFS1}, Lemma 5. Our case is a straightforward modification of these results, thus we omit the proof.
\end{proof}

\section{Null structure}
Our aim in what follows is the following proposition.
\begin{prop}
\label{Prop.1}
Let the assumptions on $s,l,r$ of Theorem \ref{Theorem1.1} be satisfied.
Given initial data $a_0^{\nu} \in H^s$ , $a_1^{\nu} \in H^{s-1}$ , $  f_0^{\mu \nu}  \in H^r$ ,  $f_1^{\mu\nu} \in H^{r-1}$ , $ \psi_{\pm}^0 \in H^l$ ,   there exists a time $T > 0$ , which depends on the norms of the data, such that the modified  Cauchy problem (\ref{1.1a}),(\ref{1.1b}),(\ref{1.1c}) with data
\begin{align*}
&A_{\nu}(0) = a_0^{\nu}, \, (\partial_t A_{\nu})(0) =  a_1^{\nu},
        \,
    F_{\mu \nu}(0) =f_0^{\nu}, \, (\partial_t F_{\mu \nu})(0) =  f_1^{\mu \nu}, \\ & \, \psi_{i,\pm}(0) = \psi_{i,\pm}^0 =  \Pi_{\pm} \psi_0^i .
\end{align*}
 has a unique solution $$A \in F^s_T \, , \,F\in G^r_T \, , \, \psi_{\pm} \in X_{\pm}^{l,\frac{3}{4}+}[0,T]$$ (these spaces are defined in Def. \ref{Def.1.2}). The solution depends continously on the data and higher regularity persists.
\end{prop}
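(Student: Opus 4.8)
The plan is to deduce Proposition~\ref{Prop.1} directly from the abstract local well-posedness statement Proposition~\ref{Prop1.6}, applied with $u=A_\nu$, $v=F_{\mu\nu}$, $b=\tfrac34+\epsilon$ and with the nonlinearities
\[
\mathcal M=K_\nu(A,F)+J_\nu(\psi),\qquad
\mathcal N=L_{\mu\nu}(A,F)+I_{\mu\nu}(A,\psi),\qquad
\mathcal H_\pm=H_{i,\pm}(A,\psi)
\]
read off from \eqref{1.1a}, \eqref{1.1b}, \eqref{1.1c}. Since $\|\Lambda_+^{-1}\Lambda_-^{\epsilon-1}\,\cdot\,\|_{F^s}=\|\cdot\|_{H^{s-1,-\frac14+2\epsilon}}$, $\|\Lambda_+^{-1}\Lambda_-^{\epsilon-1}\,\cdot\,\|_{G^r}=\|\cdot\|_{H^{r-1,-\frac12+2\epsilon}}$ and $b-1+\epsilon=-\tfrac14+2\epsilon$, the whole problem reduces to the multilinear estimates
\[
\|H_{i,\pm}(A,\psi)\|_{X^{l,-\frac14+2\epsilon}_\pm},\qquad
\|K_\nu(A,F)+J_\nu(\psi)\|_{H^{s-1,-\frac14+2\epsilon}},\qquad
\|L_{\mu\nu}(A,F)+I_{\mu\nu}(A,\psi)\|_{H^{r-1,-\frac12+2\epsilon}}
\]
bounded by the appropriate products of $\|A\|_{F^s}$, $\|F\|_{G^r}$, $\|\psi_+\|_{X^{l,b}_+}$, $\|\psi_-\|_{X^{l,b}_-}$. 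Every nonlinearity is a polynomial of degree at most three in the unknowns, so the difference (Lipschitz) estimates required by Proposition~\ref{Prop1.6} follow from the very same bilinear and trilinear inequalities by telescoping; hence it is enough to establish the displayed bounds.

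For the pure Yang--Mills part I would follow \cite{ST} and \cite{Te}. The quadratic terms of $K_\nu(A,F)$, the term $[A^\alpha,\partial_\alpha F_{\beta\gamma}]$, and the quadratic terms of \eqref{YMF2} (those of the schematic form $[\partial A,\partial A]$ after imposing the Lorenz gauge) carry the (partial) null structure found there; after splitting $A=\sum_\pm A^\pm$ into half-wave pieces they are controlled by the null form estimate Proposition~\ref{Prop.1.2}. The cubic terms $[A^\alpha,[A_\alpha,A_\nu]]$, $[A^\alpha,[A_\alpha,F_{\beta\gamma}]]$, $[F_{\alpha\beta},[A^\alpha,A_\gamma]]$ and $[[A^\alpha,A_\beta],[A_\alpha,A_\gamma]]$ are of lower order and are handled by iterated use of Proposition~\ref{Prop.1.2'} together with the Sobolev multiplication law Proposition~\ref{SML}. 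The only subtlety is that the iteration runs in the modified spaces $F^s$, $G^r$ of \cite{KS} rather than in standard $X^{s,b}$; the estimates of \cite{ST}, \cite{Te} transfer to this setting, and it is precisely this modification that yields the small improvement over \cite{Te} advertised in the introduction.

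The core of the argument, and what I expect to be the main obstacle, is the Dirac right-hand side $H_{i,\pm}(A,\psi)=\Pi_\pm(A^a_\mu\alpha^\mu T^a_{ij}\psi^j)$. Here I would write $\psi^j=\psi^j_++\psi^j_-$, decompose $A_\mu$ into half-waves, and invoke the algebraic identity \eqref{2.7}, $\alpha^j\Pi_\pm=\Pi_\mp\alpha^j\Pi_\pm-R^j_\pm\Pi_\pm$ (and $\alpha^0\Pi_\pm=\Pi_\pm$). The first summand produces a genuine null form of the type $B_{\pm_1,\pm_2}$ of Proposition~\ref{Prop.1.2}: two opposite-sign projections flank the spinor factor, which supplies the angular gain that makes the estimate close, exactly as in the Chern--Simons--Dirac analysis of \cite{HO}. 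The residual summand $R^j_\pm\Pi_\pm$ is only a bounded Riesz-type multiplier and carries \emph{no} null structure; together with the temporal contribution coming from $A_0\,\alpha^0\,\psi=A_0\psi$, it must be estimated bare, using $X^{l,b}_\pm\hookrightarrow H^{l,b}$ and treating $F^s$ essentially as the wave-Sobolev space $H^{s,\frac34+\epsilon}$ (the $\Lambda_+$-weight only helping), via the product estimate Proposition~\ref{Prop.1.2'} with $s_0\approx l$, one factor the potential at effective regularity $s$ and one factor the spinor at regularity $l$. Verifying that Proposition~\ref{Prop.1.2'} applies to this non-null piece under exactly the hypotheses of Theorem~\ref{Theorem1.1} is the delicate point: it is these terms that, roughly, force $l>\tfrac38$, $2l-s>0$, $3l-2r>\tfrac34$, $2s-r>\tfrac32$, and --- through the $A_0$ interaction --- the lower bound $s>\tfrac34$.

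Finally the genuinely new coupling terms in the Yang--Mills equations. The source $J_\nu(\psi)=-\langle\psi^i,\alpha_\nu T^a_{ij}\psi^j\rangle T_a$ is a Dirac bilinear; expanding $\langle\psi_{\pm_1},\alpha_\nu\psi_{\pm_2}\rangle$ again reveals the standard spinorial null structure (for $\pm_1=\pm_2$ one gains the angle between the two spatial frequencies), so it too is handled by Proposition~\ref{Prop.1.2}, and this is where $2r-s>-1$ and $3s-2r>\tfrac74$ come in. The term $I_{\mu\nu}(A,\psi)=-D_\beta J_\gamma(\psi)+D_\gamma J_\beta(\psi)$ contains, besides this bilinear, a derivative-loaded piece $\partial_\beta\langle\psi,\alpha_\gamma\psi\rangle$ --- one derivative has to be absorbed into the two spinor factors, which is the origin of $2l-r>\tfrac12$ --- and a cubic piece $[A_\beta,\langle\psi,\alpha_\gamma\psi\rangle]$ of the form $A\psi\psi$, for which Proposition~\ref{Prop.1.2'} is applied twice, first pairing the two spinors and then the potential. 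Assembling all of these bounds and feeding them, with their telescoped difference versions, into Proposition~\ref{Prop1.6} gives the existence, uniqueness, continuous dependence on the data and persistence of higher regularity asserted in Proposition~\ref{Prop.1}.
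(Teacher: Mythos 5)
Your overall architecture matches the paper's: reduce via Proposition~\ref{Prop1.6} to the multilinear estimates \eqref{3.1}--\eqref{3.5}, treat the pure Yang--Mills terms by the null-form reformulation of \cite{ST},\cite{Te} together with Propositions~\ref{Prop.1.2'} and \ref{Prop.1.2}, and extract the spinorial null form from the $\Pi_{\mp}\alpha^{\mu}\Pi_{\pm}$ part of $H_{i,\pm}$ via \eqref{2.7} and the angle estimate $|\Pi(\pm\xi_1)\Pi(\mp\xi_2)z|\lesssim|z|\,\angle(\pm\xi_1,\pm\xi_2)$.

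There is, however, a genuine gap in your treatment of the remainder of $H_{i,\pm}$. You assert that the residual piece $-\sum_{\pm}\bigl(A_0\psi_{\pm}+A_jR^j_{\pm}\psi_{\pm}\bigr)$ ``carries no null structure'' and ``must be estimated bare'' by Proposition~\ref{Prop.1.2'}. That bare estimate cannot close in the relevant range: one needs
$\|uv\|_{H^{l,-\frac14++}}\lesssim\|u\|_{H^{s,\frac34+}}\|v\|_{H^{l,\frac34+}}$, i.e.\ parameters $s_0=-l$, $s_1=s$, $s_2=l$, and the condition $s_0+s_1+s_2>1$ of Proposition~\ref{Prop.1.2'} (which is independent of the modulation exponents) forces $s>1$, excluding the whole regime $\frac34<s\le1$ that the theorem is about. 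The point of the paper's proof of \eqref{3.5} is precisely that this combination \emph{is} a null form: one Helmholtz-decomposes $\mathbf A=A^{df}+A^{cf}+\angles{\nabla}^{-2}\mathbf A$; the divergence-free part paired with $R^l_{\pm}\psi_{\pm}$ becomes a $Q_{ij}$-type null form $(\nabla w\times\tfrac{\nabla}{|\nabla|}\psi_{\pm})$ (the Klainerman--Machedon/Selberg--Tesfahun Maxwell--Klein--Gordon structure); and the Lorenz gauge converts $A^{cf}$ into $-\angles{\nabla}^{-2}\nabla\partial_tA_0$, so that the symbol of $A_{0_{\pm}}\psi_{\pm_1}-\tilde R^l_{\pm}A_{0_{\pm}}R^l_{\pm_1}\psi_{\pm_1}$ is bounded by $|\angle(\pm\xi,\pm_1\eta)|+\angles{\xi}^{-1}$, again an angular gain plus a harmless smoothing term. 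Only after this do the estimates \eqref{8}, \eqref{9} via Proposition~\ref{Prop.1.2} close under $s>\frac34$. (Two smaller misattributions: the source $J_{\nu}(\psi)$ is estimated in the paper \emph{without} any null structure, directly by Proposition~\ref{Prop.1.2'} under $2l-s>0$; and the conditions $2s-r>\frac32$, $3s-2r>\frac74$, $2r-s>-1$ arise from the Yang--Mills estimates \eqref{27}, \eqref{32}, \eqref{35}, not from the Dirac coupling.)
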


{\bf Remark 3:} In section 4 it is shown that the unique solution of Proposition \ref{Prop.1} fulfills the Lorenz gauge condition and also $F=F[A]$ provided the data fulfill the conditions (\ref{f1})-(\ref{f4}) and (\ref{Const}). This proves immediately that it is also the unique solution for the Cauchy problem for the Yang-Mills-Dirac system in Lorenz gauge, so that Theorem \ref{Theorem1.1} follows. \\[0.5em]

The decisive point now is,  that it is possible to replace the nonlinear terms under the assumption that the Lorenz gauge condition holds by nonlinearities containing null forms and prove the necessary estimates for these modified terms. This was done for the Yang-Mills system by \cite{ST} and \cite{Te}. For the nonlinear terms which contain $\psi$ this is done in the sequel as far as possible.

The standard null forms are given by
\begin{equation}\label{OrdNullforms}
\left\{
\begin{aligned}
Q_{0}(u,v)&=\partial_\alpha u \partial^\alpha v=-\partial_t u \partial_t v+\partial_i u \partial^j v,
\\
Q_{\alpha\beta}(u,v)&=\partial_\alpha u \partial_\beta v-\partial_\beta u \partial_\alpha v.
\end{aligned}
        \right.
\end{equation}
For $\mathfrak g$-valued $u,v$, define a commutator version of null forms by 
\begin{equation}\label{CommutatorNullforms}
\left\{
\begin{aligned}
  Q_0[u,v] &= [\partial_\alpha u, \partial^\alpha v] = Q_0(u,v) - Q_0(v,u),
  \\
  Q_{\alpha\beta}[u,v] &= [\partial_\alpha u, \partial_\beta v] - [\partial_\beta u, \partial_\alpha v] = Q_{\alpha\beta}(u,v) + Q_{\alpha\beta}(v,u).
\end{aligned}
\right.
\end{equation}

 Note the identity
\begin{equation}\label{NullformTrick}
  [\partial_\alpha u, \partial_\beta u]
  = \frac12 \left( [\partial_\alpha u, \partial_\beta u] - [\partial_\beta u, \partial_\alpha u] \right)
  = \frac12 Q_{\alpha\beta}[u,u].
\end{equation}

Define 
\begin{equation}\label{NewNull} 
  \mathcal{Q}[u,v] = - \frac12 Q_{jk}\left[\Lambda^{-1}(R^j u^k - R^k u^j),v\right]
  - Q_{0j}\left[R^j u_0, v \right],
\end{equation}
where 
$R_i = \Lambda^{-1}\partial_i $ is the Riesz transform.

We follow Tesfahun \cite{Te} in the following generalizing his 3-dimensional results to arbitrary dimension $n \ge 3$.

We split the spatial part $\mathbf A=(A_1,..., A_n)$ of the potential into divergence-free and curl-free parts and a smoother part:
\begin{equation}\label{SplitA}  
\mathbf A =  A^{df} + A^{cf} + \angles{\nabla}^{-2} \mathbf A,
\end{equation}
where
\begin{align*}
  (A^{df})^j &= R^k(R_j A_k - R_k A_j) \, ,
  \\
  (A^{cf})^j &= -R_j R_k A^k \, .
\end{align*}

\begin{lemma}
\label{Lemma1Te}
(cf. \cite{Te},Lemma 1)
In the Lorenz gauge we have the identities

\begin{align}
\label{2.5}
[A^{\alpha},\partial_{\alpha} \phi] &= \mathcal{Q} [\Lambda^{-1} \mathbf A, \phi] + [\Lambda^{-2} A^{\alpha},\partial_{\alpha} \phi ] \, , \\
\label{2.6}
[\partial_t A^{\alpha}, \partial_{\alpha} \phi] &= Q_{0i}[A^i,\phi] \, .
\end{align}
\end{lemma}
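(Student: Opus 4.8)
The plan is to verify the two identities \eqref{2.5} and \eqref{2.6} by direct computation, exploiting only the Lorenz gauge condition $\partial^\alpha A_\alpha = 0$ and the algebraic definitions of $\mathcal{Q}$, the splitting \eqref{SplitA}, and the commutator null forms \eqref{CommutatorNullforms}. I would begin with \eqref{2.6} since it is the simpler of the two: write out $[\partial_t A^\alpha,\partial_\alpha\phi] = [\partial_t A^0,\partial_0\phi] + [\partial_t A^i,\partial_i\phi]$, and on the first summand use $\partial_0 = \partial_t$ together with the Lorenz condition in the form $\partial_t A^0 = \partial_0 A^0 = -\partial^i A_i = \partial_i A^i$ (recalling the metric is $\mathrm{diag}(-1,1,1,1)$, so $A^0 = -A_0$ and one must track the sign carefully). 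This converts $[\partial_t A^0,\partial_t\phi]$ into $[\partial_i A^i,\partial_t\phi]$, and combining with $[\partial_t A^i,\partial_i\phi]$ one recognizes the antisymmetrized structure $[\partial_0 A^i,\partial_i\phi] - [\partial_i A^i, \partial_0\phi] = Q_{0i}[A^i,\phi]$ after relabeling, using the definition $Q_{0i}[u,v] = [\partial_0 u,\partial_i v] - [\partial_i u,\partial_0 v]$.

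For \eqref{2.5} I would decompose the left side as $[A^0,\partial_0\phi] + [A^i,\partial_i\phi]$ and then insert the splitting $\mathbf A = A^{df} + A^{cf} + \angles{\nabla}^{-2}\mathbf A$ into the spatial piece. The term $[\angles{\nabla}^{-2}\mathbf A, \partial_i\phi]$ together with $[A^0,\partial_0\phi]$ should assemble into the smoother remainder $[\Lambda^{-2}A^\alpha,\partial_\alpha\phi]$ — here I need to check that $A^0$ also carries the extra smoothing, which follows because the Lorenz gauge $\partial_t A^0 = \partial_i A^i$ lets one express $A^0$ (or rather the relevant contribution) through spatial derivatives, though more likely the intended reading is that $A^0$ is simply grouped into the $\Lambda^{-2}$ term at the cost of being handled separately; I would follow Tesfahun's bookkeeping here. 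The main work is showing that the divergence-free and curl-free parts combine to give exactly $\mathcal{Q}[\Lambda^{-1}\mathbf A,\phi]$. For $A^{cf}$ one uses $\partial^\alpha A_\alpha = 0$, i.e. $\partial_i A^i = \partial_t A^0$, so the curl-free part $(A^{cf})^j = -R_jR_k A^k = -\Lambda^{-2}\partial_j\partial_k A^k$ is controlled; for $A^{df}$ one expands $(A^{df})^j = R^k(R_jA_k - R_kA_j) = \Lambda^{-2}\partial^k(\partial_j A_k - \partial_k A_j)$ and matches $[(A^{df})^j,\partial_j\phi]$ against the $Q_{jk}[\Lambda^{-1}(R^ju^k - R^ku^j),\cdot]$ term in \eqref{NewNull}, while the $Q_{0j}[R^ju_0,\cdot]$ term in $\mathcal{Q}$ accounts for the $[A^0,\partial_0\phi]$ contribution via the gauge relation. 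The computation reduces to symbol-level identities among $R^j = \Lambda^{-1}\partial_j$, $\Lambda^{-1}$, and the definitions of $Q_{jk}, Q_{0j}$, so it is essentially algebraic once everything is in Fourier variables.

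The hard part — or at least the part requiring the most care — will be the sign and index bookkeeping: distinguishing $A^\alpha$ from $A_\alpha$ under the Minkowski metric, keeping track of whether $R^j_\pm$, $R_j$, and $\partial^j$ raise or lower with which sign, and making sure the antisymmetrizations in $Q_{\alpha\beta}[\cdot,\cdot] = [\partial_\alpha\cdot,\partial_\beta\cdot] - [\partial_\beta\cdot,\partial_\alpha\cdot]$ are consistent with the factor $-\tfrac12$ in \eqref{NewNull} and the factor in \eqref{NullformTrick}. Since the statement is quoted verbatim as Lemma 1 of \cite{Te} (merely restated for general dimension $n\ge 3$, which changes nothing in the identities themselves since they are pointwise algebraic), I would largely mirror that proof, and in the write-up I would present the decomposition of $[A^\alpha,\partial_\alpha\phi]$ into the three pieces, invoke the Lorenz gauge exactly once for each of \eqref{2.5} and \eqref{2.6}, and leave the remaining Fourier-symbol matching to the reader as a routine check, citing \cite{Te} for the details.
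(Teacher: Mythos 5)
Your proposal follows essentially the same route as the paper's proof: split $\mathbf A$ as in (\ref{SplitA}), use the Lorenz gauge on the time component, and match the resulting pieces against the two null forms in the definition (\ref{NewNull}) of $\mathcal Q$; the derivation of (\ref{2.6}) is likewise identical. One point where your description wavers and would derail the argument if taken literally: the term $[A^0,\partial_0\phi]$ does \emph{not} assemble into the smoother remainder $[\Lambda^{-2}A^\alpha,\partial_\alpha\phi]$, since $A_0$ itself carries no extra smoothing. The actual mechanism (which you do eventually name correctly in passing) is to write $A_0=-\partial_j(\Lambda^{-1}R^jA_0)+\Lambda^{-2}A_0$ and to use the gauge relation $\partial_t A_0=\Lambda R_kA^k$ to rewrite $A^{cf}\cdot\nabla\phi=-\partial_t(\Lambda^{-1}R_jA_0)\,\partial^j\phi$, so that $-A_0\partial_t\phi+A^{cf}\cdot\nabla\phi=-Q_{0j}(\Lambda^{-1}R^jA_0,\phi)-\Lambda^{-2}A_0\,\partial_t\phi$; only the last piece joins $\Lambda^{-2}\mathbf A\cdot\nabla\phi$ in the remainder, while the main part of $[A^0,\partial_0\phi]$ is absorbed into the $Q_{0j}$ null form. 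With that clarified, your outline coincides with the paper's proof.
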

\begin{proof}
Writing
$$ A^{\alpha} \partial_{\alpha} \phi = (-A_0 \partial_t \phi + A^{cf} \cdot \nabla \phi) + A^{df} \cdot \nabla \phi + \Lambda^{-2} \mathbf{A} \cdot \nabla \phi $$
one easily checks using the Lorenz gauge $\partial_t A_0 = \Lambda R_k A^k$ :
\begin{align*}
A^{cf} \cdot \nabla \phi &= - R_j R_k A^k \partial^j \phi = -\partial_t \Lambda^{-1} R_j A_0 \partial^j \phi \\
A_0 \partial_t \phi & = -\Lambda^{-2} \partial_j \partial^j A_0 \partial_t \phi + \Lambda^{-2} A_0 \partial_t \phi \\
& = - \partial_j(\Lambda^{-1} R^j A_0) \partial_t \phi + \Lambda^{-2} A_0 \partial_t \phi \, , 
\end{align*}
so that
$$ -A_0 \partial_t \phi + A^{cf} \cdot \nabla \phi = - Q_{0j}(\Lambda^{-1} R^j A_0,\phi) - \Lambda^{-2} A_0 \partial_t \phi \, . $$
Next
\begin{align*}
A^{df} \cdot \nabla \phi &= R^k(R_j A_k - R_k A_j)\partial^j \phi \\
&= \Lambda^{-2} \partial^k \partial_j A_k \partial^j \phi + A_j \partial^j \phi \\
&= -\half \left( \Lambda^{-2}(\partial_j \partial^j A_k - \partial_j \partial_k A^j)\partial^k \phi - \Lambda^{-2}(\partial^k \partial_j A_k - \partial_k \partial^k A_j) \partial^j \phi \right) \\
&= -\half \left(\partial_j \Lambda^{-1}(R^j  A_k - R_k A^j)\partial^k \phi - \partial^k \Lambda^{-1}(R_j A_k - R_k  A_j) \partial^j \phi \right) \\
&= - \half Q_{jk}(\Lambda^{-1}(R^j A^k - R^k A^j),\phi) \, .
\end{align*}
This leads to (\ref{2.5}). For (\ref{2.6}) we use the Lorenz gauge to obtain
\begin{align*}
[\partial_t A^{\alpha}, \partial_{\alpha} \phi] = [-\partial_t A_0,\partial_t \phi] + [\partial_t
 A^i, \partial_i \phi] 
= - [\partial_i A^i, \partial_i \phi] + [\partial_t A^i, \partial_i \phi]= Q_{0i}[A^i,\phi] \, .
\end{align*}
\end{proof}

\begin{lemma}
\label{Lemma2Te}
(cf. \cite{Te},Lemma 2)
In the Lorenz gauge the following identity holds:
$$ [A^{\alpha} , \partial_{\beta} A_{\alpha}] = \sum_{i=1}^4 \Gamma_{\beta}^i(A,\partial A,F,\partial F) \, , $$
where 
\begin{align*}
\Gamma_{\beta}^1 &= -[A_0,\partial_{\beta} A_0] + [\Lambda^{-1}R_j(\partial_t A_0), \Lambda^{-1} R^j \partial_t (\partial_{\beta} A_0)] \, , \\
\Gamma_{\beta}^2 &= \sum_{i,j} Q_{ij}[\Lambda^{-1}R_k A^k, \Lambda^{-1} R_j \partial_{\beta} A_i] + \sum_{i,j} Q_{ij}[\Lambda^{-1} R_k \partial_{\beta}A^k, \Lambda^{-1} R_j A_i] \, , \\
\Gamma_{\beta}^3 &= \sum_j \Big([\Lambda^{-1} R^i F_{ji},\Lambda^{-1} R^k \partial_{\beta} F_{jk}] + [\Lambda^{-1} R^i F_{ji}, \Lambda^{-1} \partial_{\beta} R^k[A_k,A_j]] \\
& \hspace{1em} + [\Lambda^{-1} R^i [A_i,A_j], \Lambda^{-1} \partial_{\beta} R^k F_{jk}] + [\Lambda^{-1}R^i [A_i,A_j], \Lambda^{-1} \partial_{\beta} R^k [A_k,A_j]] \Big) \, , \\
\Gamma_{\beta}^4 &= [\Lambda^{-2} \mathbf{A}, \partial_{\beta} \mathbf{A}] +[\mathbf{A}, \Lambda^{-2} \partial_{\beta} \mathbf{A}] \, .
\end{align*}
\end{lemma}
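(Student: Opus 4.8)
The plan is to establish the identity by direct computation: expand $[A^\alpha,\partial_\beta A_\alpha]$, insert the Hodge-type splitting \eqref{SplitA} of the spatial potential into both factors, and sort the resulting terms according to which of the three constituents $A^{df}$, $A^{cf}$, $\langle\nabla\rangle^{-2}\mathbf A$ occupies each slot. By the metric signature $[A^\alpha,\partial_\beta A_\alpha]=-[A_0,\partial_\beta A_0]+\sum_i[A^i,\partial_\beta A_i]$, and writing $A^i=(A^{df})^i+(A^{cf})^i+\Lambda^{-2}A^i$ (and likewise for $\partial_\beta A_i$) and multiplying out, one obtains nine spatial commutators; those that contain a $\Lambda^{-2}$-factor combine into $\Gamma_\beta^4=[\Lambda^{-2}\mathbf A,\partial_\beta\mathbf A]+[\mathbf A,\Lambda^{-2}\partial_\beta\mathbf A]$, which is admissible since they carry two extra derivatives of smoothing, and there remain the self-interactions $A^{cf}\!\times\! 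A^{cf}$ and $A^{df}\!\times\! A^{df}$ and the mixed interaction $A^{df}\!\times\! A^{cf}$.

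The curl-free self-interaction I would handle via the Lorenz gauge: $\partial^\alpha A_\alpha=0$ gives $\partial_t A_0=\Lambda R_kA^k$, hence $(A^{cf})^j=-R_jR_kA^k=-\Lambda^{-1}R^j\partial_t A_0$, and since $\partial_\beta$ commutes with the spatial multipliers $\Lambda^{-1}R^j$ the term $[(A^{cf})^i,\partial_\beta(A^{cf})_i]$ is exactly $[\Lambda^{-1}R_j\partial_t A_0,\Lambda^{-1}R^j\partial_t(\partial_\beta A_0)]$, which together with $-[A_0,\partial_\beta A_0]$ constitutes $\Gamma_\beta^1$. The divergence-free self-interaction I would rewrite using $R_iA_j-R_jA_i=\Lambda^{-1}(\partial_iA_j-\partial_jA_i)=\Lambda^{-1}(F_{ij}-[A_i,A_j])$, which by the definition of $A^{df}$ yields $(A^{df})^j=\Lambda^{-1}R^iF_{ji}+\Lambda^{-1}R^i[A_i,A_j]$; expanding $[(A^{df})^i,\partial_\beta(A^{df})_i]$ bilinearly in these two summands (commuting $\partial_\beta$ through the Fourier multipliers as before) reproduces the four commutators in $\Gamma_\beta^3$. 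Finally, the mixed interaction $[(A^{df})^i,\partial_\beta(A^{cf})_i]+[(A^{cf})^i,\partial_\beta(A^{df})_i]$ is where the null structure surfaces: after substituting the curl-free representation and collapsing the internal Riesz sums via $\sum_jR_jR_j=\Lambda^{-2}\Delta=\Lambda^{-2}-1$ and $\sum_jR_jA^j=\Lambda^{-1}\mathrm{div}\,\mathbf A$, one reorganizes the commutators into the antisymmetrized pattern $[\partial_i\,\cdot\,,\partial_j\,\cdot\,]-[\partial_j\,\cdot\,,\partial_i\,\cdot\,]$ defining $Q_{ij}[\,\cdot\,,\,\cdot\,]$, thereby producing $\Gamma_\beta^2$ — its two summands corresponding to $\partial_\beta$ falling on the divergence-free, respectively the curl-free, factor — while any leftover commutators that still carry a $\Lambda^{-2}$ are absorbed into $\Gamma_\beta^4$.

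The step I expect to be the main obstacle is precisely this last identification of the mixed terms with $\Gamma_\beta^2$: it hinges on exploiting that, up to the $\langle\nabla\rangle^{-2}$-corrections built into the modified Riesz transforms $R_i=\Lambda^{-1}\partial_i$, the divergence-free part is genuinely divergence free and the curl-free part a pure gradient, and on tracking carefully which residual commutators are actually smoother — so may be dumped into $\Gamma_\beta^4$ — and which retain the genuine $Q_{ij}$ null form. Everything else is routine algebra once the two structural inputs are in place: the Lorenz gauge identity $\partial_t A_0=\Lambda R_kA^k$ that exhibits $A^{cf}$ through $A_0$, and the curvature identity $\partial_iA_j-\partial_jA_i=F_{ij}-[A_i,A_j]$ that exhibits the curl of $A^{df}$ through $F$ and the quadratic term.
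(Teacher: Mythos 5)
Your proposal is correct and follows essentially the same route as the paper: split $\mathbf A$ into $A^{df}+A^{cf}+\Lambda^{-2}\mathbf A$, use the Lorenz gauge identity $\partial_t A_0=\Lambda R_kA^k$ for the curl-free self-interaction, the curvature identity $\partial_jA_i-\partial_iA_j=F_{ji}-[A_j,A_i]$ for the divergence-free self-interaction, and recognize the $Q_{ij}$ null form in the mixed terms. The only cosmetic difference is in the mixed cross-terms, where the paper needs no contraction of Riesz symbols or leftover $\Lambda^{-2}$-remainders --- a plain relabeling of the summation indices already exhibits the antisymmetric combination defining $Q_{ij}$ exactly.
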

\begin{proof}
We write
$$ [A^{\alpha} , \partial_{\beta} A_{\alpha}] = -[A_0,\partial_{\beta} A_0] + [A^j,\partial_{\beta}A_j] = \sum_{i=1}^4 \Gamma_{\beta}^i(A,\partial A,F,\partial F) \, , $$
where
\begin{align*}
\Gamma_{\beta}^1 &= - [A_0,\partial_{\beta} A_0] + [A^{cf},\partial_{\beta} A^{cf}] \, , \\
\Gamma_{\beta}^2 &= [A^{cf},\partial_{\beta} A^{df}] + [A^{df},\partial_{\beta} A^{cf} ] \, , \\
\Gamma_{\beta}^3 &= [A^{df},\partial_{\beta} A^{df}] \, 
\end{align*}
and $\Gamma_{\beta}^4$ as above.

For $\Gamma_{\beta}^1$ we use $\partial_t A_0 = \Lambda R_k A^k$ and obtain
\begin{align*}
-A_0 \partial_{\beta} A_0 + A^{cf} \partial_{\beta} A^{cf} &= -A_0 \partial_{\beta}A_0 + R_j R_k A^k \partial_{\beta} R^j R_k A^k \\
&= -A_0 (\partial_{\beta} A_0) + \Lambda^{-1} R_j (\partial_t A_0)  \Lambda^{-1} \partial_t(\partial_{\beta} A_0) \, ,
\end{align*}  
which gives the result. Concerning $\Gamma_{\beta}^2$ we obtain
\begin{align*}
A^{cf} \partial_{\beta} A^{df} & = -R^j (R_k A^k) \partial_{\beta} R^i(R_j A_i-R_i A_j) \\
& = -R^j(R_k A^k) R^i(\partial_{\beta} R_j A_i) + R^i(R_k A^k) R^j (\partial_{\beta} R_j A_i) \\
&= \sum_{i,j} Q_{ij}(\Lambda^{-1}R_k A^k,\Lambda^{-1}R_j \partial_{\beta} A_i) \, ,
\end{align*}
which gives the claimed result. For $\Gamma_{\beta}^3$ we use 
$$ F_{ji} := \partial_j A_i - \partial_i A_j + [A_j,A_i] \, , $$
so that
$$ (A^{df})_j = R^i(R_j A_i - R_i A_j) = \Lambda^{-1} R^i F_{ji} + R^i(A_i A_j - A_j A_i) \, . $$
This implies
\begin{align*}
&(A^{df})^j \partial_{\beta} A^{df}_j \\
& = \sum_j \Lambda^{-1} (R^i F_{ji} + R^i(A_iA_j-A_j A_i)) \partial_{\beta} \Lambda^{-1} (R^k F_{jk} + R^k(A_k A_j - A_j A_k)) \\
& = \sum_j \Big(\Lambda^{-1}R^i F_{ji} \Lambda^{-1} R^k \partial_{\beta} F_{jk} + \Lambda^{-1} R^i F_{ji}\Lambda^{-1} \partial_{\beta} R^k ( A_k A_j - A_j A_k) \\
& \hspace{3em}+ \Lambda^{-1} R^i (A_i A_j - A_j A_i) \Lambda^{-1} \partial_{\beta} R^k F_{jk} \\
& \hspace{3em} +  \Lambda^{-1} R^i (A_i A_j - A_j A_i) \Lambda^{-1} \partial_{\beta} R^k (A_k A_j - A_j A_k) \Big) \, .
\end{align*}
Thus we obtain the claimed result.
\end{proof}

The null forms above satisfy the following estimates.
\begin{lemma}
\label{Lemma2.2}
The following estimates hold for $0 \le \alpha \le 1 $ and $Q=Q_{0i}$ or $Q=Q_{ij}$:
\begin{align}
\label{41}
Q_0(u,v) &\precsim D_+^{1-\alpha} D_-^{1-\alpha} (D_+^{\alpha} u D_+^{\alpha} v) + (D_+ D_-^{1-\alpha} u)(D_+^{\alpha} v) + (D_+^{\alpha} u)(D_+ D_-^{1-\alpha} v) \\
\nonumber
Q_0(u,v) &\precsim  D_-^{1-\alpha} (D_+ u D_+^{\alpha} v) +   D_-^{1-\alpha} (D_+^{\alpha} u D_+ v)   \\
 \label{41'} &  \hspace{1em}  + (D_+ D_-^{1-\alpha} u)(D_+^{\alpha} v) + (D_+^{\alpha} u)(D_+ D_-^{1-\alpha} v) \\
\label{42}
Q(u,v) &\precsim D_+^\half D_-^\half (D_+^\half u D_+^\half v) + D_+^\half (D_+^\half D_-^\half u D_+^\half v) + D_+^\half (D_+^\half u D_+^\half D_-^\half v) \\
\nonumber
Q(u,v) &\precsim D_+^{\half-2\epsilon} D_-^{\half-2\epsilon} (D_+^{\half+2\epsilon} u D_+^{\half+2\epsilon} v)
+ D_+^{\half -2\epsilon}(D_+^{\half+2\epsilon} D_-^{\half-2\epsilon} u D_+^{\half+2\epsilon} v ) \\
\label{42'}
& \hspace{1em} + D_+^{\half -2\epsilon}(D_+^{\half+2\epsilon} u D_+^{\half+2\epsilon} D_-^{\half-2\epsilon} v) \\
\nonumber
Q(u,v) &\precsim  D_-^{\half-2\epsilon} (D_+ u D_+^{\half+2\epsilon} v) + D_-^{\half-2\epsilon} (D_+^{\half+2\epsilon} u D_+ v)
+ (D_+ D_-^{\half-2\epsilon} u)( D_+^{\half+2\epsilon} v ) \\
\nonumber
& \hspace{1em} + (D_+^{\half+2\epsilon}  u)( D_+ D_-^{\half-2\epsilon}v )
+ (D_+^{\half+2\epsilon}D_-^{\half-2\epsilon}  u)( D_+ v ) \\
& \hspace{1em}  +(D_+  u)( D_+^{\half+2\epsilon} D_-^{\half-2\epsilon} v )
\label{42''}
\end{align}
\end{lemma}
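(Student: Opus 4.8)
\textbf{Proof plan for Lemma \ref{Lemma2.2}.}

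The plan is to prove each estimate by comparing symbols on the Fourier side, exploiting the elementary algebraic fact that the symbol of a null form $Q_0$ or $Q_{\alpha\beta}$ acting on a product of two frequency-localized pieces is controlled by the angle between the spatial frequencies, which in turn is controlled by the modulations. Concretely, for $u,v$ with space-time frequencies $(\tau_1,\xi_1)$ and $(\tau_2,\xi_2)$ and output frequency $(\tau_0,\xi_0)=(\tau_1+\tau_2,\xi_1+\xi_2)$, one has the standard pointwise bounds
\begin{equation*}
 |\mathfrak{q}_0(\xi_1,\xi_2)| \lesssim |\xi_1||\xi_2|\,\theta(\xi_1,\xi_2)^2 , \qquad
 |\mathfrak{q}_{\alpha\beta}(\xi_1,\xi_2)| \lesssim |\xi_1||\xi_2|\,\theta(\xi_1,\xi_2),
\end{equation*}
together with the angle estimate $\theta(\xi_1,\xi_2)^2 \lesssim \frac{\mathfrak{h}}{\min(|\xi_1|,|\xi_2|)}$ where $\mathfrak{h} := \max\big(||\tau_0|-|\xi_0||,\, ||\tau_1|-|\xi_1||,\, ||\tau_2|-|\xi_2||\big)$ is the largest of the three hyperbolic weights (this is where the two input factors have been replaced by $\pm$-homogeneous waves; for the $Q_0$ bound one needs in addition $\theta^2 \lesssim \mathfrak h\big(\tfrac{1}{|\xi_1|}+\tfrac1{|\xi_2|}+\tfrac1{|\xi_0|}\big)$, distributing the largest modulation onto whichever factor it sits on). These are exactly the null-form symbol computations underlying Proposition \ref{Prop.1.2} and are by now classical (Klainerman--Machedon, Selberg); I would simply invoke them.

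First I would record the symbol identities above. Then, for \eqref{42}, write $|\mathfrak q_{\alpha\beta}(\xi_1,\xi_2)| \lesssim |\xi_1|^{1/2}|\xi_2|^{1/2}\,|\xi_1|^{1/2}|\xi_2|^{1/2}\theta \lesssim |\xi_1|^{1/2}|\xi_2|^{1/2}\cdot |\xi_1|^{1/2}|\xi_2|^{1/2}\cdot\frac{\mathfrak h^{1/2}}{\min(|\xi_1|,|\xi_2|)^{1/2}}$, and distribute: the factor $|\xi_1|^{1/2}|\xi_2|^{1/2}$ outside becomes $D_+^{1/2}$ on the product; the remaining $|\xi_1|^{1/2}|\xi_2|^{1/2}\min(|\xi_1|,|\xi_2|)^{-1/2} = \max(|\xi_1|,|\xi_2|)^{1/2}$, which we bound by $|\xi_0|^{1/2}$ (valid after the extra $D_+^{1/2}$ on the output — this produces the leading term $D_+^{1/2}D_-^{1/2}(D_+^{1/2}u\,D_+^{1/2}v)$ once the $\mathfrak h^{1/2}$ is placed on the output as $D_-^{1/2}$) or by $\max(|\xi_1|,|\xi_2|)^{1/2}\le |\xi_1|^{1/2}+|\xi_2|^{1/2}$ and then place $\mathfrak h^{1/2}$ on whichever input factor carries the largest modulation — this produces the two remaining terms $D_+^{1/2}(D_+^{1/2}D_-^{1/2}u\,D_+^{1/2}v)$ and $D_+^{1/2}(D_+^{1/2}u\,D_+^{1/2}D_-^{1/2}v)$. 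Here I use throughout that $\langle|\tau_0|-|\xi_0|\rangle \lesssim \langle\tau_0\pm|\xi_0|\rangle$ and that $\mathfrak h$ being the largest modulation means it is $\lesssim$ a sum of the three individual hyperbolic weights, one term per summand in the output. Estimate \eqref{42'} is the same computation with the split $\theta = \theta^{1-4\epsilon}\cdot\theta^{4\epsilon}$ replaced by keeping only the part needed: write $|\xi_1|^{1/2}|\xi_2|^{1/2}\theta$ as $|\xi_1|^{1/2+2\epsilon}|\xi_2|^{1/2+2\epsilon}\cdot\big(|\xi_1||\xi_2|\big)^{-2\epsilon}\cdot|\xi_1|^{1/2}|\xi_2|^{1/2}\theta$ is not quite it — rather, use $\theta^{1-4\epsilon}\lesssim (\mathfrak h/\min)^{(1-4\epsilon)/2}$ and $\theta^{4\epsilon}\le 1$, distributing $D_+^{1/2-2\epsilon}$ outside and $D_-^{1/2-2\epsilon}$ on the heaviest factor, and absorbing the now-milder $\max(|\xi_1|,|\xi_2|)^{1/2-2\epsilon}\cdot|\xi_1|^{2\epsilon}|\xi_2|^{2\epsilon}$ into $D_+^{1/2+2\epsilon}$ on each input plus $D_+^{1/2-2\epsilon}$ on the output. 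The $Q_0$ estimates \eqref{41}, \eqref{41'} are identical with exponent $2$ in place of $1$ for the $\theta$-power and the parameter $\alpha$ in place of the fixed $1/2$: split $\theta^2 \lesssim \mathfrak h^{1-\alpha}\cdot\mathfrak h^{?}$... concretely bound $|\mathfrak q_0|\lesssim |\xi_1||\xi_2|\theta^2\lesssim |\xi_1|^\alpha|\xi_2|^\alpha\cdot |\xi_1|^{1-\alpha}|\xi_2|^{1-\alpha}\theta^2$ and use $\theta^{2(1-\alpha)}\lesssim \mathfrak h^{1-\alpha}(\tfrac{1}{|\xi_0|}+\tfrac1{|\xi_1|}+\tfrac1{|\xi_2|})^{1-\alpha}$ to obtain the three terms of \eqref{41}, while for \eqref{41'} one keeps $\theta^2\lesssim \mathfrak h^{1-\alpha}\min(|\xi_1|,|\xi_2|)^{-(1-\alpha)}$ and bounds $\min^{-(1-\alpha)}|\xi_1|^{1-\alpha}|\xi_2|^{1-\alpha}=\max^{1-\alpha}$, absorbed into a $D_+$ (note the output exponent becomes $D_+$ rather than $D_+^{1-\alpha}$, hence the inner $D_+u$ in \eqref{41'}), placing $D_-^{1-\alpha}$ on the heaviest factor. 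Finally \eqref{42''} is the $Q=Q_{ij}$ or $Q_{0i}$ analogue of \eqref{41'}: write $|\mathfrak q|\lesssim |\xi_1||\xi_2|\theta$, split $\theta=\theta^{2\cdot(1/4-\epsilon)}\cdot\theta^{2\epsilon}\cdots$ — more directly, $|\xi_1||\xi_2|\theta \lesssim |\xi_1|^{1/2+2\epsilon}|\xi_2|^{1/2+2\epsilon}\cdot|\xi_1|^{1/2-2\epsilon}|\xi_2|^{1/2-2\epsilon}\theta$ and $\theta^{2}\lesssim \mathfrak h^{1-4\epsilon}\cdot\theta^{4\epsilon}/\min^{1-4\epsilon}$ gives $\theta\lesssim \mathfrak h^{(1-4\epsilon)/2}\min^{-(1-4\epsilon)/2}$ (using $\theta\le 1$ for the leftover), so $|\xi_1|^{1/2-2\epsilon}|\xi_2|^{1/2-2\epsilon}\theta \lesssim D_-^{1/2-2\epsilon}$ times $\max^{1/2-2\epsilon}\lesssim |\xi_1|^{1/2-2\epsilon}+|\xi_2|^{1/2-2\epsilon}$ absorbed... wait: this gives the input exponents $1/2+2\epsilon$ plus an extra $1/2-2\epsilon = $ almost $1$, i.e. the $D_+ u$ factors — exactly matching the six terms, three from placing $\mathfrak h$ on input $1$, input $2$, or (via $\min^{-1}\max^{1}$ redistribution, giving $D_+D_-^{\cdots}$ on the lighter one) covering all cases.

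The main obstacle is bookkeeping: one must be systematic about which of the three hyperbolic weights $||\tau_j|-|\xi_j||$ is the largest, and in each of the three cases route the $\theta$-gain, the $\min/\max$ frequency redistribution, and the leftover $D_+$ powers to the correct factor, so that the sum over the three cases is exactly the displayed right-hand side. The genuinely new input beyond \cite{ST}, \cite{Te} is the variable exponent $\alpha$ in \eqref{41}, \eqref{41'} and the $\epsilon$-perturbed exponents in \eqref{42'}, \eqref{42''}, which are needed later to gain a bit of modulation regularity on $F$ at the cost of spatial regularity; but the derivation is the same interpolated symbol bound, and I would present it once in full and indicate the trivial modifications for the remaining five estimates. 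No delicate analysis (no Strichartz, no $L^2$ bilinear bounds) enters here — these are purely symbol-comparison estimates of the form ``$Q(u,v)$ is pointwise-on-the-Fourier-side dominated by such-and-such elementary bilinear operators applied to $u,v$''.
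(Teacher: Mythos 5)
Your proposal is correct in substance, but it re-derives by hand what the paper's proof simply cites: the paper quotes \eqref{41} as Lemma 7.6 of \cite{KS} and \eqref{42} as Proposition 1 of \cite{KMBT}, obtains \eqref{42'} by interpolating the KMBT symbol bound against the trivial bound $q \lesssim (|\tau|+|\xi|)(|\lambda|+|\eta|)$, and deduces \eqref{41'} and \eqref{42''} from \eqref{41} and \eqref{42'} in one line via the fractional Leibniz rule $D_+^{s}(w_1w_2)\precsim (D_+^{s}w_1)w_2+w_1(D_+^{s}w_2)$ for $s\ge 0$ applied to the outer $D_+$ factors. Your direct symbol bookkeeping for \eqref{41'} and \eqref{42''} would work, but it is exactly where your sketch becomes garbled, and the Leibniz route is both shorter and safer. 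One caveat on your displayed angle bound: as literally written, $\theta(\xi_1,\xi_2)^2\lesssim \mathfrak{h}/\min(|\xi_1|,|\xi_2|)$ is false for nearly antipodal spatial frequencies carried by waves of opposite temporal sign, where all three modulations can vanish while $\theta\to\pi$. The null form is saved there because its symbol is $|\xi_1||\xi_2||\sin\theta|$, and one must pair the $(1-\cos\theta)$ estimate with $(1+\cos\theta)\lesssim |\xi_0|\min(|\xi_1|,|\xi_2|)/(|\xi_1||\xi_2|)$; this second factor is precisely the source of the outer $D_+^{\half}$ (the $|\xi_0|^{1/2}$) in \eqref{42}, and it is exactly what the cited KMBT proposition packages. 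Since you state you would invoke those classical symbol computations rather than your shortcut bound, this is a presentational slip rather than a gap, but it should be fixed if the argument is written out.
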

\begin{proof}
(\ref{41}) is Lemma 7.6 in \cite{KS}, and (\ref{42}) follows immediately from \cite{KMBT}, Prop. 1.
(\ref{42'}) follows by interpolating the estimate for the symbol $q=q(\tau,\xi,\lambda,\eta)$ of \cite{KMBT}, Prop. 1 which led to (\ref{42}) with its trivial bound $ q \lesssim (|\tau|+|\xi|)(|\lambda| + |\eta|) $.  (\ref{41'}) and (\ref{42''}) follow by the fractional Leibniz rule for $\Lambda_+$ and $D_+$ from (\ref{41}) and (\ref{42'}), respectively.
\end{proof}

Next we consider the term $\Gamma_{\beta}^1$ . We may ignore its matrix form and treat 
$$\Gamma^1_k(A_0,, \partial_k A_0)
=-A_0 (\partial_k A_0) + 
\Lambda^{-1} R_j (\partial_t A_0)  \Lambda^{-1} R^j  \partial_t (\partial_k A_0))$$
for $k=1,...,n$ and
\begin{align*}
\Gamma^1_0(A_0,, \partial^i A_i)
&=-A_0 (\partial_0 A_0) + 
\Lambda^{-1} R_j (\partial_t A_0)  \Lambda^{-1} R^j  \partial_t (\partial_0 A_0)) \\
& = -A_0 (\partial^i A_i) + 
\Lambda^{-1} R_j (\partial_t A_0)  \Lambda^{-1} R^j  \partial_t (\partial^i A_i)) \,,
\end{align*}
where we used the Lorenz gauge $\partial_0 A_0 = \partial^i A_i$ in the last line in order to eliminate one time derivative. Thus we have to consider
$$ \Gamma^1(u,v) = -uv + \Lambda^{-1} R_j (\partial_t u) \Lambda^{-1} R^j(\partial_t v) \, , $$
where $u=A_0$ and $v=\partial^i A_i$ or $v=\partial_k A_0$ .

The proof of the following theorem was essentially given by Tesfahun \cite{Te}. In fact the detection of this null structure was the main progress of his paper over Selberg-Tesfahun \cite{ST}.  
\begin{lemma}
\label{Lemma2.1}
The following estimates hold:
\begin{align}
\label{45'}
\Gamma^1(u,v) &\precsim \Gamma^1_1(u,v) + \Gamma^1_2(u,v) + (\Lambda^{-2}u)v + u(\Lambda^{-2}v) \, , \\
\label{45''}
\Gamma^1(u,v) &\precsim uv + \Gamma^1_2(u,v)  \, , 
\end{align}
where
\begin{align}
\nonumber
\Gamma^1_1(u,v)&= D^{\half -2\epsilon} D_-^{\half-2\epsilon} ( D^{-\half +2\epsilon} u \, D^{-\half +2\epsilon} v) + D^{\half-2\epsilon}(D^{-\half +2\epsilon} D_-^{\half -2\epsilon} u D^{-\half+2\epsilon} v ) \\
 \label{43}
& \; \; + D^{\half-2\epsilon}(D^{-\half +2\epsilon}  u \,D^{-\half+2\epsilon} D_-^{\half -2\epsilon} v ) \\ \nonumber
\Gamma^1_2(u,v) &= D_+^{\half -2\epsilon} D_-^{\half-2\epsilon} ( D_+^{\half +2\epsilon} \Lambda^{-1} u D_+^{\half +2\epsilon}\Lambda^{-1} v) + D_+ D_-^{\half-2\epsilon} \Lambda^{-1} u\, D_+^{\half+2\epsilon} \Lambda^{-1}v \\
 \label{44}
& \;\; + D_+^{\half+2\epsilon} \Lambda^{-1}  u \,D_+ D_-^{\half-2\epsilon} \Lambda^{-1} v \\ 
\nonumber
&\lesssim  D_-^{\half-2\epsilon} ( D_+ \Lambda^{-1} u D_+^{\half +2\epsilon}\Lambda^{-1} v) +
 D_-^{\half-2\epsilon} ( D_+^{\half +2\epsilon} \Lambda^{-1} u D_+ \Lambda^{-1} v) \\ 
\label{45}
& \;\; + D_+ D_-^{\half-2\epsilon} \Lambda^{-1} u\, D_+^{\half+2\epsilon} \Lambda^{-1}v
 + D_+^{\half+2\epsilon} \Lambda^{-1}  u \,D_+ D_-^{\half-2\epsilon} \Lambda^{-1} v \, .
\end{align}
\end{lemma}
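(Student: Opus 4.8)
The plan is to treat $\Gamma^1(u,v) = -uv + \Lambda^{-1}R_j(\partial_t u)\,\Lambda^{-1}R^j(\partial_t v)$ by exploiting the cancellation between its two terms when both inputs are supported near the light cone. First I would pass to the symbol level: writing $\widehat u$ supported at $(\tau,\xi)$ and $\widehat v$ at $(\lambda,\eta)$, the bilinear symbol of $\Gamma^1$ is
\begin{equation*}
  m(\tau,\xi,\lambda,\eta) = -1 + \frac{\tau\lambda}{|\xi|\,|\eta|}\cdot\frac{\xi\cdot\eta}{|\xi|\,|\eta|}
  \quad\text{(schematically)},
\end{equation*}
and the point is that $|m| \lesssim \dfrac{||\tau|-|\xi||}{|\xi|} + \dfrac{||\lambda|-|\eta||}{|\eta|} + |\angle(\xi,\eta)|$ up to harmless constants — i.e. $m$ vanishes to first order on the set where $\tau=\pm|\xi|$, $\lambda=\pm|\eta|$ and $\xi\parallel\eta$ with the correct relative sign. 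This is exactly the structure Tesfahun isolated: the symbol is controlled by a sum of three "gains'', one from each elliptic weight $D_-$ on the two factors and one genuine angular (null-form) factor.

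Next I would distribute these three gains. The angular factor $|\angle(\xi,\eta)|$ is estimated, as in \cite{ST} and \cite{KMBT}, by $|\angle(\xi,\eta)| \lesssim \big(\tfrac{||\tau|-|\xi||}{|\xi|}\big)^{1/2} + \big(\tfrac{||\lambda|-|\eta||}{|\eta|}\big)^{1/2} + \big(\tfrac{||\tau+\lambda|-|\xi+\eta||}{|\xi+\eta|}\big)^{1/2}$ (the standard resolution of the angle into the three hyperbolic weights), and the two remaining linear gains $||\tau|-|\xi||/|\xi|$, $||\lambda|-|\eta||/|\eta|$ are split into $D_-$-powers on the two factors by elementary Cauchy–Schwarz-type splitting $X \lesssim X^{1/2-2\epsilon}X^{1/2+2\epsilon}$. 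Collecting terms, the "$D_-$ on the output'' contribution produces the first displayed term of $\Gamma^1_1$ and $\Gamma^1_2$, while the "$D_-$ on an input'' contributions produce the remaining two terms; the $D^{-1/2+2\epsilon}$ versus $D_+^{1/2+2\epsilon}\Lambda^{-1}$ discrepancy between $\Gamma^1_1$ and $\Gamma^1_2$ simply reflects whether the $|\xi|^{-1}$ from $\Lambda^{-1}R_j$ is absorbed into the homogeneous weight $D$ or the inhomogeneous weight $\Lambda$ — for \eqref{45'} one keeps the homogeneous normalization (valid at high frequencies, with the low-frequency remainder absorbed into $(\Lambda^{-2}u)v + u(\Lambda^{-2}v)$), and for \eqref{45''} one uses the inhomogeneous normalization throughout, which is why no extra low-frequency term is needed but one loses a derivative, recorded as the bare $uv$ term. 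The bound \eqref{45} from \eqref{44} is then just the fractional Leibniz rule for $D_+$ applied to the first term of $\Gamma^1_2$, exactly as in the derivation of \eqref{41'} and \eqref{42''} in Lemma \ref{Lemma2.2}.

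The main obstacle is the first step: establishing the pointwise symbol bound $|m| \lesssim \frac{||\tau|-|\xi||}{|\xi|} + \frac{||\lambda|-|\eta||}{|\eta|} + |\angle(\xi,\eta)|$ cleanly, including a careful treatment of the $\pm$ sign combinations (the cancellation in $-1 + \tau\lambda\,(\xi\cdot\eta)/(|\xi|^2|\eta|^2)$ only occurs for the matching choice of signs of $\tau$ and $\lambda$; for the mismatched signs one gets $|m|\lesssim 1$ with no angular gain, but then $\tau$ and $\lambda$ have opposite signs and one extracts a gain of the full weight $D_-$ on the output from $||\tau+\lambda| - |\xi+\eta||$, which suffices since that case is non-resonant). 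Once the symbol estimate is in hand, the rest is bookkeeping of which weight lands on which factor, entirely parallel to Tesfahun's argument in \cite{Te}; for this reason I would state the symbol inequality as the key sublemma and then merely indicate the distribution of gains, referring to \cite{Te} for the routine details.
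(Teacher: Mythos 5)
Your overall strategy coincides with the paper's: work at the symbol level, extract an angular gain via the Klainerman--Machedon--Bourgain--Tataru estimate, extract the temporal gain via a $Q_0$-type structure, collect the low-frequency remainder into $(\Lambda^{-2}u)v+u(\Lambda^{-2}v)$, use the trivial bound for \eqref{45''}, and get \eqref{45} from \eqref{44} by the fractional Leibniz rule. However, the key sublemma you propose to prove is false as stated. The bound
$$|m| \lesssim \frac{||\tau|-|\xi||}{|\xi|} + \frac{||\lambda|-|\eta||}{|\eta|} + |\angle(\xi,\eta)|$$
fails in the region where both inputs have modulation much larger than frequency: take $\xi=\eta$ unit vectors and $\tau=\lambda=N$, so that $|m|\sim N^2$ while your right-hand side is only $O(N)$. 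In that region the symbol is of the size of the \emph{product} of the two hyperbolic gains, not their sum, and a sum-of-first-order-gains bound cannot produce the terms of $\Gamma^1_2$, each of which carries only a single $D_-^{\half-2\epsilon}$ factor.

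The paper avoids ever proving such a pointwise bound for the temporal part. It splits the symbol as $p=I+II$ with
$$I=-1+\frac{\angles{\xi,\eta}^2}{\angles{\xi}^2\angles{\eta}^2},\qquad II=\frac{(\tau\lambda-\angles{\xi,\eta})\angles{\xi,\eta}}{\angles{\xi}^2\angles{\eta}^2}.$$
The part $I$ is independent of $\tau,\lambda$ (so no sign case analysis is needed there); it is bounded by $\sin^2\angle(\xi,\eta)$ plus $\angles{\xi}^{-2}+\angles{\eta}^{-2}$, and the paper uses $\sin^2\angle\le|\sin\angle|^{1-4\epsilon}$ before applying the angular estimate, which is what produces the exponents $\half-2\epsilon$ (your exponent $\half$ would be borderline in the subsequent product estimates). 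The part $II$ is bounded by the exact symbol of $Q_0(\Lambda^{-1}u,\Lambda^{-1}v)$, and the already-established estimate \eqref{41} (Klainerman--Selberg, Lemma 7.6) is then invoked with $\alpha=\half+2\epsilon$; it is precisely that lemma which correctly handles the doubly-non-resonant region via the algebraic identity $2(\tau\lambda-\xi\cdot\eta)=((\tau+\lambda)^2-|\xi+\eta|^2)-(\tau^2-|\xi|^2)-(\lambda^2-|\eta|^2)$ and interpolation with the trivial bound. To repair your plan you should replace your sublemma by this $I+II$ splitting and cite \eqref{41} for the $Q_0$ piece rather than attempting a unified first-order symbol bound.
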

\begin{proof}
$\Gamma^1(u,v)$ has the symbol
\begin{align*}
p(\xi,\tau,\eta,\lambda)& = -1 + \frac{\angles{\xi,\eta} \tau \lambda}{\angles{\xi}^2 \angles{\eta}^2} = \left( -1 + \frac{\angles{\xi,\eta} \angles{\xi,\eta}}{\angles{\xi}^2 \angles{\eta}^2} + \frac{(\tau \lambda -\angles{\xi,\eta}) \angles{\xi,\eta}}{\angles{\xi}^2 \angles{\eta}^2} \right) = I + II
\end{align*}
Now we estimate
\begin{align*}
|I| & = \left| \frac{|\xi|^2 |\eta|^2\cos^2 \angle(\xi,\eta)}{\angles{\xi}^2 \angles{\eta}^2} -1 \right| \\
&
\le \left| \frac{\angles{\xi}^2 \angles{\eta}^2 \cos^2 \angle(\xi,\eta)}{\angles{\xi}^2 \angles{\eta}^2} -1 \right|
+ \left| \frac{|\xi|^2 |\eta|^2 - \angles{\xi}^2 \angles{\eta}^2}{\angles{\xi}^2 \angles{\eta}^2} \right| \\
& = \sin^2 \angle(\xi,\eta) + \left| \frac{|\xi|^2 |\eta|^2 - \angles{\xi}^2 \angles{\eta}^2}{\angles{\xi}^2 \angles{\eta}^2} \right| \, ,
\end{align*}
where $\angle(\xi,\eta)$ denotes the angle between $\xi$ and $\eta$ .
We have
$$\left| \frac{|\xi|^2 |\eta|^2 - \angles{\xi}^2 \angles{\eta}^2}{\angles{\xi}^2 \angles{\eta}^2} \right| = \frac{|\xi|^2 + |\eta|^2 + 1}{\angles{\xi}^2 \angles{\eta}^2} \le \frac{1}{\angles{\xi}^2} + \frac{1}{\angles{\eta}^2} $$
and
\begin{align*}
&\sin^2 \angle(\xi,\eta) \le |\sin \angle(\xi,\eta)|^{1-4\epsilon} = |1-\cos \angle(\xi,\eta)|^{\half -2\epsilon} |1+\cos \angle(\xi,\eta)|^{\half - 2\epsilon} \\
& \lesssim \frac{|\xi + \eta|^{\half -2\epsilon}}{|\xi|^{\half -2\epsilon} |\eta|^{\half - 2\epsilon}} \left( ||\tau|-|\xi||^{\half-2\epsilon} + ||\lambda|-|\eta||^{\half-2\epsilon} + ||\tau + \lambda| - |\xi + \eta|^{\half-2\epsilon} \right)
\end{align*}
for $0 \le \epsilon \le \frac{1}{4}$
by \cite{KMBT}, Proof of proposition 1. Thus the operator belonging to the symbol I is controlled by $\Gamma^1_1(u,v) + (\Lambda^{-2}u)v + u (\Lambda^{-2} v)$ . 
Moreover 
$$ |II| \le \frac{|\tau \lambda - \angles{\xi,\eta}|}{\angles{\xi} \angles{\eta}} \, .$$ 
This is the symbol of $Q_0(\Lambda^{-1}u,\Lambda^{-1}v)$ , which is controlled by $\Gamma^1_2(u,v)$
by (\ref{41}). Thus we obtain (\ref{45'}) and using the trivial bound $|I| \lesssim 1$  also (\ref{45''}). Finally, (\ref{45}) follows by the fractional Leibniz rule for $D_+$ from (\ref{44}).
\end{proof}

\section{Reduction of the problem to multilinear estimates}
For the pure Yang-Mills system the reformulation and the reduction of our main theorem to nonlinear estimates is completely taken over from Tesfahun \cite{Te} (cf. also the fundamental paper by Selberg and Tesfahun \cite{ST}).

The system (\ref{1.1a}),(\ref{1.1b}),(\ref{1.1c}) in Lorenz gauge may be written in the following form by use of Lemma \ref{Lemma1Te} and (\ref{NullformTrick})  for (\ref{1.1c}), and Lemma \ref{Lemma1Te} and Lemma \ref{Lemma2Te} for (\ref{1.1b}):
\begin{equation}\label{AF}
\begin{aligned}
(i \partial_t \pm | \nabla |  ) \psi_{i,\pm} & = H_{i,\pm}(A,\psi) \\
  \square A_\beta &=  K_\beta(A,F) + J_{\beta}(\psi),
  \\
  \square F_{\beta\gamma} &=  L_{\beta\gamma}(A,F) + I_{\beta\gamma}(A,\psi),
\end{aligned}
\end{equation}
where
\begin{align*}
  K_\beta(A,\partial_t A,F,\partial_t F) &= -2 \mathcal Q[\Lambda^{-1} A,A_\beta] +
  \sum_{i=1}^4\Gamma^i_\beta(A, \partial A, F, \partial F)-2[\Lambda^{-2}  A^\alpha, \partial_\alpha A_\beta ]
  \\
 &\quad  - [A^\alpha, [A_\alpha, A_\beta]],
   \end{align*}
 \begin{align*}
  L_{ij}(A,\partial_t A,F,\partial_t F)
  = &- 2\mathcal Q[\Lambda^{-1} A,F_{ij}]
  + 2\mathcal Q[\Lambda^{-1} \partial_j A, A_i]- 2\mathcal Q[\Lambda^{-1} \partial_i A, A_j] 
  \\
  & + 2Q_0[A_i , A_j]
  + Q_{ij}[A^\alpha,A_\alpha]-2[\Lambda^{-2}  A^\alpha, \partial_\alpha F_{ij} ]
  \\
  &+2[\Lambda^{-2}  \partial_jA^\alpha, \partial_\alpha A_{i} ]-2[\Lambda^{-2}  \partial_i A^\alpha, \partial_\alpha A_{j} ]
  \\
  & - [A^\alpha,[A_\alpha,F_{ij}]] + 2[F_{\alpha i},[A^\alpha,A_j]] - 2[F_{\alpha j},[A^\alpha,A_i]]
  \\
  & - 2[[A^\alpha,A_i],[A_\alpha,A_j]],
  \end{align*} 
  \begin{align*}
  L_{0i}(A,\partial_t A,F,\partial_t F)
  = &- 2\mathcal Q[\Lambda^{-1} A,F_{0i}]
  + 2\mathcal Q[\Lambda^{-1} \partial_i A, A_0]- 2 Q_{0j}[A^j,A_i] \\
	& + 2Q_0[A_0 , A_i]
  + Q_{0i}[A^\alpha,A_\alpha]-2[\Lambda^{-2}  A^\alpha, \partial_\alpha F_{0i} ]\\
	& +2[\Lambda^{-2}  \partial_i A^\alpha, \partial_\alpha A_{0} ]
  - [A^\alpha,[A_\alpha,F_{0i}]] + 2[F_{\alpha 0},[A^\alpha,A_i]] \\
	&- 2[F_{\alpha i},[A^\alpha,A_0]]
  - 2[[A^\alpha,A_0],[A_\alpha,A_i]]
  \end{align*}
  where $\Gamma_{\beta}^i$ are defined in Lemma \ref{Lemma2Te}.\\[0.5em]

By Proposition  \ref{Prop1.6} it is posssible to reduce the proof of the local well-posedness result Proposition \ref{Prop.1} to the following multilinear estimates:
\begin{align}
\label{3.1}
\|\Lambda_+^{-1} \Lambda_-^{\epsilon-1}J_{\nu}(\psi)\|_{F^s} & \lesssim \omega_1(\|\psi\|_{X^{l,\frac{3}{4}+}_{\pm}}) \, , \\
\label{3.2}
\| \Lambda_+^{-1} \Lambda_-^{\epsilon-1} K_{\nu}(A,F)\|_{F^s} & \lesssim \omega_2(\|A\|_{F^s},\|F\|_{G^r}) \, , \\
\label{3.3}
\| \Lambda_+^{-1} \Lambda_-^{\epsilon-1} L_{\mu \nu}(A,F)\|_{G^r} & \lesssim \omega_3(\|A\|_{F^s},\|F\|_{G^r}) \, , \\
\label{3.4}
\| \Lambda_+^{-1} \Lambda_-^{\epsilon-1} I_{\nu}(A,\psi)\|_{G^r} & \lesssim \omega_4(\|A\|_{F^s},\|\psi\|_{X^{l,\frac{3}{4}+}_{\pm}}) \, , \\
\label{3.5}
\|H_{i,\pm}(A,\psi)\|_{X^{l,-\frac{1}{4}++}_{\pm}} & \lesssim \omega_5(\|A\|_{F^s},\|\psi\|_{X^{l,\frac{3}{4}+}_{\pm}}) \, ,
\end{align}
where $\omega_j$ are polynomials with $\omega_1(0)=0$ , $\omega_j(0,0) = 0$ ($j=1,2,3$) . \\[0.5em]

We start by considering the pure Yang-Mills part, namely  (\ref{3.2}) and (\ref{3.3}).
Looking at the terms in $K_{\nu}$ and $L_{\mu \nu}$ and noting the fact that the Riesz transforms 
$R_i$ are bounded in the spaces involved, the estimates
reduce to proving (we remark, that due to the multilinear character of the nonlinearity the estimates for the difference can be treated exactly like the other estimates) .\\
1. the corresponding estimates for the null forms $Q_{ij}$ , $Q_0$ and $Q \in \{Q_{0i},Q_{ij}\}$ :
\begin{align}
  \label{24}
  \norm{\Lambda_+^{-1} \Lambda_-^{\epsilon -1} Q[\Lambda^{-1} A, A]}_{F^s}
  &\lesssim \|A\|_{F^s} \|A\|_{F^s},
  \\
    \label{25}
  \norm{ \Lambda_+^{-1} \Lambda_-^{\epsilon -1} Q_{ij}[\Lambda^{-1} A, \Lambda^{-1} \partial A]}_{F^s}
  &\lesssim  \|A\|_{F^s} \|A\|_{F^s} ,
  \\
 \label{26}
  \norm{\Lambda_+^{-1} \Lambda_-^{\epsilon -1} Q[\Lambda^{-1}A, F]}_{G^r }
  &\lesssim \|A\|_{F^s}  \|F\|_{G^r},
    \\
  \label{27}
  \norm{ \Lambda_+^{-1} \Lambda_-^{\epsilon -1} Q[ A,   A]}_{G^r }
  &\lesssim \|A\|_{F^s} \|A\|_{F^s} ,\\
	\label{28}
  \norm{\Lambda_+^{-1} \Lambda_-^{\epsilon -1}  Q_0[ A,   A]}_{G^r }
  &\lesssim \|A\|_{F^s}  \|A\|_{F^s} ,
	\end{align} 
the following estimate for $\Gamma^1$ and other bilinear terms
\begin{align}
  \label{29}
  \norm{\Lambda_+^{-1} \Lambda_-^{\epsilon -1}\Gamma^1( A, \partial A)}_{F^s}
  &\lesssim\|A\|_{F^s} \|A\|_{F^s} ,
  \\
  \label{30} 
   \norm{\Lambda_+^{-1} \Lambda_-^{\epsilon -1}\Pi( A, \Lambda^{-2} \partial A  ) }_{F^s}
     &\lesssim \|A\|_{F^s} \|A\|_{F^s} ,
     \\
     \label{31} 
   \norm{\Lambda_+^{-1} \Lambda_-^{\epsilon -1} \Pi( \Lambda^{-2} A,  \partial A)   }_{F^s}
     &\lesssim \|A\|_{F^s} \|A\|_{F^s} ,
     \\
  \label{32} 
  \norm{\Lambda_+^{-1} \Lambda_-^{\epsilon -1}\Pi(\Lambda^{-1} F, \Lambda^{-1} \partial F  ) }_{F^s}
     &\lesssim \|F\|_{G^r} \|F\|_{G^r},
     \\
       \label{33} 
   \norm{\Lambda_+^{-1} \Lambda_-^{\epsilon -1} \Pi( \Lambda^{-2} A,  \partial F)   }_{G^r}
     &\lesssim \|A\|_{F^s} \|F\|_{G^r},
     \\
       \label{34} 
   \norm{ \Lambda_+^{-1} \Lambda_-^{\epsilon -1}\Pi( \Lambda^{-1} A,  \partial A)   }_{G^r}
     &\lesssim \|A\|_{F^s} \|A\|_{F^s}
     \end{align}
and\\
2. the following trilinear and quadrilinear estimates:
 \begin{align}
   \label{35}
   \norm{\Lambda_+^{-1} \Lambda_-^{\epsilon -1}\Pi(\Lambda^{-1} F,\Lambda^{-1} \partial( AA) )}_{F^s}
  &\lesssim  \|F\|_{G^r} \|A\|_{F^s} \|A\|_{F^s} ,
  \\
   \label{36}
   \norm{\Lambda_+^{-1} \Lambda_-^{\epsilon -1}\Pi(\Lambda^{-1}\partial F, \Lambda^{-1}  ( AA) )}_{F^s}
  &\lesssim  \|F\|_{G^r} \|A\|_{F^s} \|A\|_{F^s} ,
  \\
  \label{37}
   \norm{\Lambda_+^{-1} \Lambda_-^{\epsilon -1}\Pi(\Lambda^{-1}(AA), \Lambda^{-1} \partial ( AA)) }_{F^s}
  &\lesssim  \|A\|_{F^s} \|A\|_{F^s} \|A\|_{F^s} \|A\|_{F^s} ,
  \\
   \label{38} 
  \norm{\Lambda_+^{-1} \Lambda_-^{\epsilon -1}\Pi(A,A,A)}_{F^s}
  &\lesssim\|A\|_{F^s} \|A\|_{F^s} \|A\|_{F^s},
  \\
   \label{39}
  \norm{\Lambda_+^{-1} \Lambda_-^{\epsilon -1}\Pi(A, A, F)}_{G^r}
  &\lesssim \|A\|_{F^s} 
  \|A\|_{F^s}\|F\|_{G^r},
  \\
   \label{40}
  \norm{\Lambda_+^{-1} \Lambda_-^{\epsilon -1}\Pi(A,A, A, A)}_{G^r}
  &\lesssim  \|A\|_{F^s} \|A\|_{F^s} \|A\|_{F^s} \|A\|_{F^s},
     \end{align}
where $\Pi(\cdots)$ denotes a multilinear operator in its arguments.

\section{Proof of the multilinear estimates}
The obvious embeddings $H^{s,b} \hookrightarrow X^{s,b}_{\pm}$ for $b \le 0$ and $X^{s,b}_{\pm} \hookrightarrow H^{s,b}$ for $b \ge 0$ allow to pass from estimates in $X^{s,b}_{\pm}$ to corresponding estimates in $H^{s,b}$ in the following estimates wherever it is suitable.
\begin{proof}[Proof of (\ref{28})]
 We recall (\ref{41'}) for $\alpha = \epsilon$ :
\begin{align*}
Q_0(u,v) &\precsim  D_-^{1-\epsilon} (D_+ u D_+^{\epsilon} v) +   D_-^{1-\epsilon} (D_+^{\epsilon} u D_+ v)   \\
  &  \hspace{1em}  + (D_+ D_-^{1-\epsilon} u)(D_+^{\epsilon} v) + (D_+^{\epsilon} u)(D_+ D_-^{1-\epsilon} v) \, .
\end{align*}
Thus we have to show the following estimates and remark that we only have to consider the first and third term, because the last two terms are equivalent by symmetry. \\
1. For the first term it suffices to show
\begin{align*}
 &\| \Lambda_+^{-1} \Lambda_-^{-1+\epsilon} \Lambda^{r-1} \Lambda_+  D_-^{1-\epsilon}(D_+ u D_+^{\epsilon} v) \|_{H^{0,\half+\epsilon}} \\
&\hspace{1em}\lesssim \| \Lambda^{s-1} \Lambda_+ u \|_{H^{0,\half+\epsilon}} \| \Lambda^{s-1} \Lambda_+ v \|_{H^{0,\half+\epsilon}} \,. 
\end{align*}
This follows from
$$ \|uv\|_{H^{r-1,\half+\epsilon}} \lesssim \|u\|_{H^{s-1,\half+\epsilon}} \|v\|_{H^{s-\epsilon,\half+\epsilon}} \, , $$
which is a consequence of Prop. \ref{Prop.1.2'} under our conditions $s\ge r$ , $2s-r > \frac{3}{2}$ and $s-1+s-\epsilon > \half+\epsilon$ , thus here we need $s > \frac{3}{4}$ .  \\
2. For the second term we show
\begin{align*}
& \| \Lambda_+^{-1} \Lambda_-^{-1+\epsilon} \Lambda^{r-1} \Lambda_+ (D_+ D_-^{1-\epsilon}u D_+^{\epsilon} v) \|_{H^{0,\half+\epsilon}}
\lesssim \| \Lambda^{s-1} \Lambda_+ u \|_{H^{0,\half+\epsilon}} \| \Lambda^{s-1} \Lambda_+ v \|_{H^{0,\half+\epsilon}} \,.
\end{align*}
Thus it suffices to show
$$\|uv\|_{H^{r-1,-\half+2\epsilon}} \lesssim \|u\|_{H^{s-1,-\half+2\epsilon}} \|v\|_{H^{s-\epsilon,\half+\epsilon}} \, $$
which is a consequence of Prop. \ref{Prop.1.2'} as in 1. under the same assumptions. 
\end{proof}

\begin{proof}[Proof of (\ref{27})]
 We use (\ref{42'}).
Thus we have to show the following estimates and remark that we only have to consider the first two terms, because the last two terms are equivalent by symmetry. \\
1. For the first term it suffices to show
\begin{align*}
 &\| \Lambda_+^{-1} \Lambda_-^{-1+\epsilon} \Lambda^{r-1} \Lambda_+ \Lambda_+^{\half-2\epsilon} \Lambda_-^{\half-2\epsilon}(\Lambda_+^{\half+2\epsilon} u  \Lambda_+^{\half+2\epsilon} v) \|_{H^{0,\half+\epsilon}} \\
&\hspace{1em}\lesssim \| \Lambda^{s-1} \Lambda_+ u \|_{H^{0,\frac{3}{4}+\epsilon}} \| \Lambda^{s-1} \Lambda_+ v \|_{H^{0,\frac{3}{4}+\epsilon}} \,. 
\end{align*}
This follows from
$$ \|uv\|_{H^{r-\half-2\epsilon,0}} \lesssim \|u\|_{H^{s-\half-2\epsilon,\frac{3}{4}+\epsilon}} \|v\|_{H^{s-\half-2\epsilon,\frac{3}{4}+\epsilon}} \, , $$
which is a consequence of Prop. \ref{Prop.1.2'} with parameters $s_0=\half-r+2\epsilon$ , $s_1 = s_2 = s-\half-2\epsilon$ , $b_0 =0$ , $b_1=b_2=\frac{3}{4}+\epsilon$ , so that $s_0+s_1+s_2 > 1 $ , if $2s-r > \frac{3}{2}$ , and $s_0+s_1+s_2+s_1+s_2 > \frac{3}{2}$,  if $4s-r > \frac{11}{4}$ , which holds under our assumptions. \\
2. For the second term we show
\begin{align*}
& \| \Lambda_+^{-1} \Lambda_-^{-1+\epsilon} \Lambda^{r-1} \Lambda_+ \Lambda_+^{\half-2\epsilon} (\Lambda_+^{\half+2\epsilon} \Lambda_-^{\half-2\epsilon}u \Lambda_+^{\half+2\epsilon} v) \|_{H^{0,\half+\epsilon}} \\
&\lesssim \| \Lambda^{s-1} \Lambda_+ u \|_{H^{0,\frac{3}{4}+\epsilon}} \| \Lambda^{s-1} \Lambda_+ v \|_{H^{0,\frac{3}{4}+\epsilon}} \,.
\end{align*}
Using $\Lambda_-^{4\epsilon}u \precsim \Lambda_+^{4\epsilon}u$ ($u \preceq v$ denotes $\abs{\widehat u} \le \widehat v$) it suffices to show
$$\|uv\|_{H^{r-\half+2\epsilon,-\half-2\epsilon}} \lesssim \|u\|_{H^{s-\half-2\epsilon,\frac{1}{4}}} \|v\|_{H^{s-\half-2\epsilon,\frac{3}{4}+\epsilon}} \, $$
which is a consequence of Prop. \ref{Prop.1.2'} as in 1., if $2s-r > \frac{3}{2}$ and $3s-2r > \frac{7}{4}$ , which holds under our assumptions.
\end{proof}

\begin{proof}[Proof of (\ref{32})]
As before it is easy to see that we can reduce to
$$ \|uv\|_{H^{s-1,-\frac{1}{4}+2\epsilon}} \lesssim \|u\|_{H^{r+1,\half+\epsilon}} \|v\|_{H^{r,\half+\epsilon}}\,. $$
This is a consequence of Prop. \ref{Prop.1.2'}. One easily checks that it can be applied  under the conditions $s \le r+1$ , $2r-s > -1$ , $4r-s>-\frac{7}{4}$ and $3r-2s>-2$ , all of which are satisfied under our assumptions.
\end{proof}

\begin{proof}[Proof of (\ref{24}) and (\ref{25})]
We have to prove
$$ \|\Lambda_+^{-1} \Lambda_-^{-1+\epsilon} Q(u,v)\|_{H^{s,\frac{3}{4}+\epsilon}} \lesssim \|\Lambda_+ u\|_{H^{s,\frac{3}{4}+\epsilon}} \|\Lambda_+ v\|_{H^{s,\frac{3}{4}+\epsilon}} \, . $$
By (\ref{42''}) we reduce to the following estimates:
\begin{align*}
\|uv\|_{H^{s-\half+2\epsilon,\frac{1}{4}}} & \lesssim \|u\|_{H^{s+\half-2\epsilon,\frac{3}{4}+\epsilon}} \|v\|_{H^{s-\half-2\epsilon,\frac{3}{4}+\epsilon}} \, , \\
\|uv\|_{H^{s-\half+2\epsilon,-\frac{1}{4}+2\epsilon}} & \lesssim \|u\|_{H^{s+\half-2\epsilon,\frac{1}{4}+3\epsilon}} \|v\|_{H^{s-\half-2\epsilon,\frac{3}{4}+\epsilon}} \, , \\
\|uv\|_{H^{s-\half+2\epsilon,-\frac{1}{4}+2\epsilon}} & \lesssim \|u\|_{H^{s+\half-2\epsilon,\frac{3}{4}+\epsilon}} \|v\|_{H^{s-\half-2\epsilon,\frac{1}{4}+3\epsilon}} \, .
\end{align*}
We apply Proposition \ref{Prop.1.2'}. It is easy to check that the first and second estimate require the assumption $s > \frac{3}{4}$ and the third estimate the assumption $s > \half$ .
\end{proof}

\begin{proof}[Proof of (\ref{29})]
It is sufficient to show
\begin{equation}
\label{G1}
\| \Gamma^1(u,v) \|_{H^{s-1,-\frac{1}{4}+2\epsilon}} \lesssim \|u\|_{H^{s,\frac{3}{4}+\epsilon}} \|v\|_{H^{s-1,\frac{3}{4}+\epsilon}}  \, .
\end{equation}
We use Lemma \ref{Lemma2.1}.\\
a. We first consider $\Gamma_2^1(u,v)$ . By (\ref{44}) it suffices to show the following estimates, all of which are consequences of Proposition \ref{Prop.1.2'}. 
\begin{align*}
\|uv\|_{H^{s-\half-2\epsilon,\frac{1}{4}}} & \lesssim \|u\|_{H^{s+\half-2\epsilon,\frac{3}{4}+\epsilon}} \|v\|_{H^{s-\half-2\epsilon,\frac{3}{4}+\epsilon}} \, ,\\
\|uv\|_{H^{s-1,-\frac{1}{4}+2\epsilon}} & \lesssim \|u\|_{H^{s,\frac{1}{4}}} \|v\|_{H^{s-\half-2\epsilon,\frac{3}{4}+\epsilon}} \, , \\
\|uv\|_{H^{s-1,-\frac{1}{4}+2\epsilon}} & \lesssim \|u\|_{H^{s+\half-2\epsilon,\frac{3}{4}+\epsilon}} \|v\|_{H^{s-1,\frac{1}{4}}} \, . 
\end{align*}
b. Assume that $u$ and $v$ have frequencies $ \ge 1$ ,  so that $\Lambda_+^{\alpha} u \sim D_+^{\alpha}u$ . In this case we use (\ref{45'}) and consider $\Gamma^1_1(u,v)$ . By (\ref{43}) we reduce to 
\begin{align*}
\|uv\|_{H^{s-\half-2\epsilon,\frac{1}{4}}} & \lesssim \|u\|_{H^{s+\half-2\epsilon,\frac{3}{4}+\epsilon}} 
\|v\|_{H^{s-\half-2\epsilon,\frac{3}{4}+\epsilon}} \, , \\
 \|uv\|_{H^{s-\half-2\epsilon,-\frac{1}{4}+2\epsilon}}& \lesssim \| u \|_{H^{s+\half-2\epsilon,\frac{1}{4}+\epsilon}} \|v \|_{H^{s-\half-2\epsilon,\frac{3}{4}+\epsilon}} \, , \\
\|uv\|_{H^{s-\half-2\epsilon,-\frac{1}{4}+2\epsilon}} & \lesssim \|u\|_{H^{s+\half-2\epsilon,\frac{3}{4}+\epsilon}} 
\|v\|_{H^{s-\half-2\epsilon,\frac{1}{4}}} \, .
\end{align*}
These estimates follow from Proposition \ref{Prop.1.2'}, which requires $s > \frac{3}{4}$. \\  
c. Consider $(\Lambda^{-2}u) v$ and $u (\Lambda^{-2}v)$ . It suffices to show
\begin{align*}
\| (\Lambda^{-2}  u)v\|_{H^{s-1,-\frac{1}{4}+2\epsilon}} &\lesssim \|u\|_{H^{s,\frac{3}{4} + \epsilon}} \|v\|_{H^{s-1,\frac{3}{4}+\epsilon}} \, , \\
\|   u(\Lambda^{-2}v)\|_{H^{s-1,-\frac{1}{4}+2\epsilon}} &\lesssim \|u\|_{H^{s,\frac{3}{4} + \epsilon}} \|v\|_{H^{s-1,\frac{3}{4}+\epsilon}}\, .
\end{align*}
Both follow easily from Prop. \ref{SML} under our asumption $s > \half$ . \\
d. Let us now consider the case where the frequencies of $u$ or $v$ are $\le 1$. We use (\ref{45''}) instead of (\ref{45'}). Because $\Gamma^1_2(u,v)$ has already been handled, we only have to consider $uv$ .
If $u$ has low frequencies we obtain by Prop. \ref{SML}:
\begin{align*}
\| uv\|_{H^{s-1,-\frac{1}{4}+2\epsilon}} & \lesssim \|u\|_{H^{\frac{3}{2}+,\frac{3}{4}+\epsilon}} \|v\|_{H^{s-1,\frac{3}{4}+\epsilon}} \\
& \lesssim \|u\|_{H^{s,\frac{3}{4}+\epsilon}} \|v\|_{H^{s-1,\frac{3}{4}+\epsilon}} \, .
\end{align*}
Similarly we treat the case where $v$ has low frequencies. 
\end{proof}

\begin{proof}[Proof of (\ref{26})]
We may reduce to
$$ \|\Lambda_+^{-1} \Lambda_-^{\epsilon -1} \Lambda_+ Q(u,v)\|_{H^{r,\half+\epsilon}} \lesssim \|\Lambda u \|_{H^{s,\frac{3}{4}+\epsilon}} \|\Lambda_+ v\|_{H^{r-1,\half+\epsilon}} \, . $$ 
Now we use (\ref{42'})  and estimate the three terms as follows: \\
1. The estimate for the first term is reduced to (using the trivial estimate $\Lambda_-^{2\epsilon} u \precsim  \Lambda_+^{2\epsilon } u $) :
$$ \|uv\|_{H^{r-\half+2\epsilon,0}} \lesssim \|u\|_{H^{s+\half-2\epsilon,\frac{3}{4}+\epsilon}} \| v \|_{H^{r-\half+2\epsilon,\half+\epsilon}} \, , $$
which follows from Prop. \ref{Prop.1.2'}. \\
2. The estimate for the last term reduces to
$$ \|uv\|_{H^{r-\half+2\epsilon,-\half+2\epsilon}} \lesssim \|u\|_{H^{s+\half-2\epsilon,\frac{3}{4}+\epsilon}} \|v\|_{H^{r-\half +2\epsilon,0}} \, , $$
which is also a consequence of Prop. \ref{Prop.1.2'} under our assumption $2s-r > \frac{3}{2}$ . \\
3. The second term reduces to
$$ \|uv\|_{H^{r-\half+2\epsilon,-\half+2\epsilon}} \lesssim \|u\|_{H^{s+\half-2\epsilon,\frac{1}{4}}}    \|v\|_{H^{r-\half+2\epsilon,\half+2 \epsilon}} \, , $$ 
which requires $s > \frac{3}{4}$ .
\end{proof}

\begin{proof}[Proof of (\ref{30}) and (\ref{31})]
 We need the following estimate
$$
\|uv\|_{H^{s-1,-\frac{1}{4}+\epsilon}}  \lesssim \|u\|_{H^{s,\frac{3}{4}+\epsilon}} \|v\|_{H^{s+1,\frac{3}{4}+\epsilon}} \, ,$$
which easily follows from the Sobolev multiplication law (Prop. \ref{SML}), because $(1-s)+s+(s+1) = s+2 > \frac{3}{2}$ , and (\ref{31}) is treated in the same way. 
\end{proof}

\begin{proof}[Proof of (\ref{33}) and (\ref{34})]
(\ref{33}) reduces to the following estimate
$$ \|uv\|_{H^{r-1,-\half+\epsilon}} \lesssim \|u\|_{H^{s+2,\frac{3}{4}+\epsilon}} \|v\|_{H^{r-1,\half+\epsilon}} \, , $$
which follows from Prop. \ref{SML}, because $s+2 > \frac{3}{2}$ . Similarly (\ref{34}) reduces to
$$ \|uv\|_{H^{r-1,-\half+\epsilon}} \lesssim \|u\|_{H^{s+1,\frac{3}{4}+\epsilon}} \|v\|_{H^{s-1,\frac{3}{4}+\epsilon}} \, , $$
which also holds by Sobolev, where we use our assumption $2s-r+1 > \frac{5}{2}$ .
\end{proof}

\begin{proof}[Proof of (\ref{35})]
We reduce to
$$ \|uvw\|_{H^{s-1,-\frac{1}{4}+2\epsilon}} \lesssim \|u\|_{H^{r+1,\half+\epsilon}} \|vw\|_{H^{r,0}}  \lesssim \|u\|_{H^{r+1,\half+\epsilon}} \|v\|_{H^{s,\frac{3}{4}+\epsilon}} \|w\|_{H^{s,\frac{3}{4}+\epsilon}} \, , $$
which follows from Prop. \ref{Prop.1.2'}, where we use $2r-s > -1$ and $r \ge s-1$ for the first step and $2s-r > \frac{3}{2}$ for the second step.
\end{proof}

\begin{proof}[Proof of (\ref{36})]
We may reduce to
\begin{align*}
 \| u \Lambda^{-1} (vw)\|_{H^{s-1,-\frac{1}{4}+2\epsilon}} & \lesssim \|u\|_{H^{r,\half+\epsilon}} \|\Lambda^{-1} (vw)\|_{H^{s+\half,0}} \\
& \lesssim  \|u\|_{H^{r,\half+\epsilon}} \|v\|_{H^{s,\frac{3}{4}+\epsilon}} \|w\|_{H^{s,\frac{3}{4}+\epsilon}} \, ,
\end{align*}
by our assumption $2r-s > -1$ ,
where we use Prop. \ref{Prop.1.2'} twice .
\end{proof}

\begin{proof}[Proof of (\ref{38})]
 It suffices to consider the case $s=\frac{3}{4}$ . We easily obtain the desired estimate by Prop. \ref{Prop.1.2'}:
$$
 \|uvw\|_{H^{-\frac{1}{4},-\frac{1}{4}+2\epsilon}}  \lesssim \|u\|_{H^{\frac{3}{4},\frac{3}{4}+\epsilon}} \|vw\|_{H^{\frac{1}{4},0}} 
\lesssim \|u\|_{H^{\frac{3}{4},\frac{3}{4}+\epsilon}}  \|v\|_{H^{\frac{3}{4},\frac{3}{4}+\epsilon}}  \|w\|_{H^{\frac{3}{4},\frac{3}{4}+\epsilon}} \, . 
$$
\end{proof}

\begin{proof}[Proof of (\ref{39})]
We obtain by Prop. \ref{Prop.1.2'}:
$$ \|uvw\|_{H^{r-1,-\half+2\epsilon}} \lesssim \|uv\|_{H^{0+,0}}\|w\|_{H^{r,\half+\epsilon}} \lesssim \|u\|_{H^{s,\frac{3}{4}+\epsilon}} \|v\|_{H^{s,\frac{3}{4}+\epsilon}} \|w\|_{H^{r,\half+\epsilon}} \, . $$
\end{proof}

\begin{proof}[Proof of (\ref{37})]
We have to show
$$ \| \Lambda^{-1} (uv) wz \|_{H^{s-1,-\frac{1}{4}+2\epsilon}} \lesssim \|u\|_{H^{s,\frac{3}{4}+\epsilon}} 
\|v\|_{H^{s,\frac{3}{4}+\epsilon}} \|w\|_{H^{s,\frac{3}{4}+\epsilon}} \|z\|_{H^{s,\frac{3}{4}+\epsilon}} \, . $$
It suffices to consider the minimal value $s=\frac{3}{4}$ , which by Proposition \ref{Prop.1.2'}  can be estimated as follows:
\begin{align*}
&\| \Lambda^{-1} (uv) wz \|_{H^{-\frac{1}{4},-\frac{1}{4}+2\epsilon}}  \lesssim \| \Lambda^{-1} (uv)  \|_{H^{\frac{3}{4}+,\half+\epsilon}} \| wz  \|_{H^{0,0}} \\
& \hspace{1em}\lesssim \| u \|_{H^{\frac{3}{4},\frac{3}{4}+\epsilon}} \|v  \|_{H^{\frac{3}{4},\frac{3}{4}+\epsilon}} \| w\|_{H^{\frac{3}{4},\frac{3}{4}+\epsilon}}  \|z  \|_{H^{\frac{3}{4},\frac{3}{4}+\epsilon}} \, . 
\end{align*}
\end{proof}

\begin{proof}[Proof of (\ref{40})]
We have to show
$$ \|uv wz \|_{H^{r-1,-\half+2\epsilon}} \lesssim \|u\|_{H^{s,\frac{3}{4}+\epsilon}} 
\|v\|_{H^{s,\frac{3}{4}+\epsilon}} \|w\|_{H^{s,\frac{3}{4}+\epsilon}} \|z\|_{H^{s,\frac{3}{4}+\epsilon}} \, . $$
By our assumption $2s-r > \frac{3}{2}$ the left hand side is bounded by the term \\
$\|uv wz \|_{H^{2s-\frac{5}{2}-\epsilon,-\half+2\epsilon}}$ . It suffices to prove the remaining estimate for the (minimal) value $s= \frac{3}{4}$ . By Proposition \ref{Prop.1.2'} we obtain
\begin{align*}
\|uv wz \|_{H^{-1,-\half+2\epsilon}} & \lesssim \|uv\|_{H^{-\frac{1}{4},\half+\epsilon}} \| wz \|_{H^{\frac{3}{8},0}} \\
& \lesssim \|u\|_{H^{\frac{3}{4},\frac{3}{4}+\epsilon}}    \|v\|_{H^{\frac{3}{4},\frac{3}{4}+\epsilon}} \| w\|_{H^{\frac{3}{4},\frac{3}{4}+\epsilon}}  \|z \|_{H^{\frac{3}{4},\frac{3}{4}+\epsilon}} \, .
\end{align*}
\end{proof}

\begin{proof}[ Proof of (\ref{3.4}).]
We start with the quadratic terms in $I_{\mu \nu}(A,\psi)$ . We have to prove
\begin{equation}
\nonumber
\| \partial_{\beta} \langle \psi_1, \alpha_{\gamma} \psi_2 \rangle - \partial_{\gamma} \langle \psi_1, \alpha_{\beta} \psi_2 \rangle \|_{H^{r-1,-\half++}} \lesssim \|\psi_1\|_{X^{l,\frac{3}{4}+}_{\pm_1}} \|\psi_2\|_{X^{l,\frac{3}{4}+}_{\pm_2}} \, .
\end{equation}
Using the obvious inequality
$||\tau|-|\xi||^{-\half++} \le |\tau-|\xi||^{-\half++} +|\tau+|\xi||^{-\half++}$ it is sufficient to prove
\begin{equation}
\label{1}
\| \partial_{\beta} \langle \psi_1, \alpha_{\gamma} \psi_2 \rangle - \partial_{\gamma} \langle \psi_1, \alpha_{\beta} \psi_2 \rangle \|_{X_{\pm_0}^{r-1,-\half++}} \lesssim \|\psi_1\|_{X^{l,\frac{3}{4}+}_{\pm_1}} \|\psi_2\|_{X^{l,\frac{3}{4}+}_{\pm_2}} 
\end{equation}
for independent signs $\pm_0$ , $\pm_1$ , $\pm_2$ .

We obtain by (\ref{2.7}) :
\begin{align*}
& \partial_{\beta} \langle \psi_1, \alpha_{\gamma} \psi_2 \rangle - \partial_{\gamma} \langle \psi_1, \alpha_{\beta} \psi_2 \rangle \\
&=  \sum_{\pm_1,\pm_2} (\partial_{\beta} \langle \psi_{1_{\pm_1}}, \alpha_{\gamma} \Pi_{\pm_2} \psi_{2_{\pm_2}} \rangle - \partial_{\gamma} \langle \psi_{1_{\pm_1}}, \alpha_{\beta} \Pi_{\pm_2} \psi_{2_{\pm_2}} \rangle) \\
& = \sum_{\pm_1,\pm_2} (\partial_{\beta} \langle \psi_{1_{\pm_1}}, \Pi_{\mp_2}(\alpha_{\gamma}  \psi_{2_{\pm_2}}) \rangle - \partial_{\gamma} \langle \psi_{1_{\pm_1}}, \Pi_{\mp_2}(\alpha_{\beta}  \psi_{2_{\pm_2}}) \rangle)\\
& \hspace{1em} -  \sum_{\pm_1,\pm_2} (\partial_{\beta} \langle \psi_{1_{\pm_1}}, R^{\gamma}_{\pm_2}  \psi_{2_{\pm_2}} \rangle - \partial_{\gamma} \langle \psi_{1_{\pm_1}}, R^{\beta}_{\pm_2}  \psi_{2_{\pm_2}} \rangle)\\
&= I + II \, .
\end{align*}
Both terms are null forms. We namely obtain
$$ I = |\nabla|  \sum_{\pm_0,\pm_1,\pm_2} (R_{{\beta},\pm_0} \langle \psi_{1_{\pm_1}}, \Pi_{\mp_2}(\alpha_{\gamma}  \psi_{2_{\pm_2}}) \rangle - R_{{\gamma,\pm_0}} \langle \psi_{1_{\pm_1}}, \Pi_{\mp_2}(\alpha_{\beta}  \psi_{2_{\pm_2}}) \rangle) \, . $$
We consider each term separately and remark that $R_{\beta,\pm_0}$ is irrelevant. We obtain
\begin{align}
\label{30a}
&{\mathcal F}(\langle \Pi_{\pm_1} \psi_{1_{\pm_1}},\Pi_{\mp_2}\alpha_{\gamma} \psi_{2_{\pm_2}}\rangle) (\tau_0,\xi_0) \\
\nonumber
 &= \int_{\tau_1+\tau_2=\tau_0 \, , \, \xi_1 + \xi_2= \xi_0} \langle \Pi(\pm_1 \xi_1) \widehat{\psi_{1_{\pm_1}}} (\tau_1,\xi_1),\Pi(\mp_2 \xi_2)\alpha_{\gamma} \Pi(\pm_2 \xi_2) \widehat{\psi_{2_{\pm_2}}}(\tau_2,\xi_2)\rangle d\tau_1 d \xi_1 \\ \nonumber
&= \int_{\tau_1+\tau_2=\tau_0 \, , \, \xi_1 + \xi_2= \xi_0} \langle  \widehat{\psi_{1_{\pm_1}}} (\tau_1,\xi_1),\Pi(\pm_1 \xi_1)\Pi(\mp_2 \xi_2)\alpha_{\gamma}  \Pi(\pm_2 \xi_2) \widehat{\psi_{2_{\pm_2}}}(\tau_2,\xi_2)\rangle d\tau_1 d \xi_1  .
\end{align}
Now we use the estimate
$$ | \Pi(\pm \xi_1) \Pi(\mp \xi_2) z| \lesssim |z| \angle (\pm \xi_1,\pm \xi_2) $$ 
proven by \cite{AFS}, Lemma 2. This implies
$$ I \lesssim B_{\pm_1,\pm_2} (\psi_{1_{\pm_1}}, \psi_{2_{\pm_2}}) \, , $$
where $ B_{\pm_1,\pm_2}$ is defined by (\ref{2}).

We have to prove
\begin{equation}
\label{3}
\| B_{\pm_1,\pm_2} (\psi_{1_{\pm_1}}, \psi_{2_{\pm_2}}) \|_{X_{\pm_0}^{r,-\half++}} \lesssim \|\psi_{1_{\pm_1}} \|_{X^{l,\frac{3}{4}+}_{\pm_1}} \|\psi_{2_{\pm_2}} \|_{X^{l,\frac{3}{4}+}_{\pm_2}} \, .
\end{equation}
Next we obtain
$$ II = |\nabla| \sum_{\pm_1,\pm_2} (R_{\beta,\pm_0} \langle \psi_{1_{\pm_1}}, R^{\gamma}_{\pm_2}  \psi_{2_{\pm_2}} \rangle - R_{\gamma,\pm_0} \langle \psi_{1_{\pm_1}}, R^{\beta}_{\pm_2}  \psi_{2_{\pm_2}} \rangle) \, . $$
By duality we have to prove
\begin{align*}
&\left|\int\left(\langle \psi_{1_{\pm_1}},R^{\gamma}_{\pm_2} \psi_{2_{\pm_2}} \rangle R^{\beta}_{\pm_0} \psi_{0_{\pm_0}} - \langle \psi_{1_{\pm_1}},R^{\beta}_{\pm_2} \psi_{2_{\pm_2}} \rangle R^{\gamma}_{\pm_0} \psi_{0_{\pm_0}} \right) dx dt \right| \\
& \hspace{1em}
\lesssim \|\psi_{1_{\pm_1}}\|_{X^{l,\frac{3}{4}+}_{\pm_1}} \|\psi_{2_{\pm_2}}\|_{X^{l,\frac{3}{4}+}_{\pm_2}} \|\psi_{0_{\pm_0}}\|_{X^{-r,\half--}_{\pm_0}} \, .
\end{align*}
We remark that the left hand side possesses a $Q^{\gamma \beta}$-type null form between $\psi_{2_{\pm_2}}$ and $\psi_{0_{\pm_0}}$ .

It is well-known (cf. e.g. \cite{ST}) that the bilinear form $Q^{\gamma \beta}_{\pm_{1}, \pm_{2}}$ ,  defined by
\begin{align*}
	& Q^{\gamma \beta}_{\pm_{2}, \pm_{0}} (\phi_{2_{\pm_{2}}}, \phi_{0_{ \pm_{0}}}) 
	:= R^{\gamma}_{\pm_{2}} \phi_{2_{\pm_{2}}} R^{\beta}_{\pm_{0}} \phi_{0_{ \pm_{0}}} - R^{\beta}_{\pm_{2}} \phi_{2_{ \pm_{2}}} R^{\gamma}_{\pm_{0}} \phi_{0_{ \pm_{0}}} \, ,
\end{align*}
similarly to the standard null form $Q_{\gamma \beta}$ , which is defined by replacing the modified Riesz transforms $R^{\mu}_{\pm}$ by $\partial^{\mu}$, fulfills the following estimate:
$$  Q^{\gamma \beta}_{\pm_{2}, \pm_{0}} (\phi_{2_{ \pm_{2}}}, \phi_{0_{ \pm_{0}}}) \precsim B_{\pm_2,\pm_0}(\psi_{2_{\pm_2}},\psi_{0_{\pm_0}} )\, . $$
Recall that  $u \precsim v$ is defined by $|\widehat{u}| \lesssim |\widehat{v}|$ .
We have to prove
\begin{equation}
\label{4}
\|B_{\pm_2,\pm_0}(\psi_{2_{\pm_2}},\psi_{0_{\pm_0}}) \|_{X^{-l,-\frac{3}{4}-}_{\pm_1}} \lesssim  \|\psi_{2_{\pm_2}}\|_{X^{l,\frac{3}{4}+}_{\pm_2}} \|\psi_{0_{\pm_0}}\|_{X^{-r,\half--}_{\pm_0}} \, .
\end{equation}
For the estimates (\ref{3}) and (\ref{4}) we apply Prop. \ref{Prop.1.2}.

 For (\ref{3}) we have the parameters $\sigma_0 = -r$ , $\beta_0 = \half--$ , $\sigma_1=\sigma_2 = l$ , $b_1=b_2=\frac{3}{4}+$, so that we require the conditions $l \ge r$ , $l+r \ge 0$ . Moreover we need $(\sigma_0 + \sigma_1 + \sigma_2)+\beta_0 = 2l-r+\half > 1 \,\Leftrightarrow \, 2l-r > \half$ and $2l-r > \frac{3}{2}-(-r+\frac{3}{4}+l) \, \Leftrightarrow \, 3l-2r > \frac{3}{4}$ .

For (\ref{4}) we have $\sigma_0 = l$ , $\beta_0 = \frac{3}{4}+$ , $\sigma_1=l$ , $\sigma_2=-r$ , $\beta_1 = \frac{3}{4}+$, $\beta_2 = \half--$, so that we require $(\sigma_0+\sigma_1+\sigma_2)+\beta_0 = 2l-r+\frac{3}{4} > 1 \, \Leftrightarrow \, 2l-r > \frac{1}{4}$ , $2l-r > \frac{3}{2}-(l+\frac{3}{4}-r) \, \Leftrightarrow \, 3l-2r > \frac{3}{4}$ . These and the remaining conditions can easily been shown to be fulfilled under our asumptions.

Next we consider the cubic terms in $I_{\beta \gamma}(A,\psi)$ . The special structure plays no role so that we only have to prove
\begin{equation}
\label{5}
\| A_{\gamma} \psi_1 \psi_2 \|_{H^{r-1,-\half++}} \lesssim \|A_{\gamma} \|_{H^{s,\frac{3}{4}+}} \|\psi_1\|_{X^{l,\frac{3}{4}+}_{\pm_1}}  \|\psi_2\|_{X^{l,\frac{3}{4}+}_{\pm_2}}\, .
\end{equation}
We apply Prop. \ref{Prop.1.2'} twice and obtain
\begin{align*}
\| A_{\gamma} \psi_1 \psi_2 \|_{H^{r-1,-\half++}} \lesssim \|A_{\gamma} \|_{H^{s,\frac{3}{4}+}} \|\psi_1 \psi_2\|_{H^{m,0}} \lesssim  \|A_{\gamma} \|_{H^{s,\frac{3}{4}+}} \|\psi_1\|_{X^{l,\frac{3}{4}+}_{\pm_1}}  \|\psi_2\|_{X^{l,\frac{3}{4}+}_{\pm_2}}\, .
\end{align*}
The first estimate follows with parameters $s_0=1-r$ , $s_1=s$ , $s_2=m$ , $b_0 = \half--$, $b_1= \frac{3}{4}+$ , $b_2=0$ . We require $m \ge r-1$ , $m+s \ge 0$ , $s_0+s_1+s_2=1-r+s+m > 1 \, \Leftrightarrow \, m >r-s$ and $s_0+s_1+s_2 > \frac{3}{2}-(1-r+s) \, \Leftrightarrow \, m > -\half +2r -2s $ , which is the weaker condition on $m$ . The second estimate with parameters $s_0=m$ , $s_1=s_2=l$, $b_0=0$ , $b_1=b_2=\frac{3}{4}+$  requires $ l \ge m$ , $s_0+s_1+s_2=2l-m > 1 \, \Leftrightarrow \, 2l > m+1$ and $2l-m > \frac{3}{2}-2l$  , which is the weaker condition. Combining these conditions we require $ 2l > r-s+1$ , which is fulfilled, because $2l-r > \half > -s+1$ by our assumptions.
Moreover we require $2l > r$ , $2l> 1-s$ , $l > r-1$ and $l > r-s$ , which are also satisfied under our assumptions.
\end{proof}

\begin{proof}[ Proof of (\ref{3.1}).]
We have to prove the following estimate:
\begin{equation}
\| \langle \psi^i,\alpha_{\nu} T^a_{ij} \psi^j \rangle T_a \|_{H^{s-1,-\frac{1}{4}++}} \lesssim \sum_{\pm_1,\pm_2,i,j} (\|\psi_i\|_{X^{l,\frac{3}{4}+}_{\pm_1}} \|\psi_j\|_{X^{l,\frac{3}{4}+}_{\pm_2}}) \, .
\end{equation}
Up to constants the  term $T^a_{ij}$ is irrelevant. Thus we have to prove
$$
\| \psi_{1_{\pm_1}} \psi_{2_{\pm_2}} \|_{H^{s-1,-\frac{1}{4}++}} \lesssim \|\psi_{1_{\pm_1}} \|_{X^{l,\frac{3}{4}+}_{\pm_1}} \|\psi_{2_{\pm_2}} \|_{X^{l,\frac{3}{4}+}_{\pm_2}} \, .
$$
We use Prop. \ref{Prop.1.2'} with parameters $s_0=1-s$ , $s_1=s_2=l$ , $b_0=\frac{1}{4}--$ , $b_1=b_2= \frac{3}{4}+$ . This requires $l \ge s-1$ and $s_0+s_1+s_2 = 1-s+2l > 1 $ , thus $2l-s > 0$ . We remark that in the case $ s > \frac{3}{4} $ this implies $l > \frac{3}{8}$ . Moreover we need the conditions $2l+1-s > \frac{3}{2}-(\frac{1}{4}+2l)$ $\Leftrightarrow$ $4l-s> 0$ , which follows from $2l-s> 0$ and $l> \frac{3}{8}$ . 
\end{proof}

\begin{lemma}
\label{Lemma}
Define $A^{\pm}_{\mu} = \half(A_{\mu} \mp i |\nabla|^{-1} \partial_t A_{\mu})$ , so that $A_{\mu} = A^+_{\mu} + A^-_{\mu}$ . Let $s\in \R$ , $0 \le b \le 1$ . The following estimate applies:
$$ \|A^{\pm}_{\mu}\|_{X^{s,b}_{\pm}} \lesssim \| \Lambda_+ A_{\mu}\|_{H^{s-1,b}} \, . $$
\end{lemma}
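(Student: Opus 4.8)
The plan is to read both norms as weighted $L^2$-norms of the single function $\widetilde{A_\mu}$ and thereby reduce everything to a pointwise comparison of Fourier multipliers. Since $\widetilde{\partial_t A_\mu}(\tau,\xi)=i\tau\,\widetilde{A_\mu}(\tau,\xi)$ and $|\nabla|^{-1}$ has symbol $|\xi|^{-1}$, the space-time Fourier transform of $A^{\pm}_\mu$ is
\[
  \widetilde{A^{\pm}_\mu}(\tau,\xi)=\half\Bigl(1\pm\tfrac{\tau}{|\xi|}\Bigr)\widetilde{A_\mu}(\tau,\xi)=\frac{|\xi|\pm\tau}{2|\xi|}\,\widetilde{A_\mu}(\tau,\xi).
\]
Hence $\norm{A^{\pm}_\mu}_{X^{s,b}_\pm}$ is the $L^2_{\tau,\xi}$-norm of $\angles{\xi}^{s}\angles{-\tau\pm|\xi|}^{b}\,\tfrac{\,||\xi|\pm\tau|\,}{2|\xi|}\,\widetilde{A_\mu}$, while $\norm{\Lambda_+A_\mu}_{H^{s-1,b}}$ is the $L^2_{\tau,\xi}$-norm of $\angles{\xi}^{s-1}\angles{|\tau|-|\xi|}^{b}\angles{|\tau|+|\xi|}\,\widetilde{A_\mu}$. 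So it is enough to establish, for a.e.\ $(\tau,\xi)$, the pointwise multiplier bound
\begin{equation*}
  \frac{\angles{\xi}}{|\xi|}\,\bigl||\xi|\pm\tau\bigr|\,\angles{-\tau\pm|\xi|}^{b}\;\lesssim\;\angles{|\tau|-|\xi|}^{b}\,\angles{|\tau|+|\xi|},
  \tag{$\star$}
\end{equation*}
after which the lemma follows by multiplying by $\abs{\widetilde{A_\mu}}^{2}$ and integrating.

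For $(\star)$ I would first observe that the substitution $\tau\mapsto-\tau$ interchanges the two signs $\pm$ and leaves the right-hand side unchanged, so it suffices to treat the $+$ case. Next, $\angles{\xi}/|\xi|\lesssim1$ for $|\xi|\gtrsim1$, so one may restrict to frequencies $|\xi|\gtrsim1$ (on $|\xi|\lesssim1$ one has $\angles{\xi}\sim1$ and reads $|\nabla|^{-1}$ in the customary regularised form $\angles{\nabla}^{-1}$, i.e.\ replaces $|\xi|$ by $\angles{\xi}$ in the multiplier, which removes the issue). Then split on the sign of $\tau$. If $\tau\ge0$, then $|\xi|+\tau=|\xi|+|\tau|\le\angles{|\tau|+|\xi|}$ and $\angles{-\tau+|\xi|}=\angles{|\xi|-|\tau|}=\angles{|\tau|-|\xi|}$, so the left side of $(\star)$ is $\lesssim\angles{|\tau|+|\xi|}\angles{|\tau|-|\xi|}^{b}$, which is exactly the right side. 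If $\tau<0$, write $\tau=-|\tau|$; then $\bigl||\xi|+\tau\bigr|=\bigl||\xi|-|\tau|\bigr|\le\angles{|\tau|-|\xi|}$ and $\angles{-\tau+|\xi|}=\angles{|\tau|+|\xi|}$, whence the left side of $(\star)$ is $\lesssim\angles{|\tau|-|\xi|}\angles{|\tau|+|\xi|}^{b}$, and therefore
\[
  \frac{\angles{|\tau|-|\xi|}\,\angles{|\tau|+|\xi|}^{b}}{\angles{|\tau|-|\xi|}^{b}\,\angles{|\tau|+|\xi|}}=\Bigl(\tfrac{\angles{|\tau|-|\xi|}}{\angles{|\tau|+|\xi|}}\Bigr)^{1-b}\le1,
\]
using $0\le b\le1$ and $\bigl||\tau|-|\xi|\bigr|\le|\tau|+|\xi|$. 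This proves $(\star)$ and hence the asserted estimate.

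The only point that requires a little care is the second sub-case ($\tau<0$ for $A^{+}_\mu$, i.e.\ the half-wave piece supported away from its own light cone), where the multiplier $\tfrac{|\xi|+\tau}{2|\xi|}$ is not uniformly bounded; the gain there comes precisely from the elementary inequality $\bigl||\xi|-|\tau|\bigr|\le\angles{|\tau|-|\xi|}$ combined with the hypothesis $b\le1$, which allows the excess modulation to be absorbed into the factor $\angles{|\tau|+|\xi|}$ supplied by $\Lambda_+$. The low-frequency region is the other piece of bookkeeping and is disposed of by the standard regularisation remark above; apart from that the argument is a pure symbol computation.
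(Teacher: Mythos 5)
Your proof is correct and follows essentially the same route as the paper's: a pointwise comparison of the Fourier multipliers, split on the sign of $\tau$, using $\bigl||\tau|-|\xi|\bigr|\le|\tau|+|\xi|$ and $0\le b\le 1$ to absorb the excess modulation into the $\angles{|\tau|+|\xi|}$ weight. Your explicit remark about regularising $|\nabla|^{-1}$ to $\angles{\nabla}^{-1}$ at low frequencies is in fact slightly more careful than the paper, which silently writes the multiplier as $1\pm\tau/\angles{\xi}$.
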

\begin{proof}
We start with the estimate
\begin{align*}
\|A^{\pm}_{\mu}\|_{X^{s,b}_{\pm}} & = \| \langle \xi \rangle^s \langle -\tau \pm |\xi|\rangle^b \tilde{A}^{\pm}_{\mu}(\tau,\xi)\|_{L^2_{\tau \xi}} =  \| \langle \xi \rangle^s\langle -\tau \pm |\xi|\rangle^b (1\pm \frac{\tau}{\langle \xi \rangle})\tilde{A}_{\mu}(\tau,\xi)\|_{L^2_{\tau \xi}} \\
& \lesssim  \| \langle \xi \rangle^{s-1} \langle -\tau \pm |\xi|\rangle^b \langle |\xi|\pm \tau \rangle\tilde{A}_{\mu}(\tau,\xi)\|_{L^2_{\tau \xi}}  \, .
\end{align*}
If $\tau \le 0$ we obtain
$$\langle \tau \pm |\xi|\rangle^b \langle |\xi|\mp \tau \rangle = \langle |\tau| \pm |\xi|\rangle^b \langle |\xi|\mp |\tau| \rangle^{1-b}  \langle |\xi|\mp |\tau| \rangle^b \lesssim  \langle |\tau| + |\xi|\rangle \langle |\xi| - |\tau| \rangle^b \, , $$
and in the case $\tau \ge 0$ :
$$\langle \tau \pm |\xi|\rangle^b \langle |\xi|\mp \tau \rangle = \langle |\tau| \mp |\xi|\rangle^b \langle |\xi|\pm |\tau| \rangle^{1-b}  \langle |\xi|\pm |\tau| \rangle^b \lesssim  \langle |\tau| + |\xi|\rangle \langle |\xi| - |\tau| \rangle^b \, , $$
which imply the claimed result.
\end{proof}

\begin{proof}[Proof of (\ref{3.5}).]
Here we assume the Lorenz gauge condition. Using Lemma \ref{Lemma} 
we have to prove for fixed $a$ and $j$:
\begin{equation}
\label{6}
\| \Pi_{\pm_0} (A^a_{\mu_{\pm}} \alpha^{\mu} T^a_{ij} \psi_j) \|_{X^{l,-\frac{1}{4}++}_{\pm_0}} \lesssim \|\psi_j\|_{X^{l,\frac{3}{4}+}_{\pm_1}} \| A^a_{\mu_{\pm}}\|_{X^{s,\frac{3}{4}+}_{\pm}} \, .
\end{equation}
We omit the irrelevant factor $T^a_{ij}$ and apply (\ref{2.7}), so that
\begin{align*}
\Pi_{\pm_0}(A^a_{\mu} \alpha^{\mu} \psi) & =    \sum_{\pm} (-A_0^a \psi_{\pm} + \Pi_{\pm_0} (A^a_j \alpha^j \Pi_{\pm} \psi)\\
& =   \sum_{\pm} (-A_0^a \psi_{\pm} + \Pi_{\pm_0} (A^a_j \Pi_{\mp} \alpha^j \Pi_{\pm} \psi) - \sum_{\pm} A^a_j R^j_{\pm} \psi_{\pm} \\
& = - \sum_{\pm} (A^a_0 \psi_{\pm} + A^a_j R^j_{\pm} \psi_{\pm}) + \sum_{\pm_1} \Pi_{\pm_0}(A^a_{\mu} \Pi_{\mp_1}(\alpha^{\mu} \psi_{\pm_1})) = II + I \, ,
\end{align*}
where we used  $A^a_0 \Pi_{\mp_1} \alpha^0 \psi_{\pm_1} =0$ . Both terms turn out to be null forms.

Concerning I we have to prove by duality
\begin{align*}
\left| \int \int \langle \Pi_{\pm_0} (A^a_{\mu} \Pi_{\mp_1}(\alpha^{\mu} \psi_{\pm_1})), \psi_0 \rangle dx dt \right| \lesssim \|A^a_{\mu}\|_{X^{s,\frac{3}{4}+}_{\pm_2}} \|\psi_{1_{\pm_1}}\|_{X^{l,\frac{3}{4}}_{\pm_1}} \|\Pi_{\pm_0} \psi_0\|_{X^{-l,\frac{1}{4}--}_{\pm_0}} \, .
\end{align*}
The left hand side equals
$$ \left|\int \int (A^a_{\mu} \langle \Pi_{\mp_1}(\alpha^{\mu} \psi_{\pm_1})), \Pi_{\pm_0} \psi_0 \rangle dx dt \right| \, . $$
Similarly as in the proof of (\ref{3.4}) this term contains a null form between  $\psi_{\pm_1}$ and $\psi_0$ , so that we have to prove
\begin{equation}
\label{7}
\| B_{\pm_1,\pm_0} (\psi_{\pm_1} ,\psi_{0_{\pm_0}}) \|_{X^{-s,-\frac{3}{4}+}_{\pm_2}} \lesssim \|\psi_{\pm_1} \|_{X^{l,\frac{3}{4}+}_{\pm_1}} \|\psi_{0_{\pm_0}} \|_{X^{-l,\frac{1}{4}--}_{\pm_0}} \, ,
\end{equation}
where $ B_{\pm_1,\pm_0}$ is defined by (\ref{2}).

In order to show that II is also a null form we use the Helmholtz decomposition for ${\bf A} = (A_1,A_2,A_3)$ , where we drop the index $a$. Following \cite{ST} (cf. also \cite{Te} and \cite{HO}) we obtain
$${\bf A} = A^{df} + A^{cf} + \langle \nabla \rangle^{-2} {\bf A} \,, $$
where
$$ A^{cf} = - \langle \nabla \rangle^{-2} \nabla(\nabla \cdot {\bf A}) \quad , \quad  A^{df} =\langle \nabla \rangle^{-2} \nabla \times (\nabla \times {\bf A}) \, , $$
so that we obtain
\begin{align*}
-II & = \sum_{\pm_1} (A_0 \psi_{\pm_1} + A_l R^l_{\pm_1} \psi_{\pm_1}) \\
& = \sum _{\pm_1} (A_0 \psi_{\pm_1} + A_l^{cf} R^l_{\pm_1} \psi_{\pm_1}) + \sum_{\pm_1} A^{df}_l R^l_{\pm_1} \psi_{\pm_1} + \sum_{\pm_1} \langle \nabla \rangle^{-2} A_l R^l_{\pm_1} \psi_{\pm_1} \\
& =: I_1 + I_2 +I_3\, .
\end{align*}
$I_3$ is a harmless term, because we only have to prove
$$ \|A_l^{\pm_2} R^l_{\pm_1} \psi_{\pm_1}\|_{X^{l,-\frac{1}{4}++}_{\pm}} \lesssim \|A_l^{\pm_2} \|_{X^{s+2,\frac{3}{4}+}_{\pm_2}} \|\psi_{\pm_1}\|_{X^{l,\frac{3}{4}+}_{\pm_1}} \, , $$
which is certainly true by Sobolev for $s > -\half$ .
Now $I_2 = I_4 +  I_5$ , where 
$ I_4 =\sum_{\pm_1} \tilde{A}^{df}_l R^l_{\pm_1} \psi_{\pm_1}$  and  $I_5 = \sum_{\pm_1} (A_l^{df}- \tilde{A}_l^{df}) R^l_{\pm_1} \psi_{\pm_1} $ ,
and $\tilde{A}^{df}: = |\nabla|^{-2} \nabla \times(\nabla \times {\bf A})$. 
$I_5$ is also harmless, because it behaves like $\langle \nabla \rangle^{-2} A_l \psi_{\pm_1}$ , which can be handled as $I_3$ .
We obtain by \cite{ST1} (originally detected by \cite{KM1}):
$$ I_4 = - \sum_{\pm_1} \epsilon^{lkm} \partial_k w_m R^l_{\pm_1} \psi_{\pm_1} =\sum_{\pm_1} (\nabla w_m \times \frac{\nabla}{|\nabla|}\psi_{\pm_1})^m \, , $$
where $\epsilon^{lkm}$ denotes the Levi-Civita symbol with $\epsilon^{123} = 1$ and $w= |\nabla|^{-2} \nabla \times {\bf A}$.
This shows that $I_4$ is a $Q_{ij}$-type null form between $\nabla w_m$ and $\frac{\nabla}{|\nabla|} \psi_{\pm_1}$ . By the definiton of $w$ we know that $\partial_j w_m = |\nabla|^{-2} \partial_j \partial_k A^l \epsilon^{lkm}$ , so that we only have to prove
\begin{equation}
\label{8}
\| B_{\pm_1,\pm_2}(A^{\pm_2}_j, \psi_{\pm_1})\|_{X^{l,-\frac{1}{4}++}_{\pm}} \lesssim \|A^{\pm_2}_j \|_{X^{s,\frac{3}{4}+}_{\pm_1}} \|\psi_{\pm_1}\|_{X^{l,\frac{3}{4}+}_{\pm_1}} \, . 
\end{equation}
$I_1$ is also a nullform as detected by \cite{ST1}. In order to realize that we use the Lorenz condition to obtain
$$ A^{cf} = -\langle \nabla \rangle^{-2} \nabla(\nabla \cdot {\bf A}) = - \langle \nabla \rangle^{-2} \nabla(\partial_t A_0) \, . $$
 Using $\partial_t A_0 = i \langle \nabla \rangle (A_{0_+} -A_{0_{-}})$ , where $A_0=A_{0_+} +  A_{0_-} $, and $\partial_l = \mp i |\nabla |  R^l_{\pm}$ we obtain
\begin{align*}
I_1 & = \sum_{\pm_1} A_0 \psi_{\pm_1} - \langle \nabla \rangle^{-2} \partial_l \partial_t A_0 R^l_{\pm_1} \psi_{\pm_1} \\
& = \sum_{\pm_1} (A_{0_+} + A_{0-}) \psi_{\pm_1} -i \langle \nabla \rangle^{-1} \partial_l (A_{0_+}-A_{0_-}) R^l_{\pm_1} \psi_{\pm_1} \\
& = \sum_{\pm_1} (A_{0_+} + A_{0-}) \psi_{\pm_1} - (\tilde{R}^l_+ A_{0_+}+ \tilde{R}^{l}_- A_{0_-}) R^l_{\pm_1} \psi_{\pm_1} \\
& = \sum_{\pm,\pm_1} (A_{0_{\pm}} \psi_{\pm_1} - \tilde{R}^l_{\pm} A_{0_{\pm}} R^l_{\pm_1} \psi_{\pm_1} ) \, ,
\end{align*}
where $\tilde{R}^l_{\pm} := \pm i \langle \nabla \rangle^{-1} \partial_l$ .
The Fourier symbol of this expression can be estimated as follows:
\begin{align*}
 \left| 1 - \langle \frac{\pm \xi}{\langle \xi \rangle} , \frac{\pm_1 \eta}{|\eta|} \rangle \right| &\lesssim \left| 1 - \langle \frac{\pm \xi}{|\xi|} , \frac{\pm_1 \eta}{|\eta|} \rangle \right|
+ \frac{|\langle \pm \xi, \pm_1 \eta \rangle|}{|\xi| |\eta| \langle \xi \rangle} 
 \lesssim|1- \cos \angle(\pm \xi, \pm_1 \eta)| + \frac{1}{\langle \xi \rangle} \\
 &\lesssim \angle(\pm \xi,\pm_1 \eta) ^2 + \frac{1}{\langle \xi \rangle}  \lesssim |\angle(\pm \xi,\pm_1 \eta)| + \frac{1}{\langle \xi \rangle} \, .
\end{align*}
The last term is harmless and can be handled as $I_3$ and $I_5$ . The first term
reduces the desired estimate for $I_1$ to
\begin{equation}
\label{9}
\| B_{\pm,\pm_1} (A^{\pm}_0,\psi_{\pm_1}) \|_{X^{l,-\frac{1}{4}+}_{\pm}} \lesssim \|A^{\pm}_0 \|_{X^{s,\frac{3}{4}+}_{\pm}} \|\psi_{\pm_1} \|_{X^{l,\frac{3}{4}+}_{\pm_1}} \, .
\end{equation}
The estimates (\ref{7}),(\ref{8}),(\ref{9}) are implied by Prop. \ref{Prop.1.2}.

 For (\ref{7}) we apply this proposition with parameters $\sigma_0 = s$ , $\beta_0 = \frac{3}{4}--$ , $\sigma_1 = l$ , $\sigma_2=-l$, $\beta_1 = \frac{3}{4}+$ , $\beta_2 = \frac{1}{4}+$ . We require $ s \ge l$ , $\sigma_0+\sigma_1+\sigma_2 + \beta_0 = s+\frac{3}{4}- > 1$, which is fulfilled. Moreover $s > \frac{3}{2} -(s+\frac{3}{4}-l) \, \Leftrightarrow \, 2s-l > \frac{3}{4}$ , which is fulfilled, because $2s-l = s+(s-l) \ge s > \frac{3}{4}$ , and $ s > \frac{3}{2}-(s+l+\frac{1}{4}) \, \Leftrightarrow \, 2s+l > \frac {5}{4}$, which holds for $s > \frac{3}{4} $ , $ l > \frac{1}{4}$ .

For (\ref{8}) and (\ref{9}) we have $\sigma_0=-l$ , $\beta_0 = \frac{1}{4}--$ , $\sigma_1 =l$ , $\beta_1=\frac{3}{4}+$ , $\beta_2 = \frac{3}{4}+$,  so that we require $\sigma_0+\sigma_1+\sigma_2+\beta_0 = s+\frac{1}{4} > 1$ , so that our assumption $s > \frac{3}{4}$ is sharp here. Moreover we require $ s+\frac{1}{4} > \frac{3}{2} -(\frac{1}{4}+s+l) \, \Leftrightarrow \, 2s+l > 1$ , which is satisfied, $s > \frac{3}{2}-(-l+\frac{3}{4}+l) \, \Leftrightarrow \, s > \frac{3}{4}$ and $s> \frac{3}{2}-(-l+s+\frac{3}{4}) \, \Leftrightarrow \, 2s-l > \frac{3}{4}$, which is satisfied as before.
\end{proof}

\section{Proof of Theorem \ref{Theorem1.1}.}
As noticed in Remark 3 it only remains to prove that the solution of Proposition \ref{Prop.1} fulfills
$u := \partial^{\mu} A_{\mu} = 0 $ and $V_{\mu \nu} := F_{\mu \nu} - F[A]_{\mu \nu} = 0 $ . Let us remark that a sketch of the proof for the Yang-Mills equation was given by \cite{ST}.

We recall  $J_{\mu} (\psi): = -\langle \psi_i, \alpha_{\mu} T^a_{ij} \psi_j \rangle T_a$ , so that $J^a_{\mu}(\psi) = -\langle \psi_i, \alpha_{\mu} T^a_{ij} \psi_j \rangle$ and 
$$[A^{\mu},J_{\mu}(\psi)] = [A^b_{\mu} T_b, J^c_{\mu}(\psi) T_c] = A^b_{\mu} J^c_{\mu}(\psi) [T_b,T_c] \, , $$
so that $[A^{\mu},J_{\mu}(\psi)]_a = A^b_{\mu} J^b_{\mu}(\psi) f_{abc} $, where $f_{abc} := [T_b,T_c]_a $ , thus
\begin{equation}
\label{J}
D^{\mu} J^a_{\mu}(\psi) = \partial^{\mu} J^a_{\mu}(\psi) + f_{abc} A^b_{\mu} J^c_{\mu}(\psi) = 0 \, . 
\end{equation}
This is a well-known fact in physics. We refer to \cite{Sz}, formula (25.83), or to \cite{CC}, formula (2.9). We start with the identity
$$D_{\nu} D_{\mu} F[A]^{\mu \nu} = 0 \, , $$
which can be shown by an elementary calculation. We obtain by (\ref{0.4}): 
\begin{align}
\nonumber
D^{\mu} F[A]_{\mu \nu} & = \square A_{\nu} - \partial_{\nu} \partial^{\mu} A_{\mu}+ [\partial^{\mu} A_{\mu},A_{\nu}] + [A^{\mu},\partial^{\mu} A_{\nu}] + [A^{\mu},F[A]_{\mu \nu}] \\
\label{35'}
& =  -\partial_{\nu} u + [u,A_{\nu}] + J_{\nu}(\psi) - [A^{\mu},V_{\mu \nu}] \,.
\end{align}
This implies by (\ref{J}) :
\begin{align*}
0=D_{\nu} D_{\mu} F[A]^{\mu \nu} &= D^{\nu}(-\partial_{\nu} u + [u,A_{\nu}]) + D^{\nu} J_{\nu}(\psi) - D^{\nu} [A^{\mu},V_{\mu \nu}] \\
& = -\partial^{\nu} \partial_{\nu} u + \partial^{\nu}[u,A_{\nu}] - [A_{\nu},\partial^{\nu} u] + [A_{\nu},[u,A^{\nu}]] \\
& \quad - \partial^{\nu}[A^{\mu},V_{\mu \nu}] - [A_{\mu},\partial^{\nu} V_{\mu \nu}] - [A^{\nu},[A^{\mu},V_{\mu \nu}]] \, ,
\end{align*}
so that we obtain
\begin{equation}
\label{*}
\square u = M_1(A_{\nu},\nabla A_{\mu},u,\nabla u,V_{\mu \nu},\nabla V_{\mu \nu}) \, ,
\end{equation}
where $M_j$ here and in what follows denote linear functions in $u,\nabla u,V_{\mu \nu}, \nabla V_{\mu \nu}$ .

In order to prove a similar equation for $\square V_{\mu \nu}$ we start with the Bianchi identity
$$D_{\mu} F[A]_{\beta \gamma} + D_{\beta} F[A]_{\gamma \mu} + D_{\gamma} F[A]_{\mu \beta} = 0 \, . $$
This implies
$$ - D^{\mu} D_{\mu} V_{\beta \gamma} + D^{\mu}D_{\mu} F_{\beta \gamma} + D^{\mu} D_{\beta} F[A]_{\gamma \mu} + D^{\mu} D_{\gamma} F[A]_{\mu \beta} = 0 \, . $$
Using the commutation identity
$$ D^{\mu}D_{\beta} F[A]_{\gamma \mu} =  D^{\beta}D_{\mu} F[A]_{\gamma \mu} + [F[A]_{\mu \beta},F[A]_{\gamma \mu}] $$
we obtain
\begin{align}
\nonumber
&- D^{\mu}D_{\mu} V_{\beta \gamma} + D^{\mu} D^{\mu} F_{\beta \gamma} +  D^{\beta}D^{\mu} F[A]_{\gamma \mu} + [F[A]^{\mu \beta},F[A]_{\gamma \mu}] \\
\label{36'}
&  \hspace{10em}
+  D^{\gamma}D^{\mu} F[A]_{\mu \beta} + [F[A]^{\mu \gamma},F[A]_{\mu \beta}] = 0 \, .
\end{align}
(\ref{35'}) implies
\begin{align*}
D^{\beta}D^{\mu} F[A]_{\gamma \mu} + D^{\gamma}D^{\mu} F[A]_{\mu \beta} & = D^{\beta}(\partial_{\gamma}u -[u,A_{\gamma}] +[A^{\mu},V_{\mu \gamma}]-J_{\gamma}(\psi)) \\
& \quad + D^{\gamma}(-\partial_{\beta}u+[u,A_{\beta}] -[A^{\mu},V_{\mu \beta}]+J_{\beta}(\psi)) \, ,
\end{align*}
so that (\ref{36'}) implies
\begin{align*}
&-D^{\mu} D_{\mu}V_{\beta \gamma} + D^{\mu} D_{\mu} F_{\beta \gamma} + \partial^{\beta}\partial_{\gamma} u - \partial_{\beta}[u,A_{\gamma}]+\partial_{\beta}[A^{\mu},V_{\mu \gamma}] - D^{\beta} J_{\gamma}(\psi) \\& - \partial^{\gamma} \partial_{\beta}u + \partial^{\gamma}[u,A_{\beta}]-\partial^{\gamma}[A^{\mu},V_{\mu \beta}] + D^{\gamma} J_{\beta}(\psi) \\
&+[A_{\beta},\partial_{\beta}[u,A_{\gamma}]] + [A_{\beta},\partial^{\beta}[A^{\mu},V_{\mu \gamma}]]
-[A_{\gamma},\partial^{\gamma}[u,A_{\beta}]] - [A_{\gamma},\partial^{\gamma}[A^{\mu},V_{\mu \beta}]] \\
& + [F[A]^{\mu \beta},F[A]_{\gamma \mu}] + [F[A]^{\mu \gamma},F[A]_{\mu \beta}]=0 \, ,
\end{align*}
which means that
\begin{align*}
-D^{\mu} D_{\mu} V_{\beta \gamma} + D^{\mu} D_{\mu} F_{\beta \gamma} +& M_2(A_{\nu},\nabla A_{\nu},u,\nabla u,V_{\mu \nu},\nabla V_{\mu \nu})\\ &+ 2[F[A]^{\mu \beta},F[A]_{\gamma \mu} ]- D^{\beta} J_{\gamma}(\psi) + D^{\gamma} J_{\beta}(\psi) = 0 \, .
\end{align*}
By (\ref{*0}) this implies
\begin{equation}
\label{**}
D^{\mu} D_{\mu} V_{\beta \gamma} = 2([F^{\mu \beta},F_{\gamma \mu}]-[F[A]^{\mu \beta},F[A]_{\gamma \mu}]) - M_2(A_{\nu},\nabla A_{\nu},u,\nabla u,V_{\mu \nu}, \nabla V_{\mu \nu}) \, .
\end{equation}
Furthermore we obtain
\begin{align*}
D^{\mu} D_{\mu} V_{\beta \gamma} &= \partial^{\mu}(D_{\mu} V_{\beta \gamma}) + [A^{\mu},D_{\mu} V_{\beta \gamma}] = \square V_{\beta \gamma} +  M_3(A_{\nu},\nabla A_{\nu},V_{\beta \gamma}, \nabla V_{\beta \gamma}) 
\end{align*} 
and
\begin{align*}
&[F^{\mu \beta},F_{\gamma \mu}] - [F[A]^{\mu \beta},F[A]_{\gamma \mu}] = [F[A]^{\mu \beta}+V^{\mu \beta},F[A]_{\gamma \mu}+V_{\gamma \mu}] -[F[A]^{\mu \beta},F[A]_{\gamma \mu}] \\
& = [V^{\mu \beta},F[A]_{\gamma \mu}+V_{\gamma \mu}]
+ [F[A]^{\mu \beta},V_{\gamma \mu}] \\
& = [V^{\mu \beta},F[A]_{\gamma \mu}] + [V^{\mu \beta},F_{\gamma \mu} - F[A]_{\gamma \mu}] + [F[A]^{\mu \beta},V_{\gamma \mu}] = M_4(A_{\nu},\nabla A_{\nu},F_{\gamma \mu})V_{\mu \beta}  
\end{align*}
Thus (\ref{**}) reduces to
\begin{equation}
\label{***}\square V_{\beta \gamma} = M_5(A_{\nu},\nabla A_{\nu},F_{\gamma \mu},u,\nabla u, V_{\mu \nu}, \nabla V_{\mu\nu})
\end{equation}
We have shown by (\ref{*}) and (\ref{***}) that $u$ and $V_{\mu \nu}$ fulfill a system of linear wave equations. Our aim is to show that $u=V_{\mu\nu} =0$ . It remains to prove under our assumptions on the data  $$u(0) = (\partial_t u)(0) = V_{\mu \nu}(0) = (\partial_t V_{\mu \nu})(0) = 0 \,.$$
Obviously we obtain by (\ref{Const}) :
$$u(0) = -(\partial_t A_0)(0) + (\partial_j A^j)(0) = -a_1^0 + \partial_j a_0^j =0 \, . $$
Next we calculate
\begin{align*}
\partial_t u & = -\partial_t(\partial_t A_0 - \partial_j A^j) = \square A_0 - \Delta A_0 + \partial_t \partial_j A^j \\
& = -[A^{\mu},\partial_{\mu}A_0] - [A^{\mu},F_{\mu \nu}] - \Delta A_0 + \partial_t \partial_j A^j + J_0(\psi) \\
& =- [A^k,\partial_k A_0] -[A^k,F_{k0}] + [A_0,\partial_t A_0]- \Delta A_0 + \partial_j(\partial_t A^j) + J_0(\psi) \, ,
\end{align*}
which implies by (\ref{f2}) :
\begin{align*}
&(\partial_t u)(0) = -[a^k_0,\partial_k a^0_0] + [a^0_0,a_1^0] - [a^k_0,f^{k0}_0] - \partial^j \partial_j a^0_0 + \partial_j a^j_1 + J_0(\psi)(0) \\
& = -[a^k_0,\partial_k a^0_0] + [a^0_0,a_1^0] - [a^k_0,f^{k0}_0] - \partial_j(-f^{0j}_0 + a_1^j+[a^0_0,a^j_0]) + \partial_j a^j_1 + J_0(\psi)(0) \\
& = -[a^k_0,\partial_k a^0_0] + [a^0_0,a_1^0] - [a^k_0,f^{k0}_0] + \partial_j f^{0j}_0 - \partial_j[a^0_0,a^j_0] + J_0(\psi)(0) \, .
\end{align*}
By (\ref{Const}) we obtain
$$ \partial_j f^{0j}_0 = -\partial_j f_0^{j0} = [a_0^k,f^{k0}_0] - J_0(\psi)(0) \, , $$
so that we arrive at
$$ (\partial_t u)(0) =  -[a^k_0,\partial_k a^0_0] + [a^0_0,a_1^0] - [\partial_j a^0_0,a^j_0] -[a^0_0,\partial_j a^j_0] = 0 \, ,
$$ where we used (\ref{Const}) in the last line.

Next we obtain by (\ref{f1}) and (\ref{f2}) :
$$ V_{\mu \nu}(0) = f^{\mu \nu}_0 - (\partial_{\mu} A_{\nu}-\partial_{\nu} A_{\mu} + [A_{\mu},A_{\nu}])(0) = 0 $$
and
\begin{align*}
&(\partial_t V_{ij})(0) = (\partial_t F_{ij})(0) - (\partial_t(\partial_i A_j - \partial_j A_i + [A_i,A_j]))(0) \\
& \quad = \partial_i a^j_1 - \partial_j a^i_1 + [a^i_1,a^j_0] + [a^i_0,a^j_1] - ( \partial_i a^j_1 - \partial_j a^i_1 + [a^i_1,a^j_0] + [a^i_0,a^j_1]) = 0 \, .
\end{align*}
Moreover
\begin{align*}
(\partial_t V_{0i})(0) & = (\partial_t F^{0i})(0) -(\partial_t(\partial_t A_i - \partial_i A_0 +[A_0,A_i])(0)) \\
& = f^{0i}_1 - (\partial_t^2 A_i)(0) +(\partial_t \partial_i A_0)(0) -[a^0_1,a^i_0]-[a^0_i,a^i_1] \\
& = f^{0i}_1 - [A^{\mu},\partial^{\mu}A_i](0) - [A^{\mu},F_{\mu i}](0) + J_i(\psi)(0) - \partial_j(\partial^j a^i_0) \\
& \quad + \partial_i a^0_1 -[a^0_1,a^i_0] -[a^0_0,a^i_1] \, ,
\end{align*}
where we used $-\partial^2_t A_i = \square A_i - \Delta A_i$ and (\ref{0.4}). Using (\ref{f4}) we obtain
\begin{align*}
(\partial_t V_{0i})(0) &  = \partial_k f^{ki}_0 - [a^0_0,f^{ki}_0] + [a^k_0,f^{ki}_0] - J_i(\psi)(0) + [a^0_0,a^i_1] -[a^k_0,\partial_k a^i_0] \\& \,\,\, + [a^0_0,f^{0i}_0] - [a^k_0,f^{ki}_0] + J_i(\psi)(0) - \partial^j(\partial_j a^i_0) + \partial_ia^0_1 -[a^0_1,a^i_0] -[a^0_0,a^i_1] \, .
\end{align*}
Because by (\ref{f1}) 
\begin{align*}
\partial_k f^{ki}_0 & = -\partial_k f^{ik}_0 = -\partial_k(\partial_i a^k_0 - \partial^k a^i_0) - \partial_k[a^i_0,a^k_0] \\& = -\partial_i a^0_1 + \partial^k \partial_k a^i_0 - [\partial_k a^i_0,a^k_0] - [a^i_0,a^0_1]
\end{align*}
we end up with
$$ (\partial_t V_{0i})(0) = 0 \,. $$

\end{document}